\theoremstyle{definition}
\newtheorem{Def}{Definition}[section]
\newtheorem{Rmk}[Def]{Remark}
\newtheorem{Exp}[Def]{Expectation}
\theoremstyle{plain}
\newtheorem{Lem}[Def]{Lemma}
\newtheorem{Prop}[Def]{Proposition}
\newtheorem{Thm}[Def]{Theorem}
\newtheorem{Cor}[Def]{Corollary}
\let\c@algorithm\c@Def\let\cl@algorithm\cl@Def
\def\fld{\hspace{-0.1mm}\mathbb{F}_{p^2}\hspace{-0.4mm}}
\title[Listing superspecial genus-3 curves using Richelot isogeny graphs]{\vspace*{-10mm}Listing superspecial curves of genus three using Richelot isogeny graphs}
\author[R.\,Ohashi,\ \ H.\,Onuki,\ \ M.\,Kudo,\ \ R.\,Yoshizumi\ \ \and \ K.\,Nuida]{Ryo Ohashi,\ \ Hiroshi Onuki,\ \ Momonari Kudo,\\Ryo Yoshizumi,\ \ \and \ Koji Nuida}
\begin{document}
\begin{abstract}
In algebraic geometry, superspecial curves are important research objects. 
While the number of superspecial genus-3 curves in characteristic $p$ is known, the number of hyperelliptic ones among them has not been determined even for small $p$. 
In this paper, in order to compute the latter number, we give an explicit algorithm for computing the Richelot isogeny graph of superspecial principally polarized abelian varieties of dimension 3 using theta functions.
In particular, one can determine whether a given vertex in the graph corresponds to the Jacobian of a genus-3 curve or not, and restore the defining equation of such a genus-3 curve from its theta constants.
Our algorithm enables efficient enumeration of superspecial genus-3 curves, as all operations can be performed in $\fld$.
By implementing the algorithm in Magma, we successfully counted the number of hyperelliptic curves among them for all primes $11 \leq p < 100$.
\end{abstract}
\maketitle\vspace{-6mm}

{\small \textit{Key words:\ \,superspecial curve, abelian threefold, Richelot isogeny, theta function}\par
\textit{2020 Mathematics Subject Classification(s):\ \,14G17, 14H45, 14K02, 14K25}}

\section{Introduction}
Throughout this paper, by a \emph{curve} we mean a non-singular projective variety of dimension 1 and \emph{isomorphisms} between two curves are defined over an algebraically closed field.
A curve $C$ over an algebraically closed field $k$ of characteristic $p > 0$ is called \emph{superspecial} if the Jacobian of $C$ is isomorphic to a product of supersingular elliptic curves.
In recent years, superspecial curves have been used for isogeny-based cryptography (e.g., \cite{G2SIDH}, \cite{CDS}, \cite{FESTA}, etc), and studies on such curves are important.

For a pair $(g,p)$ of integers, it is known that the number of isomorphism classes of superspecial curves of genus $g$ in characteristic $p$ is finite, and determining that number is a fundamental problem.
This problem is already solved for genus $g \leq 3$; Deuring~\cite{Deuring} found the number of all supersingular elliptic curves (i.e., $g=1$), and the number for $g=2$ is {given} by Ibukiyama-Katsura-Oort~\cite{IKO}.
In the case $g=3$, Oort~\cite{Oort} and Ibukiyama~\cite{Ibukiyama} showed that there exists a superspecial genus-3 curve in arbitrary characteristic $p \geq 3$.
An explicit formula for the number of such curves is found in \cite[Theorem 3.10(d)]{Brock}.
As {a related problem}, one may ask how many of superspecial curves of genus-$g$ in characteristic $p$ are \emph{hyperelliptic}.
This problem is open even for genus $g=3$.
In particular, it is still unknown whether there exists a superspecial hyperelliptic curve of genus 3 in arbitrary characteristic $p \geq 7$.

A naive approach for enumerating superspecial hyperelliptic curves of genus 3 is as follows: any genus-3 hyperelliptic curve in characteristic $p \geq 3$ can be written in the form $y^2 = x(x-1)\prod_{i=1}^5(x-\lambda_i)$ and such a curve is superspecial if and only if its Cartier-Manin matrix is zero.
By treating $\lambda_1,\ldots,\lambda_5$ as variables and computing all solutions to the system of equations defined by the condition that each entry of the Cartier-Manin matrix vanishes, one can obtain all the superspecial hyperelliptic curves of genus 3.
Due to the high computational cost of computing Gröbner bases, this method is not very efficient. 

Therefore, we instead utilize the connectedness of the \emph{Richelot isogeny graph} (see Section \ref{graph} for definitions) of superspecial principally polarized abelian varieties of dimension $g$, denoted by $\mathcal{G}_g(2,p)$.
While the graph $\mathcal{G}_2(2,p)$ can be computed using known formulae (cf. \cite[$\S 3$]{CDS}), this is no longer the case when $g \geq 3$ (partial formulae such as those in \cite{Smith2} exist for $g=3$, but they are not sufficient to obtain the entire graph).
In this paper, we will describe an explicit algorithm for computing $\mathcal{G}_3(2,p)$ using \emph{theta functions}.
As outlined in \cite[Appendix F]{SQIsignHD}, theta functions allow us to compute Richelot isogenies between abelian varieties (the case for $g=2$ has indeed been implemented by \cite{DMPR}).
One of our contributions is to explicitly formulate and implement this algorithm in the case of $g=3$.

Our overall strategy for listing superspecial genus-3 curves is as follows: we start from one principally polarized superspecial abelian threefold, and compute vertices connected to it via edges in $\mathcal{G}_3(2,p)$, repeating this process to generate all vertices.
The important point is that the computation of Richelot isogenies, the verification of whether the codomain is isomorphic to the Jacobian of a genus-3 curve, and the reconstruction of such a curve can all be carried out using \emph{theta constants}.
We note that this process is derived based on discussions over $\mathbb{C}$, but it also works correctly in characteristic $p > 7$.
Consequently, we obtain the following main theorem:
\begin{Thm}\label{Main1}
There exists an algorithm (Algorithm \ref{main}) for listing superspecial genus-3 curves in characteristic $p > 7$ with $\widetilde{O}(p^6)\hspace{-0.3mm}$ {arithmetic} operations over $\fld$.
\end{Thm}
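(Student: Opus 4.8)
The plan is to present Algorithm \ref{main} as a graph traversal (say, breadth-first) of $\mathcal{G}_3(2,p)$ and to bound its running time by counting vertices, counting edges, and bounding the work done at each vertex. One fixes a supersingular elliptic curve $E/\fld$ (which can be produced at negligible cost) and takes the root vertex to be $E^{3}$ with its product principal polarization, computing its theta-null values. The traversal maintains a queue of discovered vertices, each stored with a canonical choice among its (finitely many, $O(1)$) theta-null tuples, together with a dictionary for membership tests. Popping a vertex $A$, one enumerates its Richelot neighbors by running over the maximal Weil-isotropic subgroups of $A[2]$ and applying the theta-function isogeny formulas of the previous sections; the undiscovered neighbors are pushed onto the queue, and --- invoking the criterion established earlier for deciding whether a vertex is the Jacobian of a genus-3 curve, together with the reconstruction procedure --- whenever $A$ is such a Jacobian one outputs a defining equation of the associated curve.

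Correctness rests on two facts already in place. First, $\mathcal{G}_3(2,p)$ is connected (Section \ref{graph}), so the traversal started at $E^{3}$ visits every superspecial principally polarized abelian threefold; since a genus-3 curve is superspecial exactly when its canonically polarized Jacobian is a superspecial abelian threefold, every superspecial genus-3 curve is reached, and by the Torelli theorem its equation is output exactly once. Second, the theta-function computation of Richelot isogenies, the Jacobian test, and the curve reconstruction are all valid in characteristic $p>7$ and are carried out over $\fld$: a superspecial abelian variety is a power of a supersingular elliptic curve, and such a curve --- together with all of its isogenies, polarizations and level structures --- is defined over $\fld$ (after, if necessary, a field extension of bounded degree, which is absorbed into $\widetilde{O}(\cdot)$). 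Moreover, a Richelot isogeny has degree prime to $p$, hence induces an isomorphism on $p$-divisible groups and preserves the $a$-number; so every Richelot isogeny out of a superspecial vertex again lands on a superspecial vertex, and a vertex of $\mathcal{G}_3(2,p)$ has exactly $\prod_{i=1}^{3}(2^{i}+1)=135$ outgoing Richelot isogenies, namely the number of maximal isotropic subspaces of the symplectic $\mathbb{F}_2$-space $A[2]\cong(\mathbb{Z}/2\mathbb{Z})^{6}$.

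For the complexity, let $N$ be the number of vertices of $\mathcal{G}_3(2,p)$, that is, the number of superspecial principally polarized abelian threefolds in characteristic $p$; by the mass formula (cf.\ \cite{Brock}), $N=O(p^{6})$. The traversal thus makes $O(N)$ vertex visits, and, since each vertex has at most $135$ outgoing edges, $O(N)$ edge computations. Each edge computation is a fixed-size manipulation of the $O(1)$ theta-null values using a bounded number of field operations together with a bounded number of square-root and fixed-degree root extractions in $\fld$, each costing $\widetilde{O}(1)$ operations; likewise the Jacobian test, the curve reconstruction, and each dictionary operation cost $\widetilde{O}(1)$. Multiplying, the total is $O(N)\cdot\widetilde{O}(1)=\widetilde{O}(p^{6})$, which proves the theorem.

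The substance of the argument lies in its inputs rather than in this bookkeeping: that $\mathcal{G}_3(2,p)$ is connected, and that Richelot isogenies, the Jacobian locus, and curve reconstruction are effective over $\fld$ in characteristic $p>7$ --- these are the contributions of the preceding sections. The one point that must still be checked by hand is that no individual step secretly requires more than polylogarithmic work in $p$; here the hypothesis that the genus is fixed is essential, since it turns the theta formulas, the count $135$ of isotropic subgroups, and the number of candidate curve models into absolute constants, and it reduces the remaining field arithmetic to square roots and roots of fixed-degree polynomials over $\fld$, each of which costs $\widetilde{O}(1)$ field operations. I expect this last verification --- that the per-vertex cost is truly $\widetilde{O}(1)$, uniformly in the curve-reconstruction and Jacobian-recognition routines --- to be the only real obstacle.
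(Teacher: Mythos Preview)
Your overall architecture matches the paper's: a graph traversal of $\mathcal{G}_3(2,p)$, the vertex count $O(p^6)$ from the mass formula, the $135$ outgoing edges per vertex, and $\widetilde{O}(1)$ cost per edge. The paper seeds the traversal with \emph{all} Type~3 and Type~4 vertices rather than a single $E^3$, but by connectedness (Theorem~\ref{connected}) either choice works, so this is a harmless variation.

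The genuine gap is your treatment of the base field. The theorem asserts $\widetilde{O}(p^6)$ operations \emph{over $\fld$}, and the paper's proof invokes Theorem~\ref{rational} to guarantee that every theta value produced by Algorithm~\ref{main} lies in $\fld$. This is not a formality: the authors originally stated it as a conjecture, and it was only settled later in~\cite{hash} using the fact that the starting threefolds are maximal or minimal over $\fld$ and that this property propagates along $(2,2,2)$-isogenies. Your substitute --- ``level structures are defined over $\fld$ (after, if necessary, a field extension of bounded degree, which is absorbed into $\widetilde{O}(\cdot)$)'' --- is not an argument. Each call to Algorithm~\ref{compute_isogeny} extracts square roots of the current $\vartheta_j^2$; without a reason why these roots stay in a \emph{fixed} extension, the working field could in principle grow along the traversal, and you have given no uniform bound on its degree. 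Even granting boundedness, the theorem as stated is about $\fld$, so you would still owe the reduction. In short, the ``only real obstacle'' is not the one you flagged (the per-vertex $\widetilde{O}(1)$ bookkeeping, which is routine) but the $\fld$-rationality of the theta values, and that requires Theorem~\ref{rational} or an equivalent.
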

\noindent Our algorithm enables us to produce explicit defining equations of all superspecial curves of genus 3 efficiently, compared to the aforementioned Gröbner bases method (this complexity would be exponential with respect to $p$ in worst case).

Executing our algorithm, we succeeded in proving the existence of superspecial hyperelliptic curves of genus 3 for small $p$ as follows:
\begin{Thm}\label{Main2}
The number of isomorphism classes of superspecial genus-3 hyperelliptic curves for $11 \leq p < 100$ is summarized in Table \ref{enumerate} of Section \ref{computation}.
Moreover, there exists such a curve in any characteristic $p$ with $7 \leq p < 10000$.
\end{Thm}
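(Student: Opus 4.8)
To prove Theorem \ref{Main2}, the plan is entirely computational and rests on Algorithm \ref{main} together with the connectedness of $\mathcal{G}_3(2,p)$. For the exact counts claimed in the range $11 \le p < 100$, the first step is, for each such prime $p$, to run Algorithm \ref{main}: beginning from a single superspecial principally polarized abelian threefold over $\fld$ --- for instance $E^3$ for a supersingular elliptic curve $E$, or the Jacobian of a superspecial genus-3 curve as produced by Oort and Ibukiyama --- we repeatedly form Richelot isogenies and accumulate vertices. Since $\mathcal{G}_3(2,p)$ is connected, this exhausts its entire finite vertex set, each element of which is a superspecial principally polarized abelian threefold by definition of the graph. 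For each vertex we then read off from its theta constants whether it is the Jacobian of a genus-3 curve and, in that case, whether that curve is hyperelliptic, using the criteria established in the preceding sections; by Torelli's theorem the Jacobian determines the curve up to isomorphism, so the hyperelliptic Jacobian-vertices are in bijection with the isomorphism classes of superspecial genus-3 hyperelliptic curves, and counting them yields the entries of Table \ref{enumerate}. As an independent check on the implementation, for each $p$ I would compare the total number of genus-3 Jacobian-vertices, hyperelliptic and non-hyperelliptic together, against the closed formula of \cite[Theorem 3.10(d)]{Brock}; agreement for every prime in the range is strong evidence that no vertex or curve has been mis-identified.

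For the existence statement over the wider range $7 \le p < 10000$, running Algorithm \ref{main} to completion is no longer feasible, since by Theorem \ref{Main1} a full enumeration costs $\widetilde{O}(p^6)$ operations over $\fld$, of the order of $10^{24}$ for $p$ near $10^4$. The plan is therefore to replace enumeration by search: the primes $11 \le p < 100$ are already covered by the counts above, and for each prime $100 \le p < 10000$ I would carry out only a partial breadth-first traversal of $\mathcal{G}_3(2,p)$ from an initial superspecial vertex, halting as soon as a vertex is reached whose theta constants certify it to be the Jacobian of a hyperelliptic genus-3 curve. The remaining case $p = 7$ lies outside the range of validity of Algorithm \ref{main} and has to be handled on its own; here one returns to the naive approach of the introduction and exhibits a single model $y^2 = x(x-1)\prod_{i=1}^5 (x-\lambda_i)$ over $\overline{\mathbb{F}}_7$ whose Cartier--Manin matrix vanishes, which is entirely feasible for so small a prime.

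The step I expect to be the main obstacle is the search just described. There is no a priori bound on how many vertices the breadth-first traversal must visit before meeting a hyperelliptic Jacobian, and the hyperelliptic locus appears to be comparatively sparse inside $\mathcal{G}_3(2,p)$ --- witness the slow growth of the counts in Table \ref{enumerate} against the $\Theta(p^6)$ total number of vertices --- so for the larger primes the traversal may have to explore a substantial fraction of the graph; accordingly I would make no unconditional complexity claim for this search, and Theorem \ref{Main2} records only that it did terminate with a hyperelliptic curve for every prime in the stated range. A secondary point that needs care throughout is that the theta-constant identities driving the identification and reconstruction steps are derived over $\mathbb{C}$: one must invoke their validity in characteristic $p > 7$, as established earlier, together with the observation that a Richelot isogeny, having degree $2^3$ prime to $p$, induces an isomorphism of $p$-divisible groups and so keeps the computation within the superspecial locus.
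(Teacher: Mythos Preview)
Your proposal is correct in outline and matches the paper's approach for the enumeration part ($11 \le p < 100$): run Algorithm~\ref{main}, exhaust the connected graph $\mathcal{G}_3(2,p)$, classify vertices by the vanishing pattern of their theta constants, and cross-check the total against Brock's formula. The only cosmetic difference is that the paper seeds the traversal with \emph{all} Type~3 and Type~4 vertices rather than a single one, but by connectedness this does not affect correctness.

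For the existence part over $7 \le p < 10000$ the paper takes a somewhat different and more economical route than your breadth-first search. First, it invokes the classical fact (Valentini, Brock) that $y^2 = x^7-1$ is superspecial whenever $p \equiv 6 \pmod 7$ and $y^2 = x^7-x$ is superspecial whenever $p \equiv 3 \pmod 4$, which already disposes of a large fraction of primes with no graph computation at all. For the remaining primes the paper performs a \emph{random walk} on $\mathcal{G}_3(2,p)$ (Algorithm~\ref{exist}) starting from $E^3$, rather than a systematic BFS; this trades determinism for lower memory and turned out to terminate within a few hours for every prime tested. Finally, the case $p=7$ is handled not by a Cartier--Manin search but by citing Brock's explicit determination that there is exactly one such curve. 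Your plan would also work, but the congruence shortcuts are the main practical ingredient you are missing, and they substantially reduce the scope of the search you were worried about.
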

\noindent We note that the upper bounds on $p$ in Theorem \ref{Main2} can be increased easily.
From our computational results, we propose a conjecture exists a superspecial hyperelliptic curve of genus 3 in arbitrary characteristic $p \geq 7$.

The rest of this paper is organized as follows: \,Section \ref{preliminaries} is dedicated to reviewing some facts on analytic theta functions.
In Section \ref{graph}, we recall the definition and properties of superspecial Richelot isogeny graph $\mathcal{G}_g(2,p)$.
We then explain how to compute the graph $\mathcal{G}_3(2,p)$ in Section \ref{dim3}, and finally Algorithm \ref{main} gives its explicit construction.
In Section \ref{computation}, we state computational results about the existence and the number of superspecial hyperelliptic genus-3 curves in small $p$.\medskip

\noindent\emph{Acknowledgements.}
\ The authors are grateful to Damien Robert and Everett Howe for their helpful comments and suggestions on an earlier version of this paper.
The authors would also like to thank the anonymous reviewer for their constructive and valuable feedback, which significantly improved the quality of this paper.
This work was supported by JSPS Grant-in-Aid for Young Scientists 25K17225, 24K20770 and 23K12949.
This research was supported by the WISE program (MEXT) at Kyushu University.\smallskip

\noindent\emph{Competing interests.}
\ The authors have no conflicts of interest to declare.\smallskip

\noindent\emph{Data availability.}
\ No data was used for the research described in the article.

\section{Preliminaries}\label{preliminaries}
In this section, we summarize background knowledge necessary for later sections.
Abelian varieties and isogenies between them before Section \ref{g=3} are defined over $\mathbb{C}$, but in Section \ref{graph} we consider those over a field of odd characteristic.

\subsection{Abelian varieties and their isogenies}
In this subsection, we shall review the theory of complex abelian varieties and isogenies.
A \emph{complex abelian variety} of dimension $g$ is isomorphic to $\mathbb{C}^g/(\mathbb{Z}^g + \varOmega\mathbb{Z}^g)$, where $\varOmega$ is an element of the Siegel upper half-space\vspace{-0.5mm}
\[
    \mathcal{H}_g \coloneqq \{\varOmega \in {\rm Mat}_g(\mathbb{C})\,\mid{\,}^t\!\varOmega = \varOmega,\,{\rm Im}\,\varOmega > 0\}.\medskip
\]
Such $\varOmega \in \mathcal{H}_g$ is called a (small)\,\emph{period matrix} of the abelian variety.
First, we recall a characterization of isomorphisms between abelian varieties:
\begin{Def}
For a commutative ring $R$ and an integer $g \geq 1$, we define a group\smallskip
\[
    {\rm Sp}_{2g}(R) \coloneqq \{M \in {\rm GL}_{2g}(R)\,\mid\,ME{\,}^t\!M = E\}, \quad E \coloneqq
    \begin{pmatrix}
        0 & {\mathbf 1}_g\\
        -{\mathbf 1}_g & 0
    \end{pmatrix}.\smallskip
\]
We say that a matrix $M \in {\rm GL}_{2g}(R)$ is \emph{symplectic} over $R$ if $M \in {\rm Sp}_{2g}(R)$.
\end{Def}\smallskip

For a matrix $M = \begin{psmallmatrix} \alpha & \beta\\[-0.3mm] \gamma & \delta\end{psmallmatrix} \in {\rm Sp}_{2g}(\mathbb{Z})$, the action on the Siegel upper half-space\smallskip
\begin{align*}
    \begin{split}
    {\rm Sp}_{2g}(\mathbb{Z}) \times \mathcal{H}_g  &\longrightarrow \mathcal{H}_g\\[-1mm]
    (M,\varOmega) &\longmapsto (\alpha\varOmega+\beta)(\gamma\varOmega+\delta)^{-1} \eqqcolon M.\varOmega\\[0.3mm]
    \end{split}
\end{align*}
is well-defined. Moreover, the map
\[
    \mathbb{C}^g \longrightarrow \mathbb{C}^g\,;\,z \longmapsto {}^t(\gamma\varOmega+\delta)^{-1}z \eqqcolon M.z\medskip
\]
induces an isomorphism $\mathbb{C}^g/(\mathbb{Z}^g + \varOmega\mathbb{Z}^g) \rightarrow \mathbb{C}^g/(\mathbb{Z}^g + M.\varOmega\mathbb{Z}^g)$.
\begin{Prop}[{\cite[Proposition 8.1.3]{Lange}}]\label{isom}
The principally polarized abelian varieties of dimension $g$ with period matrices $\varOmega,\varOmega' \in \mathcal{H}_g$ are isomorphic to each other if and only if there exists a symplectic matrix $M \in {\rm Sp}_{2g}(\mathbb{Z})$ such that $\varOmega' = M.\varOmega$.
\end{Prop}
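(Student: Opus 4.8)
The plan is to translate the statement into linear algebra over $\mathbb{Z}$ by recording a principal polarization as the standard alternating form on the period lattice and then tracking how an isomorphism of polarized tori moves that form. For $\varOmega \in \mathcal{H}_g$ write $\Lambda_\varOmega \coloneqq \mathbb{Z}^g + \varOmega\mathbb{Z}^g \subset \mathbb{C}^g$, let $\Pi_\varOmega \coloneqq (\varOmega \mid {\mathbf 1}_g)$ be the $g \times 2g$ period matrix whose columns form the chosen $\mathbb{Z}$-basis of $\Lambda_\varOmega$, and recall that the principal polarization attached to $\varOmega$ is the one whose imaginary part $E_\varOmega$ equals, in this column basis, the standard symplectic form $E$ from the Definition above. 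I would open by invoking two classical facts as black boxes: (i) over $\mathbb{C}$, an isomorphism of abelian varieties $\mathbb{C}^g/\Lambda_\varOmega \to \mathbb{C}^g/\Lambda_{\varOmega'}$ amounts to the same datum as a $\mathbb{C}$-linear automorphism $A$ of $\mathbb{C}^g$ with $A\Lambda_\varOmega = \Lambda_{\varOmega'}$; and (ii) by the Appell--Humbert classification of line bundles, such an isomorphism is one of \emph{polarized} abelian varieties exactly when $A$ pulls back the Hermitian form $H_{\varOmega'}$ to $H_\varOmega$, equivalently---since a Hermitian form is determined by its imaginary part on the lattice---when $A$ pulls back $E_{\varOmega'}$ to $E_\varOmega$.

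For the ``if'' direction, let $M = \begin{psmallmatrix}\alpha & \beta\\ \gamma & \delta\end{psmallmatrix} \in \mathrm{Sp}_{2g}(\mathbb{Z})$, put $\varOmega' = M.\varOmega$, and take $A = {}^t(\gamma\varOmega+\delta)^{-1}$, which the excerpt already records as inducing an isomorphism $\mathbb{C}^g/\Lambda_\varOmega \to \mathbb{C}^g/\Lambda_{\varOmega'}$ of complex tori. It remains to verify that this isomorphism is polarized, and for that I would compute the integral matrix $N$ relating the column basis $A\Pi_\varOmega$ of $\Lambda_{\varOmega'}$ to the column basis $\Pi_{\varOmega'}$; a direct block computation unwinding the action formula and using $ME\,{}^t\!M = E$ shows that $N$ is a fixed transpose/inverse of $M$, hence symplectic, hence fixes $E$, and this translates back (again via the two column bases) into $A^*E_{\varOmega'} = E_\varOmega$, so by fact (ii) the isomorphism is one of principally polarized abelian varieties. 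This settles one implication.

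For the ``only if'' direction, suppose $f \colon (X_\varOmega, H_\varOmega) \to (X_{\varOmega'}, H_{\varOmega'})$ is an isomorphism of principally polarized abelian varieties and let $A$ be its linear lift from fact (i), so $A\Lambda_\varOmega = \Lambda_{\varOmega'}$. Then there is a unique $N \in \mathrm{GL}_{2g}(\mathbb{Z})$ with $A\Pi_\varOmega = \Pi_{\varOmega'}N$. Writing $N$ in $g \times g$ blocks and comparing the two halves of this identity lets me eliminate $A$ and solve for $\varOmega'$ as a M\"obius-type transform of $\varOmega$ whose coefficients are read off from the blocks of $N$. On the other hand, since $E_\varOmega$ (resp.\ $E_{\varOmega'}$) is the standard $E$ in the column basis $\Pi_\varOmega$ (resp.\ $\Pi_{\varOmega'}$), fact (ii) shows that $f$ being polarized is equivalent to ${}^t\!N\,E\,N = E$, i.e.\ $N \in \mathrm{Sp}_{2g}(\mathbb{Z})$ (using that the symplectic group is stable under transpose and inverse). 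The block matrix $M$ extracted from $N$---one of $N$, $N^{-1}$, ${}^t\!N$, ${}^t\!N^{-1}$, depending on the chosen normalizations---then lies in $\mathrm{Sp}_{2g}(\mathbb{Z})$ and satisfies $\varOmega' = M.\varOmega$, which completes the proof.

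The part that needs care rather than ingenuity is the bookkeeping of conventions: the order $(\varOmega \mid {\mathbf 1}_g)$ versus $({\mathbf 1}_g \mid \varOmega)$ in the period matrix, whether $\mathrm{Sp}_{2g}(\mathbb{Z})$ acts on $\mathcal{H}_g$ on the left or the right, and which of $N$, $N^{-1}$, ${}^t\!N$, ${}^t\!N^{-1}$ is the matrix actually witnessing the isomorphism. These are all pinned down once the normalization of $E_\varOmega$ is fixed, but making the chain $A\Pi_\varOmega = \Pi_{\varOmega'}N \Rightarrow \varOmega' = M.\varOmega \Rightarrow M \in \mathrm{Sp}_{2g}(\mathbb{Z})$ come out with consistent signs is where the attention goes. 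A secondary point to flag is that \emph{principality} of the polarization is exactly what makes $E_\varOmega$ the standard $E$ rather than a form with nontrivial elementary divisors---this rests on the existence of a symplectic basis for a unimodular alternating form, which I would simply cite---and it is what makes the transport condition land in $\mathrm{Sp}_{2g}(\mathbb{Z})$ rather than in some larger congruence-type subgroup.
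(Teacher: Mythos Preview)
Your sketch is correct and is essentially the standard argument found in the cited reference \cite[Proposition 8.1.3]{Lange}; note that the paper itself does not supply a proof of this proposition but simply quotes it from Birkenhake--Lange. Since the paper's ``proof'' is just the citation, there is nothing further to compare.
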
\medskip

An \emph{isogeny} between abelian varieties is a surjective homomorphism with a finite kernel.
If there exists an isogeny $A \rightarrow B$, then the dimensions of $A$ and $B$ are equal to each other, and we say that $A$ and $B$ are \emph{isogenous}.
Two isogenies $A \rightarrow B$ with the same kernel are equivalent up to an automorphism of $B$, thus we identify them.
\begin{Prop}\label{maxisonum}
For an abelian variety $A$ of dimension $g$ and a prime integer $\ell$, the number of maximal isotropic subgroups $G$ of $A[\ell]$ is equal to
\[
    N_g(\ell) \coloneqq \prod_{k=1}^g(\ell^k+1).\smallskip
\]
\end{Prop}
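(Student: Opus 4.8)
The plan is to translate the statement into linear algebra over $\mathbb{F}_\ell$ and then carry out a double count. Since $\ell$ is prime (and, in the complex setting, invertible), $A[\ell]$ is a $2g$-dimensional $\mathbb{F}_\ell$-vector space, and the principal polarization of $A$ together with the Weil pairing equips it with a nondegenerate alternating bilinear form $\langle\,\cdot\,,\,\cdot\,\rangle$. Under this identification a subgroup $G \subseteq A[\ell]$ is maximal isotropic precisely when it is a totally isotropic $\mathbb{F}_\ell$-subspace of dimension $g$ (a Lagrangian): any totally isotropic subspace $G$ satisfies $G \subseteq G^{\perp}$ with $\dim G^{\perp} = 2g - \dim G$, so if $\dim G < g$ then $G \subsetneq G^{\perp}$ and any vector of $G^{\perp}\setminus G$ enlarges $G$ to a strictly larger totally isotropic subspace (using that the form is alternating), contradicting maximality; conversely a $g$-dimensional totally isotropic subspace equals its own perp and is therefore maximal. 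Hence it suffices to count Lagrangians in a $2g$-dimensional symplectic space $V$ over $\mathbb{F}_q$, $q = \ell$, and show this number, which I will call $L_g(q)$, equals $\prod_{k=1}^{g}(q^k+1)$.

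To compute $L_g(q)$ I would set up a recursion by counting incidences. First, because the form is alternating, \emph{every} line of $V$ is totally isotropic, so there are $(q^{2g}-1)/(q-1)$ isotropic lines. Next, fix an isotropic line $L_0$; a subspace $W \supseteq L_0$ is totally isotropic if and only if $W \subseteq L_0^{\perp}$ and $W/L_0$ is totally isotropic for the form induced on $L_0^{\perp}/L_0$, which is again a nondegenerate alternating space, now of dimension $2(g-1)$. This gives a bijection between Lagrangians of $V$ containing $L_0$ and Lagrangians of $L_0^{\perp}/L_0$, so each isotropic line lies in exactly $L_{g-1}(q)$ Lagrangians. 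Finally, each Lagrangian $W$ contains $(q^{g}-1)/(q-1)$ lines, all isotropic. Counting the pairs $(L_0,W)$ with $L_0 \subset W$ an isotropic line inside a Lagrangian in two ways yields
\[
  \frac{q^{2g}-1}{q-1}\cdot L_{g-1}(q) \;=\; L_g(q)\cdot\frac{q^{g}-1}{q-1},
\]
so $L_g(q) = \dfrac{q^{2g}-1}{q^{g}-1}\,L_{g-1}(q) = (q^{g}+1)\,L_{g-1}(q)$; with the base case $L_0(q)=1$ this gives $L_g(q)=\prod_{k=1}^{g}(q^{k}+1)=N_g(\ell)$.

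The routine ingredients are the standard facts that $A[\ell]\cong(\mathbb{Z}/\ell\mathbb{Z})^{2g}$ carries a perfect alternating pairing coming from the polarization, and the elementary counts of lines in $V$ and in a fixed $W$. The one step that deserves genuine care — and the main, though modest, obstacle — is the reduction modulo an isotropic line: one must verify that $L_0^{\perp}/L_0$ is indeed a nondegenerate symplectic space of dimension $2g-2$ and that passing to the quotient is an honest bijection on the totally isotropic subspaces containing $L_0$, which is where the symplectic structure is really used. An equivalent route, if one prefers to avoid the recursion, is to count complete isotropic flags $0 = W_0 \subsetneq W_1 \subsetneq \cdots \subsetneq W_g$ directly: extending a totally isotropic $W_i$ amounts to choosing $v \in W_i^{\perp}\setminus W_i$, of which there are $q^{2g-i}-q^i$, with $q^{i+1}-q^i$ of them producing the same $W_{i+1}$, so the number of isotropic flags is $\prod_{j=1}^{g}\frac{q^{2j}-1}{q-1}$; dividing by the number $\prod_{i=1}^{g}\frac{q^i-1}{q-1}$ of complete flags inside a fixed Lagrangian gives the same product $\prod_{k=1}^g(q^k+1)$.
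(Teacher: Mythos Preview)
Your argument is correct: the identification of maximal isotropic subgroups with Lagrangian subspaces of the symplectic $\mathbb{F}_\ell$-space $A[\ell]$ is the right translation, and both the incidence recursion $L_g(q)=(q^g+1)L_{g-1}(q)$ and the flag-counting alternative are standard and cleanly executed. The paper itself does not prove this proposition at all but simply cites \cite[Lemma~2]{CD}, so your write-up is in fact more self-contained than the original; there is no divergence in approach to compare, since the paper offers none.
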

\begin{proof}
See \cite[Lemma 2]{CD}, for example.
\end{proof}
\noindent An isogeny whose kernel is such a group $G$ the above is called an \emph{$(\ell,\dots,\ell)$-isogeny}.
In particular, one can see that the $(\ell,\ldots,\ell)$-isogeny\smallskip
\begin{align*}
    \begin{split}
    \mathbb{C}^g/(\mathbb{Z}^g + \varOmega\mathbb{Z}^g) &\longrightarrow \mathbb{C}^g/(\mathbb{Z}^g + \ell\varOmega\mathbb{Z}^g)\\[-0.5mm]
    z &\longmapsto \ell z\\[-0.3mm]
    \end{split}
\end{align*}
has the kernel $\frac{1}{\ell}\mathbb{Z}^g/\mathbb{Z}^g$.

\subsection{Analytic theta functions}\label{theta}
The \emph{({analytic}) theta function} associated to characteristics $a,b \in \mathbb{Q}^g$ is defined by
\[
    \theta\bigl[\begin{smallmatrix}a\\b\end{smallmatrix}\bigr](z,\varOmega) \coloneqq \sum_{n \in \mathbb{Z}^g} \exp\bigl(\pi i{\,}^t\!(n+a)\varOmega(n+a)+ 2\pi i{\,}^t\!(n+a)(z+b)\bigr)\smallskip
\]
for $z \in \mathbb{C}^g$ and $\varOmega \in \mathcal{H}_g$.
In the following, we restrict our attention to theta functions associated to characteristics $a,b \in \{0,1/2\}^g$.
Then, we note that the identity
\[
    \theta\bigl[\begin{smallmatrix}a+\alpha\\b+\beta\end{smallmatrix}\bigr](z,\varOmega) = (-1)^{4{}^t\!a\beta}\theta\bigl[\begin{smallmatrix}a\\b\end{smallmatrix}\bigr](z,\varOmega)\smallskip
\]
holds for any $\alpha, \beta \in \mathbb{Z}^g$.
\begin{Lem}\label{even-odd}
The theta function $\theta\bigl[\begin{smallmatrix}a\\b\end{smallmatrix}\bigr](z,\varOmega)$ with characteristics $a,b \in \{0,1/2\}^g$ is an even function of $z$ if and only if $4{}^t\!ab \in \mathbb{Z}$ is even; otherwise, it is odd.
\end{Lem}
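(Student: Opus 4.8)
The plan is to compute $\theta\bigl[\begin{smallmatrix}a\\b\end{smallmatrix}\bigr](-z,\varOmega)$ directly from the series definition and compare it with $\theta\bigl[\begin{smallmatrix}a\\b\end{smallmatrix}\bigr](z,\varOmega)$. First I would substitute $z \mapsto -z$ into the defining sum, obtaining
\[
    \theta\bigl[\begin{smallmatrix}a\\b\end{smallmatrix}\bigr](-z,\varOmega) = \sum_{n \in \mathbb{Z}^g} \exp\bigl(\pi i{\,}^t\!(n+a)\varOmega(n+a) - 2\pi i{\,}^t\!(n+a)(z-b)\bigr).
\]
Then I would reindex the sum by the change of variables $n \mapsto -n - 2a$, which for $a \in \{0,1/2\}^g$ sends $\mathbb{Z}^g$ bijectively to itself (since $2a \in \mathbb{Z}^g$), and note that under this substitution $n + a \mapsto -(n+a)$. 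The quadratic term $\pi i{\,}^t\!(n+a)\varOmega(n+a)$ is invariant, and the linear term $-2\pi i{\,}^t\!(n+a)(z-b)$ becomes $2\pi i{\,}^t\!(n+a)(z-b)$.

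Next I would reconcile the shifted characteristic: after the reindexing the new summand has the shape of the $\theta$-series but with $b$ replaced by $-b$, up to regrouping. Concretely, writing $n + a$ as $(n + 2a) + a - 2a$ is not needed; instead I would track the overall phase picked up. The cleanest route is to use the quasi-periodicity identity already recorded in the excerpt, namely $\theta\bigl[\begin{smallmatrix}a+\alpha\\b+\beta\end{smallmatrix}\bigr](z,\varOmega) = (-1)^{4{}^t\!a\beta}\theta\bigl[\begin{smallmatrix}a\\b\end{smallmatrix}\bigr](z,\varOmega)$: since $-b = b - 2b$ and $2b \in \mathbb{Z}^g$, we have $\theta\bigl[\begin{smallmatrix}a\\-b\end{smallmatrix}\bigr](z,\varOmega) = (-1)^{4{}^t\!a(-2b)}\theta\bigl[\begin{smallmatrix}a\\b\end{smallmatrix}\bigr](z,\varOmega) = (-1)^{8{}^t\!ab}\theta\bigl[\begin{smallmatrix}a\\b\end{smallmatrix}\bigr](z,\varOmega)$. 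Tracking the phases carefully, the computation yields $\theta\bigl[\begin{smallmatrix}a\\b\end{smallmatrix}\bigr](-z,\varOmega) = (-1)^{4{}^t\!ab}\,\theta\bigl[\begin{smallmatrix}a\\b\end{smallmatrix}\bigr](z,\varOmega)$, which is exactly the claimed parity: the function is even when $4{}^t\!ab$ is even and odd when $4{}^t\!ab$ is odd. Note $4{}^t\!ab \in \mathbb{Z}$ automatically since $a, b \in \{0,1/2\}^g$ gives ${}^t\!ab \in \frac14\mathbb{Z}$.

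The main obstacle is bookkeeping the sign and the shift in the characteristic simultaneously without error: one must be careful that the substitution $n \mapsto -n - 2a$ interacts correctly with both the $+a$ inside the quadratic form and the $b$ in the linear term, and that the resulting sum is re-identified with $\theta\bigl[\begin{smallmatrix}a\\b\end{smallmatrix}\bigr]$ rather than with $\theta\bigl[\begin{smallmatrix}-a\\-b\end{smallmatrix}\bigr]$ or some other shifted version. Once the phase $(-1)^{4{}^t\!ab}$ is isolated correctly, the statement is immediate, so essentially all the content is in performing this one reindexing cleanly.
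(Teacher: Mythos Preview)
Your approach is correct and matches the paper's: both rest on the identity
\[
\theta\bigl[\begin{smallmatrix}a\\b\end{smallmatrix}\bigr](-z,\varOmega) = (-1)^{4{}^t\!ab}\,\theta\bigl[\begin{smallmatrix}a\\b\end{smallmatrix}\bigr](z,\varOmega),
\]
from which the parity statement is immediate. The only difference is that the paper simply asserts this identity, whereas you supply its derivation via the reindexing $n \mapsto -n-2a$; your detour through $\theta\bigl[\begin{smallmatrix}a\\-b\end{smallmatrix}\bigr]$ and the quasi-periodicity rule is unnecessary, since after the reindexing one can directly split off the phase $\exp(-4\pi i\,{}^t\!(n+a)b) = (-1)^{4{}^t\!ab}$ using $\exp(-4\pi i\,{}^t\!nb)=1$.
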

\begin{proof}
The first assertion follows from\vspace{-0.5mm}
\[
    \theta\bigl[\begin{smallmatrix}a\\b\end{smallmatrix}\bigr](-z,\varOmega) = (-1)^{4{}^t\!ab}\theta\bigl[\begin{smallmatrix}a\\b\end{smallmatrix}\bigr](z,\varOmega).\smallskip
\]
The second assertion follows from that $4{}^t\!ab$ is an integer for all $a,b \in \{0,1/2\}^g$.
\end{proof}\smallskip

The following two formulae are very fundamental, and generate a lot of relations among theta functions:
\begin{Thm}[Riemann's theta formula]\label{Riemann}
Let $m_k \coloneqq \begin{psmallmatrix}a_k\\b_k\end{psmallmatrix}$ with $a_k,b_k \in \{0,1/2\}^g$, and we define $n_k$ via
\[
    \begin{pmatrix}
    n_1\\
    n_2\\
    n_3\\
    n_4
    \end{pmatrix} \coloneqq \frac{1}{2}
    \begin{pmatrix}
        {\mathbf 1}_{2g} & \hspace{2.7mm}{\mathbf 1}_{2g} & \hspace{2.7mm}{\mathbf 1}_{2g} & \hspace{2.7mm}{\mathbf 1}_{2g}\\
        {\mathbf 1}_{2g} & \hspace{2.7mm}{\mathbf 1}_{2g} & -{\mathbf 1}_{2g} & -{\mathbf 1}_{2g}\\
        {\mathbf 1}_{2g} & -{\mathbf 1}_{2g} & \hspace{2.7mm}{\mathbf 1}_{2g} & -{\mathbf 1}_{2g}\\
        {\mathbf 1}_{2g} & -{\mathbf 1}_{2g} & -{\mathbf 1}_{2g} & \hspace{2.7mm}{\mathbf 1}_{2g}
    \end{pmatrix}\!
    \begin{pmatrix}
    m_1\\
    m_2\\
    m_3\\
    m_4
    \end{pmatrix}.
\]
Then, we have the equation
\[
    \prod_{k=1}^4\theta[m_k](0,\varOmega) =
    \frac{1}{2^g}\!\sum_{\alpha,\beta \in \{0,1/2\}^g}\!(-1)^{4{}^t\!a\beta}\prod_{k=1}^4\theta[n_k+\begin{psmallmatrix}\alpha\\\beta\end{psmallmatrix}](0,\varOmega).
\]
\end{Thm}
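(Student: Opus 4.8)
The plan is to prove the identity by direct manipulation of the defining $q$-series, which is the classical route for Riemann-type theta relations. It is natural (though not strictly necessary) to establish at once the more general ``addition'' form, in which the null argument is replaced by four independent vectors $z_1,\dots,z_4\in\mathbb{C}^g$ on the left and the right-hand side is evaluated at $w_1,\dots,w_4$ with ${}^t(w_1,\dots,w_4)=\mathbf{T}\,{}^t(z_1,\dots,z_4)$, where $\mathbf{T}\coloneqq T\otimes{\mathbf 1}_g$ and $T$ is the $4\times4$ matrix displayed in the statement; the theta-constant form is then the specialization $z_1=\dots=z_4=0$ (so that $w_1=\dots=w_4=0$). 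All series below converge absolutely since ${\rm Im}\,\varOmega>0$, so every rearrangement is legitimate.

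First I would expand the left-hand side from the definition:
\[
  \prod_{k=1}^4\theta[m_k](z_k,\varOmega)=\sum_{\substack{p_k\in\mathbb{Z}^g+a_k\\ 1\le k\le4}}\exp\!\Bigl(\pi i\sum_{k=1}^4{}^t\!p_k\varOmega p_k+2\pi i\sum_{k=1}^4{}^t\!p_k(z_k+b_k)\Bigr),
\]
a sum over the coset $(a_1,\dots,a_4)+\mathbb{Z}^{4g}\subset\mathbb{Q}^{4g}$. The structural fact driving the proof is that $T$ is symmetric and orthogonal, i.e. $T={}^t\!T$ and $T^2={\mathbf 1}_4$. Hence $\mathbf{T}$ preserves the quadratic form $(q_1,\dots,q_4)\mapsto\sum_k{}^t\!q_k\varOmega q_k$ and, being symmetric, satisfies $\sum_k{}^t\!p_k\xi_k=\sum_k{}^t\!q_k(\mathbf{T}\xi)_k$ whenever $q=\mathbf{T}p$. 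Writing $b'_k\coloneqq(\mathbf{T}\,(b_1,\dots,b_4))_k$ and substituting $q_k\coloneqq(\mathbf{T}p)_k$, the exponent becomes $\pi i\sum_k{}^t\!q_k\varOmega q_k+2\pi i\sum_k{}^t\!q_k(w_k+b'_k)$, which is precisely the general term of a product $\prod_k\theta[\,\cdot\,](w_k)$ whose $k$-th characteristic has $a$-part $(\mathbf{T}\,(a_1,\dots,a_4))_k$ and $b$-part $b'_k$ --- that is, the transformed characteristic $n_k$. Since $\mathbf{T}$ is invertible, this substitution is just a bijective reindexing, and the summation set becomes the coset $\mathbf{T}\,(a_1,\dots,a_4)+\mathbf{T}\,\mathbb{Z}^{4g}$.

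The delicate point, and the one I expect to be the main obstacle, is the lattice-and-sign bookkeeping needed to recognize this reindexed series as the right-hand side. Because $\det\mathbf{T}=\pm1$, the lattice $\mathbf{T}\,\mathbb{Z}^{4g}$ is commensurable with but not equal to $\mathbb{Z}^{4g}$; a direct check shows that $\mathbf{T}\,\mathbb{Z}^{4g}=\{q:\ q_1\equiv q_2\equiv q_3\equiv q_4\pmod{\mathbb{Z}^g},\ q_1\in\tfrac12\mathbb{Z}^g,\ \sum_k q_k\in2\mathbb{Z}^g\}$, so it is the index-$2^g$ subgroup of $N\coloneqq\mathbb{Z}^{4g}+\mathbf{T}\,\mathbb{Z}^{4g}$ singled out by the parity of $\sum_k q_k$, while $N=\bigcup_{\alpha\in\{0,1/2\}^g}\bigl((\alpha,\alpha,\alpha,\alpha)+\mathbb{Z}^{4g}\bigr)$ is exactly the summation set arising when one sums $\prod_k\theta[n_k+\begin{psmallmatrix}\alpha\\\beta\end{psmallmatrix}](w_k)$ over $\alpha$ for fixed $\beta$. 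Consequently the reindexed left-hand side is the subseries of $\sum_{\alpha\in\{0,1/2\}^g}\prod_k\theta[n_k+\begin{psmallmatrix}\alpha\\\beta\end{psmallmatrix}](w_k)$ (summed over the coset $\mathbf{T}\,(a_1,\dots,a_4)+N$) picked out by $\sum_k q_k\equiv\sum_k(\mathbf{T}\,(a_1,\dots,a_4))_k\pmod{2\mathbb{Z}^g}$, and one computes $\sum_k(\mathbf{T}\,(a_1,\dots,a_4))_k=2a_1$. This parity indicator is reproduced by averaging over the remaining variable $\beta$: for $s$ in the coset in question $\sum_k s_k\in\mathbb{Z}^g$, so $\exp(2\pi i\,{}^t\!(\sum_k s_k)\beta)=(-1)^{2{}^t\!(\sum_k s_k)\beta}$, and $\tfrac1{2^g}\sum_{\beta\in\{0,1/2\}^g}(-1)^{4{}^t\!a_1\beta}(-1)^{2{}^t\!(\sum_k s_k)\beta}$ equals $1$ if $\sum_k s_k\equiv2a_1\pmod{2\mathbb{Z}^g}$ and $0$ otherwise. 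This is exactly the desired indicator and accounts for the weight $(-1)^{4{}^t\!a\beta}$, with $a=a_1$; the quasi-periodicity $\theta[\begin{smallmatrix}a+\alpha\\ b+\beta\end{smallmatrix}]=(-1)^{4{}^t\!a\beta}\theta[\begin{smallmatrix}a\\ b\end{smallmatrix}]$ recorded before Lemma~\ref{even-odd} is then used to replace the half-integer characteristics $n_k$ by representatives in $\{0,1/2\}^g$. Reassembling the pieces gives the general identity, and setting $z_1=\dots=z_4=0$ recovers the statement. A less computational alternative is to observe that, in the addition form, both sides are holomorphic sections of one and the same line bundle on a product of abelian varieties, whose space of global sections is one-dimensional, so that the two sides are proportional and the constant is pinned down by a single Fourier coefficient; that route, however, requires first developing theta functions with characteristics as sections of translates of the principal polarization.
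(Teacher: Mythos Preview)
Your argument is correct: it is the classical direct proof via the orthogonality of the $4\times4$ matrix $T$ together with the coset/parity bookkeeping for the lattice $\mathbf{T}\,\mathbb{Z}^{4g}$, and your identification $a=a_1$ in the sign factor is right. The paper does not reprove the identity at all; its proof is the single line ``Apply \cite[Chapter~IV, Theorem~1]{Igusa} for $z_1=z_2=z_3=z_4=0$,'' so you have simply supplied the contents of that citation --- indeed, you even prove the more general addition form first and then specialize, exactly as the cited reference does. One small point: the closing remark about using quasi-periodicity to force the $n_k$ into $\{0,1/2\}^g$ is unnecessary here, since the statement uses the $n_k$ as they come.
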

\begin{proof}
Apply \cite[Chapter\,IV, Theorem 1]{Igusa} for $z_1 = z_2 = z_3 = z_4 = 0$.
\end{proof}
\begin{Thm}[Duplication formula]\label{duplication}
For any $a,b \in \{0,1/2\}^g$, we have
\[
    \theta\bigl[\begin{smallmatrix}a\\b\end{smallmatrix}\bigr](z,\varOmega)^2 = \frac{1}{2^g}\sum_{\beta \in \{0,1/2\}^g}\!(-1)^{4{}^t\!a\beta}\theta\bigl[\begin{smallmatrix}0\\\beta\end{smallmatrix}\bigr](0,\varOmega/2)\theta\bigl[\begin{smallmatrix}0\\b+\beta\end{smallmatrix}\bigr](z,\varOmega/2).
\]
\end{Thm}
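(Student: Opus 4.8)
The plan is to prove the identity by a direct manipulation of the defining series, following the classical ``halving'' argument. Since $\mathrm{Im}\,\varOmega>0$ the theta series converges absolutely, so I may square it and write
\[
    \theta\bigl[\begin{smallmatrix}a\\b\end{smallmatrix}\bigr](z,\varOmega)^2
    = \sum_{p,\,q\,\in\,a+\mathbb{Z}^g}\exp\bigl(\pi i\,{}^t\!p\,\varOmega p+\pi i\,{}^t\!q\,\varOmega q+2\pi i\,({}^t\!p+{}^t\!q)(z+b)\bigr),
\]
where $p=m+a$ and $q=n+a$ run independently over the coset $a+\mathbb{Z}^g$ as $m,n$ range over $\mathbb{Z}^g$.

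Next I would substitute $\xi=p+q$ and $\eta=p-q$. The key identity is the parallelogram law ${}^t\!p\,\varOmega p+{}^t\!q\,\varOmega q=\tfrac12\bigl({}^t\!\xi\,\varOmega\xi+{}^t\!\eta\,\varOmega\eta\bigr)$, which diagonalizes the quadratic form, together with ${}^t\!p+{}^t\!q={}^t\!\xi$ and $\tfrac{\pi i}{2}{}^t\!v\,\varOmega v=\pi i\,{}^t\!v(\varOmega/2)v$. Since $a\in\{0,1/2\}^g$ we have $2a\in\{0,1\}^g\subset\mathbb{Z}^g$, so $\xi=2a+(m+n)$ and $\eta=m-n$ are integer vectors, and $(m,n)\mapsto(\xi,\eta)$ is a bijection from $\mathbb{Z}^g\times\mathbb{Z}^g$ onto $\{(\xi,\eta)\in\mathbb{Z}^g\times\mathbb{Z}^g:\xi+\eta\equiv 2a\ (\mathrm{mod}\ 2\mathbb{Z}^g)\}$ (the parity condition is exactly what makes $m=\tfrac12(\xi-2a+\eta)$ and $n=\tfrac12(\xi-2a-\eta)$ integral). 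Re-indexing yields
\[
    \theta\bigl[\begin{smallmatrix}a\\b\end{smallmatrix}\bigr](z,\varOmega)^2
    = \sum_{\substack{\xi,\,\eta\,\in\,\mathbb{Z}^g\\ \xi+\eta\,\equiv\,2a\ (2)}}\exp\bigl(\pi i\,{}^t\!\xi(\varOmega/2)\xi+2\pi i\,{}^t\!\xi(z+b)\bigr)\cdot\exp\bigl(\pi i\,{}^t\!\eta(\varOmega/2)\eta\bigr).
\]

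To decouple the two variables I would resolve the parity constraint by Fourier analysis on $(\mathbb{Z}/2\mathbb{Z})^g$: for $x\in\mathbb{Z}^g$ the sum $2^{-g}\sum_{\beta\in\{0,1/2\}^g}(-1)^{2\,{}^t\!\beta x}$ equals $1$ if $x\in 2\mathbb{Z}^g$ and $0$ otherwise (it factors as $\prod_i\tfrac12(1+(-1)^{x_i})$). Inserting this with $x=\xi+\eta-2a$ removes the constraint, so that $\xi$ and $\eta$ now run independently over all of $\mathbb{Z}^g$; after interchanging the (absolutely convergent) summations and factoring $(-1)^{2\,{}^t\!\beta(\xi+\eta-2a)}=(-1)^{2\,{}^t\!\beta\xi}\,(-1)^{2\,{}^t\!\beta\eta}\,(-1)^{4\,{}^t\!\beta a}$, the double sum splits for each fixed $\beta$ into a product of two single sums. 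Using $(-1)^{2\,{}^t\!\beta v}=e^{2\pi i\,{}^t\!v\beta}$ for $v\in\mathbb{Z}^g$, the $\eta$-sum is $\sum_{\eta\in\mathbb{Z}^g}e^{\pi i\,{}^t\!\eta(\varOmega/2)\eta+2\pi i\,{}^t\!\eta\beta}=\theta\bigl[\begin{smallmatrix}0\\\beta\end{smallmatrix}\bigr](0,\varOmega/2)$ and the $\xi$-sum is $\sum_{\xi\in\mathbb{Z}^g}e^{\pi i\,{}^t\!\xi(\varOmega/2)\xi+2\pi i\,{}^t\!\xi(z+b+\beta)}=\theta\bigl[\begin{smallmatrix}0\\b+\beta\end{smallmatrix}\bigr](z,\varOmega/2)$. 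Together with the remaining sign $(-1)^{4\,{}^t\!\beta a}=(-1)^{4\,{}^t\!a\beta}$ and the prefactor $2^{-g}$, this is exactly the asserted formula.

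The routine parts are the two convergence/rearrangement justifications and keeping track of which $(-1)$-exponents are even; I expect the only step needing genuine care to be verifying that $(m,n)\mapsto(\xi,\eta)$ is a bijection onto the stated parity-constrained lattice (so that no multiplicities are introduced and the re-indexed series is literally equal to the original one), after which the Fourier trick applies cleanly. As an alternative, one could instead deduce the formula from the general transformation theory of theta functions as in \cite{Igusa}, but the self-contained computation above is short enough to carry out directly.
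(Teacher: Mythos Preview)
Your proof is correct. The change of variables $(p,q)\mapsto(\xi,\eta)=(p+q,p-q)$ together with the parallelogram law is the right idea, the bijection onto the parity-constrained sublattice is verified accurately, and the Fourier-analytic resolution of the constraint $\xi+\eta\equiv 2a\pmod{2}$ via $2^{-g}\sum_{\beta}(-1)^{2\,{}^t\!\beta x}$ splits the double sum cleanly into the desired product of two theta series. The sign bookkeeping $(-1)^{-4\,{}^t\!\beta a}=(-1)^{4\,{}^t\!a\beta}$ is also fine.

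Your approach differs from the paper's, however. The paper does not give a self-contained argument; it simply specializes \cite[Chapter~IV, Theorem~2]{Igusa} (a general addition/duplication formula for theta functions with two variables $z_1,z_2$ and two characteristics $m_1,m_2$) to the case $m_1=m_2$ and $z_1=z_2=z$. So the paper's ``proof'' is a one-line citation, while you carry out the classical halving computation from scratch. What your route buys is that it is elementary and fully self-contained, requiring no appeal to external transformation theory; what the paper's route buys is brevity and a direct link to the standard reference, at the cost of requiring the reader to look up Igusa. You already anticipated this alternative in your final remark, which is a nice touch.
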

\begin{proof}
Apply \cite[Chapter\,IV, Theorem 2]{Igusa} for $m_1 = m_2$ and $z_1 = z_2 = z$.
\end{proof}\smallskip

In particular, if a matrix $\varOmega \in \mathcal{H}_g$ is written as
\[
    \varOmega = \begin{pmatrix}
    \varOmega_1 & 0\\
    0 & \varOmega_2
    \end{pmatrix},\ {\rm where}\ \,\varOmega_1 \in \mathcal{H}_{g_1} \text{ and }\, \varOmega_2 \in \mathcal{H}_{g_2}
\]
with $g_1 + g_2 = g$, then we have the following lemma:
\begin{Lem}[{\cite[Lemma F.3.1]{SQIsignHD}}]\label{diagonal}
With the above notations, we write $z \coloneqq (z_1,z_2)$ where $z_1 \in \mathbb{C}^{g_1},\,z_2 \in \mathbb{C}^{g_2}$. Then, we have the equation\vspace{0.5mm}
\[
    \theta\bigl[\begin{smallmatrix}a_1 & a_2\\b_1 & b_2\end{smallmatrix}\bigr](z,\varOmega) = \theta\bigl[\begin{smallmatrix}a_1\\b_1\end{smallmatrix}\bigr](z_1,\varOmega_1)\theta\bigl[\begin{smallmatrix}a_2\\b_2 \end{smallmatrix}\bigr](z_2,\varOmega_2)\medskip
\]
for all characteristics $a_1,b_1 \in \{0,1/2\}^{g_1}\!$ and $a_2,b_2 \in \{0,1/2\}^{g_2}$.
\end{Lem}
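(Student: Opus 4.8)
The plan is to prove Lemma~\ref{diagonal} directly from the series definition of the theta function, exploiting the block-diagonal structure of $\varOmega$. First I would write $n \in \mathbb{Z}^g$ as $n = (n_1, n_2)$ with $n_1 \in \mathbb{Z}^{g_1}$ and $n_2 \in \mathbb{Z}^{g_2}$, and similarly decompose the characteristics $a = (a_1, a_2)$, $b = (b_1, b_2)$ and the argument $z = (z_1, z_2)$. The key observation is that because $\varOmega$ is block-diagonal, the quadratic form in the exponent splits as a sum: ${}^t(n+a)\varOmega(n+a) = {}^t(n_1+a_1)\varOmega_1(n_1+a_1) + {}^t(n_2+a_2)\varOmega_2(n_2+a_2)$, with no cross terms. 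Likewise the linear term ${}^t(n+a)(z+b)$ splits as ${}^t(n_1+a_1)(z_1+b_1) + {}^t(n_2+a_2)(z_2+b_2)$.

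Next I would substitute these two splittings into the exponent in the definition of $\theta\bigl[\begin{smallmatrix}a_1 & a_2\\b_1 & b_2\end{smallmatrix}\bigr](z,\varOmega)$. Since the exponent is now a sum of a term depending only on $n_1$ and a term depending only on $n_2$, the exponential factors as a product, and the double sum over $n \in \mathbb{Z}^{g_1} \times \mathbb{Z}^{g_2}$ factors as a product of two independent sums, one over $n_1 \in \mathbb{Z}^{g_1}$ and one over $n_2 \in \mathbb{Z}^{g_2}$. Each of these is exactly the defining series for $\theta\bigl[\begin{smallmatrix}a_1\\b_1\end{smallmatrix}\bigr](z_1,\varOmega_1)$ and $\theta\bigl[\begin{smallmatrix}a_2\\b_2\end{smallmatrix}\bigr](z_2,\varOmega_2)$ respectively, which yields the claimed identity.

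There is essentially no hard step here; the only point requiring a word of care is the rearrangement of the double series into a product of two series, which is justified by absolute convergence of the theta series (guaranteed by ${\rm Im}\,\varOmega > 0$, equivalently ${\rm Im}\,\varOmega_1 > 0$ and ${\rm Im}\,\varOmega_2 > 0$), so that Fubini/Tonelli applies. Since this lemma is quoted verbatim from \cite[Lemma F.3.1]{SQIsignHD}, I would keep the argument to a couple of lines and simply cite that reference, indicating the block-diagonal splitting of the exponent as the mechanism.
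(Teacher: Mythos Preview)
Your argument is correct and is exactly the standard proof: the block-diagonal form of $\varOmega$ makes the exponent separate into a sum of two independent pieces, so the absolutely convergent series factors as a product. Note that the paper itself does not supply a proof of this lemma at all; it simply quotes the statement from \cite[Lemma F.3.1]{SQIsignHD}, so there is no ``paper's own proof'' to compare against beyond that citation, and your direct computation is precisely what underlies the cited result.
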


Let us return to cases with a general (not necessarily diagonal) matrix $\varOmega \in \mathcal{H}_g$.
For a symplectic matrix $M = \begin{psmallmatrix} \alpha & \beta\\[-0.3mm] \gamma & \delta\end{psmallmatrix} \in {\rm Sp}_{2g}(\mathbb{Z})$ and characteristics $a,b \in \{0,1/2\}^g$, we define two vectors
\begin{equation}\label{index}
    M.a \coloneqq a\hspace{0.3mm}^t\hspace{-0.3mm}\delta - b\,^t\hspace{-0.3mm}\gamma + \frac{1}{2}(\gamma\hspace{0.3mm}^t\hspace{-0.3mm}\delta)_0, \quad M.b \coloneqq b\,^t\hspace{-0.5mm}\alpha - a\hspace{0.3mm}^t\hspace{-0.3mm}\beta + \frac{1}{2}(\alpha\hspace{0.3mm}^t\hspace{-0.3mm}\beta)_0
\end{equation}
and a scalar\vspace{-0.5mm}
\begin{equation}\label{scalar}
    k(M,a,b) \coloneqq (a\hspace{0.3mm}^t\hspace{-0.3mm}\delta - b\,^t\hspace{-0.3mm}\gamma) \,\cdot\hspace{0.3mm}^t\hspace{-0.3mm}(b\,^t\hspace{-0.3mm}\alpha - a\hspace{0.3mm}^t\hspace{-0.3mm}\beta + (\alpha\hspace{0.3mm}^t\hspace{-0.3mm}\beta)_0) - a\hspace{0.3mm}^tb\smallskip
\end{equation}
where $(\alpha\hspace{0.3mm}^t\hspace{-0.4mm}\beta)_0,\hspace{0.3mm}(\gamma\hspace{0.5mm}^t\hspace{-0.3mm}\delta)_0$ denote the row vector of the diagonal elements of $(\alpha\hspace{0.3mm}^t\hspace{-0.3mm}\beta),\hspace{0.3mm}(\gamma\hspace{0.5mm}^t\hspace{-0.3mm}\delta)$.
Then, the matrix $M$ acts on theta functions in the following way:\vspace{-0.5mm}

\begin{Thm}[Theta transformation formula]\label{trans}
For any $a,b \in \{0,1/2\}^g$, we have
\[
    \theta\bigl[\begin{smallmatrix}M.a\\\hspace{-0.3mm}M.b\end{smallmatrix}\bigr](M.z,M.\varOmega) = c \cdot e^{\pi ik(M,a,b)}\theta\bigl[\begin{smallmatrix}a\\b\end{smallmatrix}\bigr](z,\varOmega),\smallskip
\]
where $c \in \mathbb{C}^\times\!$ is a constant which does not depend $a$ or $b$.
\end{Thm}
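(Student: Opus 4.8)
The plan is to reduce the statement to the case where $M$ is one of the standard generators of ${\rm Sp}_{2g}(\mathbb{Z})$, verify the formula for each generator by direct manipulation of the defining series, and then propagate it to the whole group by a cocycle argument.

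\emph{Step 1: reduction to generators.} The group ${\rm Sp}_{2g}(\mathbb{Z})$ is generated by the three families $t_B = \begin{psmallmatrix}{\mathbf 1}_g & B\\ 0 & {\mathbf 1}_g\end{psmallmatrix}$ with $B = {}^tB$ integral, $u_U = \begin{psmallmatrix}{}^tU^{-1} & 0\\ 0 & U\end{psmallmatrix}$ with $U \in {\rm GL}_g(\mathbb{Z})$, and $E = \begin{psmallmatrix}0 & {\mathbf 1}_g\\ -{\mathbf 1}_g & 0\end{psmallmatrix}$. Since (as shown in Step 3) the family of transformation laws of the asserted shape is stable under composition, it suffices to verify the formula for these generators.

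\emph{Step 2: the generating cases.} For $M = t_B$ one has $M.\varOmega = \varOmega + B$ and $M.z = z$; inserting this into $\theta\bigl[\begin{smallmatrix}a\\b\end{smallmatrix}\bigr]$ contributes the factor $\exp\bigl(\pi i\,{}^t(n+a)B(n+a)\bigr)$ under the sum. Expanding ${}^t(n+a)B(n+a) = {}^tnBn + 2\,{}^tnBa + {}^taBa$ and using ${}^tnBn \equiv {}^tn\,(B)_0 \pmod{2}$, valid because $B$ is integral and symmetric with diagonal row vector $(B)_0$, rewrites the series as a theta series with the shifted characteristic $(M.a, M.b)$ of \eqref{index}, the residual phase being exactly $e^{\pi i k(M,a,b)}$ and $c = 1$. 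For $M = u_U$ one has $M.\varOmega = {}^tU^{-1}\varOmega U^{-1}$ and $M.z = {}^tU^{-1}z$, and the unimodular change of summation variable $n \mapsto Un$ identifies the two series, again with $c = 1$. The case $M = E$, where $M.\varOmega = -\varOmega^{-1}$ and $M.z = -\varOmega^{-1}z$, is the substantive one: it is the multivariate Jacobi inversion formula, proved by Poisson summation over the lattice $\mathbb{Z}^g$, and this is where the genuinely nontrivial factor enters, namely a fixed branch of $c = \sqrt{\det(\varOmega/i)}$ — which manifestly depends on $\varOmega$ but not on $a$ or $b$.

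\emph{Step 3: closure under composition, and conclusion.} Assume the formula holds for $M_1$ and $M_2$. Using $(M_1M_2).\varOmega = M_1.(M_2.\varOmega)$ and $(M_1M_2).z = M_1.(M_2.z)$, applying the law for $M_2$ and then for $M_1$ yields a transformation law for $M_1M_2$ whose characteristic is $M_1.(M_2.a), M_1.(M_2.b)$; by the quasi-periodicity identity $\theta\bigl[\begin{smallmatrix}a+\alpha\\b+\beta\end{smallmatrix}\bigr](z,\varOmega) = (-1)^{4\,{}^ta\beta}\theta\bigl[\begin{smallmatrix}a\\b\end{smallmatrix}\bigr](z,\varOmega)$ recalled before Lemma \ref{even-odd}, this characteristic can be reduced to $(M_1M_2).a,\ (M_1M_2).b$ at the cost of an $(a,b)$-independent sign. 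A direct computation then shows that $k(M_1, M_2.a, M_2.b) + k(M_2, a, b)$ and $k(M_1M_2, a, b)$ differ by an integer and by a term not involving $a$ or $b$, so that all discrepancies are absorbed into a constant still independent of $a$ and $b$. Since the generators of Step 1 exhaust ${\rm Sp}_{2g}(\mathbb{Z})$, the formula holds for every $M$; this is precisely the computation carried out in \cite[Chapter V]{Igusa}, which may be cited directly. The only serious analytic input is the inversion formula for $M = E$ via Poisson summation, but the more delicate point is the arithmetic bookkeeping in Step 3: one must track the half-integer shifts $\tfrac12(\alpha\,{}^t\beta)_0$, $\tfrac12(\gamma\,{}^t\delta)_0$ and the scalars $k(M,a,b)$ modulo the correct lattices and confirm that under composition every mismatch is independent of $a$ and $b$, so that it can legitimately be hidden inside $c$.
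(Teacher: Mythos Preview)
The paper's own proof is a one-line citation to \cite[Theorem 8.6.1]{Lange}; your sketch is the classical generator-plus-cocycle argument (essentially the proof in Igusa \cite[Chapter V]{Igusa}, which you also invoke). So the approaches are not really different in spirit---you have simply unpacked what the reference contains---and your Steps 1 and 2 are correct as stated.

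There is, however, a slip in Step 3. When you reduce the characteristic $M_1.(M_2.a),\,M_1.(M_2.b)$ to $(M_1M_2).a,\,(M_1M_2).b$ via quasi-periodicity, the resulting sign is \emph{not} $(a,b)$-independent. The integer shift $(\alpha,\beta)$ between the two characteristics is indeed independent of $(a,b)$ (since the linear parts of the two actions agree and only the affine shifts differ), but the quasi-periodicity factor is $\exp\!\bigl(2\pi i\,{}^t((M_1M_2).a)\,\beta\bigr)$, and $(M_1M_2).a$ is an affine function of $(a,b)$. For instance, with $g=1$, $M_1 = \begin{psmallmatrix}1&1\\0&1\end{psmallmatrix}$ and $M_2 = \begin{psmallmatrix}0&1\\-1&0\end{psmallmatrix}$, the shift is $(\alpha,\beta)=(0,1)$ and the sign is $\exp(2\pi i b) = (-1)^{2b}$, which depends on $b$.

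The correct statement---and what the computation in Igusa actually establishes---is that this $(a,b)$-dependent sign \emph{together with} the discrepancy $k(M_1,M_2.a,M_2.b)+k(M_2,a,b)-k(M_1M_2,a,b)$ is congruent modulo $2\mathbb{Z}$ to something independent of $(a,b)$. You have split this into two pieces and asserted each is separately $(a,b)$-independent, which is false; only their sum is. Since you ultimately defer to \cite{Igusa} for the details, the overall strategy survives, but the sentence ``at the cost of an $(a,b)$-independent sign'' should be corrected.
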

\begin{proof}
This directly follows from \cite[Theorem 8.6.1]{Lange}.
\end{proof}

Let $n \geq 1$ be an integer and define
\begin{align*}
    {\varGamma_0(n)} &\coloneqq {\bigl\{M = \begin{psmallmatrix} \alpha & \beta\\[-0.3mm] \gamma & \delta\end{psmallmatrix} \in {\rm Sp}_{2g}(\mathbb{Z})\,\mid\,\gamma \equiv \mathbf{0}_g \!\!\pmod{n}\bigr\}},\\
    \varGamma_n &\coloneqq \bigl\{M = \begin{psmallmatrix} \alpha & \beta\\[-0.3mm] \gamma & \delta\end{psmallmatrix}\in {\rm Sp}_{2g}(\mathbb{Z})\,\mid\,\alpha \equiv \delta \equiv {\mathbf 1}_g,\ \beta \equiv \gamma \equiv {\mathbf 0}_g\!\pmod{n}\bigr\},\\
    \varGamma_{n,2n} &\coloneqq \bigl\{M = \begin{psmallmatrix} \alpha & \beta\\[-0.3mm] \gamma & \delta\end{psmallmatrix} \in \varGamma_n\,\mid\,{\rm diag}(\hspace{0.3mm}^t\hspace{-0.3mm}\alpha\gamma) \equiv {\rm diag}(\hspace{0.3mm}^t\hspace{-0.3mm}\beta\hspace{0.3mm}\delta) \equiv 0\!\pmod{2n}\},
\end{align*}
which are all subgroups of ${\rm Sp}_{2g}(\mathbb{Z})$.
\begin{Cor}\label{Gamma}
For any $a,b \in \{0,1/2\}^g$, we have the equations
\begin{alignat*}{3}
    \frac{\theta\bigl[\begin{smallmatrix}a\\b\end{smallmatrix}\bigr](0,M.\varOmega)^4}{\theta\bigl[\begin{smallmatrix}0\\0\end{smallmatrix}\bigr](0,M.\varOmega)^4} &= \frac{\theta\bigl[\begin{smallmatrix}a\\b\end{smallmatrix}\bigr](0,\varOmega)^4}{\theta\bigl[\begin{smallmatrix}0\\0\end{smallmatrix}\bigr](0,\varOmega)^4}& \quad & \text{if}\ M \in \varGamma_2,\\
    \frac{\theta\bigl[\begin{smallmatrix}a\\b\end{smallmatrix}\bigr](0,M.\varOmega)^2}{\theta\bigl[\begin{smallmatrix}0\\0\end{smallmatrix}\bigr](0,M.\varOmega)^2} &= \frac{\theta\bigl[\begin{smallmatrix}a\\b\end{smallmatrix}\bigr](0,\varOmega)^2}{\theta\bigl[\begin{smallmatrix}0\\0\end{smallmatrix}\bigr](0,\varOmega)^2}& \quad & \text{if}\ M \in \varGamma_{2,4}, \text{ and}\\
    \frac{\theta\bigl[\begin{smallmatrix}a\\b\end{smallmatrix}\bigr](0,M.\varOmega)}{\theta\bigl[\begin{smallmatrix}0\\0\end{smallmatrix}\bigr](0,M.\varOmega)} &= \frac{\theta\bigl[\begin{smallmatrix}a\\b\end{smallmatrix}\bigr](0,\varOmega)}{\theta\bigl[\begin{smallmatrix}0\\0\end{smallmatrix}\bigr](0,\varOmega)}& \quad & \text{if}\ M \in \varGamma_{4,8}.\\[-5mm]
\end{alignat*}
\end{Cor}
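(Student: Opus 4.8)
The plan is to deduce all three identities from the theta transformation formula (Theorem~\ref{trans}) evaluated at $z=0$, once one checks that the relevant groups fix every characteristic modulo $\mathbb{Z}^g$; what remains afterwards is a congruence computation with the scalar $k(M,a,b)$ of~(\ref{scalar}).

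Since $\varGamma_{4,8}\subset\varGamma_{2,4}\subset\varGamma_2$, I would first analyse a general $M=\begin{psmallmatrix}\alpha&\beta\\\gamma&\delta\end{psmallmatrix}\in\varGamma_2$ and impose the stronger congruences only where they are needed. Write $\alpha={\mathbf 1}_g+2\alpha'$, $\delta={\mathbf 1}_g+2\delta'$, $\beta=2\beta'$ and $\gamma=2\gamma'$ with integral matrices. Using $2a,2b\in\mathbb{Z}^g$ and these decompositions, formula~(\ref{index}) gives $M.a\equiv a$ and $M.b\equiv b\pmod{\mathbb{Z}^g}$, so by the quasi-periodicity relation for theta functions one has $\theta\bigl[\begin{smallmatrix}M.a\\M.b\end{smallmatrix}\bigr](0,M.\varOmega)=\epsilon(M,a,b)\,\theta\bigl[\begin{smallmatrix}a\\b\end{smallmatrix}\bigr](0,M.\varOmega)$ for some sign $\epsilon(M,a,b)\in\{\pm1\}$, with $\epsilon(M,0,0)=1$ because the quasi-periodicity sign is trivial for the zero characteristic. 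Substituting this into Theorem~\ref{trans} at $z=0$ and dividing by the relation obtained for the zero characteristic --- for which $k(M,0,0)=0$ by~(\ref{scalar}) --- cancels the characteristic-independent constant $c$ and yields
\[
\frac{\theta\bigl[\begin{smallmatrix}a\\b\end{smallmatrix}\bigr](0,M.\varOmega)}{\theta\bigl[\begin{smallmatrix}0\\0\end{smallmatrix}\bigr](0,M.\varOmega)}=\mu(M,a,b)\cdot\frac{\theta\bigl[\begin{smallmatrix}a\\b\end{smallmatrix}\bigr](0,\varOmega)}{\theta\bigl[\begin{smallmatrix}0\\0\end{smallmatrix}\bigr](0,\varOmega)},\qquad \mu(M,a,b)\coloneqq\epsilon(M,a,b)\,e^{\pi i k(M,a,b)}
\]
(and since $\theta\bigl[\begin{smallmatrix}0\\0\end{smallmatrix}\bigr](0,M.\varOmega)=c\,\theta\bigl[\begin{smallmatrix}0\\0\end{smallmatrix}\bigr](0,\varOmega)$ with $c\neq0$, the nonvanishing assumed at $\varOmega$ is inherited by $M.\varOmega$). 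Hence the three assertions become $\mu(M,a,b)^4=1$ on $\varGamma_2$, $\mu(M,a,b)^2=1$ on $\varGamma_{2,4}$, and $\mu(M,a,b)=1$ on $\varGamma_{4,8}$; as $\epsilon(M,a,b)\in\{\pm1\}$, these are in turn equivalent to $2k(M,a,b)\in\mathbb{Z}$, to $k(M,a,b)\in\mathbb{Z}$, and to $k(M,a,b)\in\mathbb{Z}$ with $(-1)^{k(M,a,b)}=\epsilon(M,a,b)$, respectively.

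It then remains to verify these divisibilities by direct computation with~(\ref{scalar}). Writing $a\,{}^t\!\delta-b\,{}^t\!\gamma=a+u$ and $b\,{}^t\!\alpha-a\,{}^t\!\beta+(\alpha\,{}^t\!\beta)_0=b+v$ with $u,v\in\mathbb{Z}^g$ (again by the $\varGamma_2$-congruences), expansion gives $k(M,a,b)=a\,{}^t\!v+u\,{}^t\!b+u\,{}^t\!v$, whence $k(M,a,b)\in\tfrac12\mathbb{Z}$ and $2k(M,a,b)\equiv(2a)\cdot v+u\cdot(2b)\pmod2$. This settles the first identity at once; for the remaining two one uses the explicit forms $u=(2a)\,{}^t\!\delta'-(2b)\,{}^t\!\gamma'$ and $v=(2b)\,{}^t\!\alpha'-(2a)\,{}^t\!\beta'+(\alpha\,{}^t\!\beta)_0$ and checks, working modulo $2$, that the surviving contributions to $(2a)\cdot v+u\cdot(2b)$ --- and, for the $\varGamma_{4,8}$ case, also the sign $\epsilon(M,a,b)$ --- are exactly those annihilated by the extra diagonal congruences ${\rm diag}({}^t\!\alpha\gamma)\equiv{\rm diag}({}^t\!\beta\delta)\equiv0\pmod{2n}$ defining $\varGamma_{n,2n}$ (with $n=2$, resp.\ $n=4$). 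I expect this congruence bookkeeping --- carefully tracking the diagonal terms coming from $(\alpha\,{}^t\!\beta)_0$ and from $\epsilon(M,a,b)$, and matching them against the defining conditions of the two groups --- to be the only genuinely technical point of the argument; alternatively, one may simply invoke Igusa's explicit determination of the theta multiplier system on the congruence subgroups $\varGamma_{n,2n}$ (cf.\ \cite{Igusa}).
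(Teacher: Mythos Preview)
Your approach is correct and is essentially the same as the paper's: apply Theorem~\ref{trans} at $z=0$, divide by the relation for the zero characteristic to cancel the constant $c$, and reduce everything to a congruence check on the resulting multiplier. The paper compresses your $\epsilon(M,a,b)\,e^{\pi i k(M,a,b)}$ into the single closed-form factor $\exp\bigl(\pi i\, a\,{}^t\!\beta\delta\,{}^t\!a-\pi i\, b\,{}^t\!\gamma\alpha\,{}^t\!b-2\pi i\, a({}^t\!\alpha-\mathbf{1}_g)\,{}^t\!b\bigr)$ and then dismisses the remaining verification as ``simple computations'', whereas you spell out the decomposition via $u,v$ --- but the substance is identical.
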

\begin{proof}
It follows from Theorem \ref{trans} that
\[
    \frac{\theta\bigl[\begin{smallmatrix}a\\b\end{smallmatrix}\bigr](0,M.\varOmega)}{\theta\bigl[\begin{smallmatrix}0\\0\end{smallmatrix}\bigr](0,M.\varOmega)} = \frac{\theta\bigl[\begin{smallmatrix}a\\b\end{smallmatrix}\bigr](0,\varOmega)}{\theta\bigl[\begin{smallmatrix}0\\0\end{smallmatrix}\bigr](0,\varOmega)}\exp\bigl(\pi i a\hspace{0.3mm}^t\hspace{-0.3mm}\beta\hspace{0.3mm}\delta\,^t\hspace{-0.3mm}a-\pi i b\hspace{0.3mm}^t\hspace{-0.3mm}\gamma\hspace{0.3mm}\alpha\hspace{0.3mm}^tb-2\pi ia(\hspace{0.3mm}^t\hspace{-0.3mm}\alpha-\mathbf{1}_g)\hspace{0.3mm}^tb\bigr).
\]
Hence, we obtain this assertion by simple computations.
\end{proof}

\subsection{Theta null-points of abelian threefolds}\label{g=3}
In this subsection, we focus only on the case that $A$ is a principally polarized abelian threefold, and we discuss theta functions associated to $A$.
Thanks to \cite[Corollary 11.8.2]{Lange}, such $A$ is classified into one of the following four types:
\begin{equation*}
    \begin{split}
    \left\{
    \begin{array}{l}
    {\rm Type\ 1\!:\ }\,A \cong {\rm Jac}(C)\ \text{where}\ C\ \text{is\ a\ smooth\ plane\ quartic},\\[0.5mm]
    {\rm Type\ 2\!:\ }\,A \cong {\rm Jac}(C)\ \text{where}\ C\ \text{is\ a\ hyperelliptic\ curve\ of\ genus\ } 3,\\[0.5mm]
    {\rm Type\ 3\!:\ }\,A \cong E \times {\rm Jac}(C)\ \text{where}\ E\ \text{and}\ C\text{\ is\ a genus-1 and genus-2 curve},\\[0.5mm]
    {\rm Type\ 4\!:\ }\,A \cong E_1 \times E_2 \times E_3\ \text{where}\ E_1,E_2 \text{\ and\ } E_3 \text{\ are\ elliptic\ curves}.
    \end{array}
    \right.
    \end{split}
\end{equation*}
First of all, we introduce the following notations for the sake of simplicity.
\begin{Def}\label{notation}
For a period matrix $\varOmega \in \mathcal{H}_3$ of $A$, we write\smallskip
\[
    \vartheta_i(z,\varOmega) \coloneqq \theta\bigl[\begin{smallmatrix}a_1 & a_2 & a_3\\b_1 & b_2 & b_3\end{smallmatrix}\bigr](z,\varOmega), \quad i \coloneqq 2b_1 + 4b_2 + 8b_3 + 16a_1 + 32a_2 + 64a_3,\medskip
\]
where $a_1,a_2,a_3,b_1,b_2$, and $b_3$ belong to $\{0,1/2\}$.
Note that $i$ runs over $\{0,\ldots,63\}$.
For each $i$, the value $\vartheta_i(0,\varOmega)$ is written as $\vartheta_i$, and called the $i$-th \emph{theta constant}.
\end{Def}

We consider a principally polarized abelian threefold $A \cong \mathbb{C}^3/(\mathbb{Z}^3+\varOmega\mathbb{Z}^3)$, then we call $[\vartheta_0:\vartheta_1:\cdots:\vartheta_{63}] \in \mathbb{P}^{63}(\mathbb{C})$ a \emph{level-4 (analytic) theta null-point} of $A$. Also, we call $[\vartheta_0^2:\vartheta_1^2:\cdots:\vartheta_{63}^2] \in \mathbb{P}^{63}(\mathbb{C})$ a \emph{squared theta null-point} of $A$.
Note that even for isomorphic principally polarized abelian threefolds, the level-4 theta null-points may differ depending on the choice of $\varOmega$ modulo the action of $\varGamma_{4,8}$.
\begin{Def}
We define the sets\smallskip
\[
    I_{\rm even} \coloneqq \left\{
    \begin{array}{l}
        0,\,1,\,2,\,3,\,4,\,5,\,6,\,7,\,8,\,10,\,12,\,14,\,16,\,17,\,20,\,21,\,24,\,27,\,28,\\
        31,\,32,\,33,\,34,\,35,\,40,\,42,\,45,\,47,\,48,\,49,\,54,\,55,\,56,\,59,\,61,\,62
    \end{array}
    \right\}
\]
and $I_{\rm odd} \coloneqq \{0,\ldots,63\}\!\smallsetminus\!I_{\rm even}$.
Here, we remark that $\#I_{\rm even} = 36$ and $\#I_{\rm odd} = 28$.
\end{Def}\smallskip

It follows from Lemma \ref{even-odd} that $\vartheta_i(z)$ is even (resp. odd) if and only if $i \in I_{\rm even}$ (resp. $i \in I_{\rm odd}$).
Therefore, we have that $\vartheta_i = 0$ for all $i \in I_{\rm odd}$.
\begin{Lem}\label{invariant}
Let $S_{\rm van}$ be the set of vanishing indices $i \in I_{\rm even}$, that is,
\[
    S_{\rm van} \coloneqq \{i \in I_{\rm even} \mid \vartheta_i = 0\}.\smallskip
\]
Then, the cardinality $N_{\rm van}$ of $S_{\rm van}$ is invariant for isomorphic principally polarized abelian threefolds.
\end{Lem}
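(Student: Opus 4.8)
The plan is to reduce the statement to the level of period matrices and then to show that an element of ${\rm Sp}_6(\mathbb{Z})$ acts on the $64$ theta characteristics by a permutation which sends vanishing theta constants exactly to vanishing theta constants; the number of such characteristics --- and hence $N_{\rm van}$ --- is then manifestly unchanged.

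First I would apply Proposition \ref{isom}: if $\varOmega,\varOmega'\in\mathcal{H}_3$ are period matrices of isomorphic principally polarized abelian threefolds, then $\varOmega'=M.\varOmega$ for some $M=\begin{psmallmatrix}\alpha&\beta\\\gamma&\delta\end{psmallmatrix}\in{\rm Sp}_6(\mathbb{Z})$, so it suffices to prove the invariance for a single such $M$. For $a,b\in\{0,1/2\}^3$ let $a',b'\in\{0,1/2\}^3$ be $M.a$ and $M.b$ from \eqref{index} reduced modulo $\mathbb{Z}^3$. Since $M$ is invertible over $\mathbb{Z}$ and \eqref{index} is affine in $(a,b)$ with invertible linear part modulo $1$, the assignment $(a,b)\mapsto(a',b')$ is a bijection of $\{0,1/2\}^3\times\{0,1/2\}^3$; through the indexing of Definition \ref{notation} it defines a permutation $\tau$ of $\{0,\dots,63\}$. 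Because $z\mapsto M.z$ is a $\mathbb{C}$-linear automorphism of $\mathbb{C}^3$, one checks from Theorem \ref{trans} that $\vartheta_{\tau(i)}(\,\cdot\,,M.\varOmega)$ is an even (resp. odd) function of $z$ if and only if $\vartheta_i(\,\cdot\,,\varOmega)$ is; hence $\tau$ preserves the partition of $\{0,\dots,63\}$ into $I_{\rm even}$ and $I_{\rm odd}$.

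Next I would combine the theta transformation formula (Theorem \ref{trans}) with the sign relation $\theta\bigl[\begin{smallmatrix}a+p\\b+q\end{smallmatrix}\bigr](z,\varOmega)=(-1)^{4\,{}^t\!aq}\theta\bigl[\begin{smallmatrix}a\\b\end{smallmatrix}\bigr](z,\varOmega)$ (for $p,q\in\mathbb{Z}^3$) to produce, for every $i$, an identity $\vartheta_{\tau(i)}(M.\varOmega)=u_i\cdot\vartheta_i(\varOmega)$ in which $u_i$ is the product of the nonzero constant $c$ of Theorem \ref{trans}, the root of unity $e^{\pi i k(M,a,b)}$ determined by \eqref{scalar}, and the sign $\pm1$ arising from the reduction of the characteristics modulo $\mathbb{Z}^3$. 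As $u_i\in\mathbb{C}^\times$, we conclude $\vartheta_{\tau(i)}(M.\varOmega)=0\iff\vartheta_i(\varOmega)=0$; restricting $\tau$ to $I_{\rm even}$ (which is legitimate by the previous paragraph), we obtain a bijection of $S_{\rm van}(\varOmega)$ onto $S_{\rm van}(M.\varOmega)$, and therefore $N_{\rm van}(\varOmega)=N_{\rm van}(M.\varOmega)$.

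The one point that genuinely requires attention --- though it remains routine --- is the bookkeeping in the second step: verifying that reducing $M.a,M.b$ modulo $\mathbb{Z}^3$ really yields a bijection of the $64$ characteristics (and does not collapse any of them), and that each scalar factor contributing to $u_i$ is nonzero. Both are immediate consequences of results already available (invertibility of $M$ over $\mathbb{Z}$; $c\in\mathbb{C}^\times$ in Theorem \ref{trans}; $e^{\pi i k}$ and $\pm1$ are units), so no new computation is needed and the lemma is in essence a formal corollary of the transformation formula. A useful byproduct is that $\tau$ depends only on $M$ and not on $\varOmega$, which is convenient when one later tracks how $S_{\rm van}$ varies along the edges of the Richelot isogeny graph.
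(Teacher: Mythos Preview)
Your argument is correct and coincides with the paper's own proof, which simply states that the lemma follows from Theorem~\ref{trans} together with Proposition~\ref{isom}. You have merely unpacked the details (the bijection on characteristics, the preservation of parity, and the nonvanishing of the scalar $u_i$) that the paper leaves implicit.
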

\begin{proof}
This follows from Theorem \ref{trans} together with Proposition \ref{isom}.
\end{proof}

Then, we can identify by $N_{\rm van}$ which of the above 4 types $A$ corresponds to (note that this does not depend on a period matrix $\varOmega$ of $A$ by the above lemma).
\begin{Prop}\label{van_num}
The possible values of $N_{\rm van}$ are $0,1,6,9$ and moreover\vspace{0.5mm}
\[
    \left\{
        \begin{array}{l}
        N_{\rm van} =0\ \Leftrightarrow\,A \cong {\rm Jac}(C)\ \text{where}\ C\ \text{is\ a\ smooth\ plane\ quartic},\\[0.5mm]
        N_{\rm van} =1\ \Leftrightarrow\,A \cong {\rm Jac}(C)\ \text{where}\ C\ \text{is\ a\ hyperelliptic\ curve\ of\ genus\ } 3,\\[0.5mm]
        N_{\rm van} =6\ \Leftrightarrow\,A \cong E \times {\rm Jac}(C)\ \text{where}\ E\ \text{and}\ C\text{\ is\ a genus-1 and genus-2 curve},\\[0.5mm]
        N_{\rm van} =9\ \Leftrightarrow\,A \cong E_1 \times E_2 \times E_3\ \text{where}\ E_1,E_2 \text{\ and\ } E_3 \text{\ are\ elliptic\ curves}.
        \end{array}
    \right.
\]
\end{Prop}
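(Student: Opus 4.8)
The plan is to compute $N_{\rm van}$ separately for a representative of each of the four types — this is legitimate because $N_{\rm van}$ is an isomorphism invariant by Lemma~\ref{invariant} — and then to observe that the four resulting values are distinct, so that the classification \cite[Corollary 11.8.2]{Lange} quoted above turns each computation into one of the four displayed equivalences.

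For Types~1 and 2, where $A = {\rm Jac}(C)$ with $C$ a smooth genus-$3$ curve, I would use the classical dictionary between theta constants and theta characteristics. By the Riemann singularity theorem (together with the standard identification of the theta divisor with a translate of $W_{g-1}$), for an even index $i \in I_{\rm even}$ the constant $\vartheta_i$ vanishes if and only if the associated theta characteristic $L_i$ — a square root of $K_C$, of degree $g-1 = 2$ — satisfies $h^0(C,L_i) \geq 1$; since $h^0(C,L_i)$ has the same parity as the characteristic, for even $i$ this is equivalent to $h^0(C,L_i) \geq 2$. Now a degree-$2$ line bundle $L$ on $C$ with $h^0(C,L) \geq 2$ is necessarily base-point free (a base point would yield a degree-$1$ bundle carrying a pencil of sections, impossible for $g \geq 1$) and hence defines a $g^1_2$, so $C$ is hyperelliptic; conversely, on a hyperelliptic genus-$3$ curve the hyperelliptic pencil $g^1_2$ is the unique such bundle, it is a theta characteristic since $K_C = 2\,g^1_2$, and it is even because $h^0 = 2$. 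Therefore $N_{\rm van} = 0$ in Type~1 and $N_{\rm van} = 1$ in Type~2.

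For Types~3 and 4 I would pass to a block-diagonal period matrix — $\varOmega = {\rm diag}(\tau,\varOmega_2)$ with $\tau \in \mathcal{H}_1$ and $\varOmega_2 \in \mathcal{H}_2$ the period matrix of a genus-$2$ Jacobian in Type~3, and $\varOmega = {\rm diag}(\tau_1,\tau_2,\tau_3)$ in Type~4 — and apply Lemma~\ref{diagonal} to factor each $\vartheta_i$ as a product of lower-dimensional theta constants. For an elliptic curve the three even theta constants are nonzero while the unique odd one vanishes, and for a genus-$2$ Jacobian the same argument as above (Riemann singularity plus the absence of a $g^1_2$, now in genus $2$) shows that the ten even theta constants are nonzero and the six odd ones vanish. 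A product of such factors vanishes exactly when one of the factors is an odd characteristic of its own factor, so it remains to count such indices inside $I_{\rm even}$. Writing a genus-$3$ characteristic through its coordinate pairs $(a_j,b_j) \in \{0,1/2\}^2$, evenness means an even number of the pairs equal $(1/2,1/2)$. In Type~4 the vanishing even characteristics are precisely those with exactly two pairs equal to $(1/2,1/2)$, of which there are $\binom{3}{2}\cdot 3 = 9$, giving $N_{\rm van} = 9$; in Type~3 they are precisely those with $(a_1,b_1) = (1/2,1/2)$ and exactly one of $(a_2,b_2),(a_3,b_3)$ equal to $(1/2,1/2)$, of which there are $2 \cdot 3 = 6$, giving $N_{\rm van} = 6$. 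Since $0,1,6,9$ are distinct, the four equivalences follow.

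The step I expect to be most delicate is the combinatorial count for Types~3 and 4: one must verify that the product-vanishing count agrees with the explicitly listed set $I_{\rm even}$ and, more importantly, that \emph{no accidental vanishing} can occur for special elliptic curves or special genus-$2$ Jacobians — this is exactly why it matters that the relevant lower-genus even theta constants are nonzero for \emph{every} such factor, not merely a generic one. Stating the theta-characteristic dictionary and the parity equivalence correctly in the Type-1/Type-2 argument is the other point that requires care.
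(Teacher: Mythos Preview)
Your argument is correct and is more self-contained than the paper's own proof. The paper disposes of the first assertion by citing \cite[Theorem~3.1]{Glass}, handles Types~1 and~2 by citing \cite{Oyono} and \cite{Pieper}, and for Types~3 and~4 uses the same diagonalization via Lemma~\ref{diagonal} that you use, but then invokes the known fact (from \cite{LR} or \cite{Robert}) that a principally polarized abelian \emph{surface} has $0$ or $1$ vanishing even theta constants according as it is simple or not, rather than carrying out your explicit parity count. Your route via the Riemann singularity theorem for Types~1 and~2 and the direct combinatorial count for Types~3 and~4 supplies the content behind those citations and yields the list $\{0,1,6,9\}$ as a corollary of the classification rather than as a separate input; what it buys is independence from the cited sources at the cost of spelling out a standard theta-characteristic argument.

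One small slip to fix: in your genus-$2$ parenthetical you write ``absence of a $g^1_2$'', but every genus-$2$ curve is hyperelliptic and so certainly has a $g^1_2$. What your argument actually uses is that a degree-$(g-1)=1$ bundle on a curve of positive genus cannot have $h^0 \geq 2$ --- i.e.\ the absence of a $g^1_1$ --- and this is what forces all ten even genus-$2$ theta constants to be nonzero. With that correction your Type~3 count (and hence the whole proof) goes through.
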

\begin{proof}
The first assertion follows from \cite[Theorem 3.1]{Glass}\ (the cases where $N_{\rm van} > 9$ correspond to singular abelian threefolds).
Also, for the second assertion, the cases where $N_{\rm van}=0,1$ follows from \cite[Theorem 2]{Oyono} or \cite[Remark 6.3]{Pieper}.
% Note that these results are proved over $\mathbb{C}$, but valid over an algebraically closed field of characteristic $\neq 2$ as stated in Pieper's paper~\cite[Remark 6.3]{Pieper}.
In the following, we show the remaining cases (i.e., $N_{\rm van} = 6,9$).\par
Let $A$ be a principally polarized abelian threefold isomorphic to the (principally polarized) product of an elliptic curve $A_1$ and an abelian surface $A_2$.
Denote $\varOmega_1,\varOmega_2$, and $\varOmega$ to be the period matrices of $A_1,A_2$, and $A$ respectively. 
Theorem \ref{isom} shows that there exists $M \in {\rm Sp}_6(\mathbb{Z})$ such that $M.\varOmega = {\rm diag}(\varOmega_1,\varOmega_2)$, hence we may assume that $\varOmega = {\rm diag}(\varOmega_1,\varOmega_2)$ using Lemma \ref{invariant}.
Then, it follows from Lemma \ref{diagonal} that\smallskip
\[
    \theta\bigl[\begin{smallmatrix}a_1 & a_2 & a_3\\b_1 & b_2 & b_3\end{smallmatrix}\bigr](z,\varOmega) = \theta\bigl[\begin{smallmatrix}a_1\\b_1\end{smallmatrix}\bigr](z_1,\varOmega_1)\theta\bigl[\begin{smallmatrix}a_2 & a_3\\b_2 & b_3\end{smallmatrix}\bigr](z_2,\varOmega_2), \quad z \coloneqq (z_1,z_2).\medskip
\]
As known in \cite[Remark 3.5]{LR} or \cite[$\S 16.4$]{Robert}, a simple (resp. not simple) principally polarized abelian surface has no (resp. exactly one) vanishing even theta constant.
This shows that $N_{\rm van} = 6$ if $A_2$ is simple and $N_{\rm van} = 9$ otherwise, as desired.
\end{proof}

In the rest of this subsection, suppose that $A$ corresponds to a genus-3 curve $C$.
Let us review how to restore a defining equation of $C$ from theta constants of $A$.
\begin{Prop}\label{hyp_restore}
If $S_{\rm van} = \{61\}$ holds, then $A$ is isomorphic to the Jacobian of a genus-3 hyperelliptic curve $C: y^2 = x(x-1)\prod_{i=1}^5(x-\lambda_i)$ where
\begin{equation}\label{Rosenhain}
    \lambda_1 \coloneqq \frac{\vartheta_{42}^2\vartheta_2^2}{\vartheta_5^2\vartheta_{45}^2},\ \,\lambda_2 \coloneqq \frac{\vartheta_{42}^2\vartheta_3^2}{\vartheta_4^2\vartheta_{45}^2},\ \,\lambda_3 \coloneqq \frac{\vartheta_{27}^2\vartheta_{42}^2}{\vartheta_{45}^2\vartheta_{28}^2},\ \,\lambda_4 \coloneqq \frac{\vartheta_2^2\vartheta_{49}^2}{\vartheta_{54}^2\vartheta_5^2},\ \,\lambda_5 \coloneqq \frac{\vartheta_3^2\vartheta_0^2}{\vartheta_7^2\vartheta_4^2}.
\end{equation}
\end{Prop}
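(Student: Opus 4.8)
\ Since $S_{\rm van}=\{61\}$ forces $N_{\rm van}=1$, Proposition \ref{van_num} already gives that $A\cong{\rm Jac}(C)$ for some genus-$3$ hyperelliptic curve $C$. Sending one branch point to $\infty$ and two more to $0$ and $1$ by a M\"obius transformation, we may write $C:y^{2}=x(x-1)\prod_{i=1}^{5}(x-\mu_{i})$, so the content of the proposition is that the five parameters $\mu_{i}$ may be chosen equal to the expressions $\lambda_{i}$ in \eqref{Rosenhain}. The plan is to evaluate the squared theta constants on the left of \eqref{Rosenhain} by Thomae's formula and match them against cross-ratios of the eight branch points.

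First I would set up the classical combinatorial dictionary (see \cite{Igusa}) between theta characteristics of a genus-$3$ curve and subsets of the branch-point index set $B=\{1,\dots,8\}$, under which the $\binom{8}{4}/2=35$ unordered partitions $B=T\sqcup T^{c}$ with $|T|=4$ are in bijection with the $35$ non-vanishing even theta constants of ${\rm Jac}(C)$, the single vanishing even theta constant being the remaining even characteristic. The hypothesis $S_{\rm van}=\{61\}$ is precisely what pins this dictionary down concretely: it fixes the reference characteristic (equivalently, a symplectic basis), hence a definite $4$-subset $T_{i}\subset B$ attached to each index $i$ that occurs in \eqref{Rosenhain}. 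Ordering the branch points of $C$ as $(a_{1},\dots,a_{8})=(0,1,\infty,\mu_{1},\dots,\mu_{5})$ compatibly with this labeling renders the identity to be proved fully explicit.

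Next I would apply Thomae's formula, which in the present setting reads
\[
\vartheta_{i}^{\,4}\;=\;\pm\,\kappa\cdot\!\!\prod_{\substack{j<k\\ j,k\in T_{i}}}\!\!(a_{j}-a_{k})\ \prod_{\substack{j<k\\ j,k\in T_{i}^{c}}}\!\!(a_{j}-a_{k})
\]
with $\kappa$ a nonzero constant depending only on the chosen period matrix and not on $i$ (and with the usual convention dropping factors $a_{j}-\infty$). In each ratio of \eqref{Rosenhain} the four copies of $\kappa$ cancel between numerator and denominator, and the key combinatorial point is that the four subsets $T_{i}$ involved are arranged so that the remaining product of differences $a_{j}-a_{k}$ is a perfect square, whose square root is exactly the cross-ratio of four of the $a$'s that specializes to $\mu_{\ell}$ in the normalization $(0,1,\infty,\mu_{1},\dots,\mu_{5})$. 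The sign of this square root --- the only leftover ambiguity, as $\vartheta_{i}^{2}$ itself is well-defined --- would be fixed either by consistency with the quadratic relations among theta constants furnished by the duplication formula (Theorem \ref{duplication}) and Riemann's theta formula (Theorem \ref{Riemann}), or by checking one explicit curve, which suffices because every quantity here is a rational function of the $\vartheta_{i}^{2}$. Finally, invariance of both sides of \eqref{Rosenhain} under $\varGamma_{2,4}$ (Corollary \ref{Gamma}) shows the identity is independent of the residual choice of $\varOmega$.

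The conclusion is then that the cross-ratios of the branch points of $C$ coincide with those prescribed by \eqref{Rosenhain}, whence $C\cong\{y^{2}=x(x-1)\prod_{i=1}^{5}(x-\lambda_{i})\}$ and $A\cong{\rm Jac}(C)$ with this equation. I expect the real work --- the main obstacle --- to lie in the bookkeeping of the second and third steps: verifying that, under the dictionary anchored at the index $61$, the twelve indices $0,2,3,4,5,7,27,28,42,45,49,54$ indeed correspond to the twelve $4$-subsets of $\{1,\dots,8\}$ whose Thomae products multiply, in each of the five groupings, to the square of the intended cross-ratio, and following the signs through the square roots. This is a finite but delicate computation, most safely done (and cross-checked) symbolically, rather than a conceptual difficulty.
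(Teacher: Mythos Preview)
Your proposal is mathematically sound: the route through the Thomae formula and the combinatorial dictionary between even theta characteristics and $4$-subsets of the branch locus is exactly how identities of Rosenhain type are established in genus $g\ge 2$, and the reduction of the sign ambiguity via the quadratic theta relations (or a single specialization, invoking irreducibility of the relevant moduli space) is the standard way to pass from Thomae's fourth-power statement to the squared-constant formulas in \eqref{Rosenhain}.

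That said, the paper does not carry out any of this. Its proof is a bare citation: it invokes Takase's generalized Rosenhain formula \cite[Theorem 1.1]{Takase} specialized to $g=3$, or alternatively \cite[Theorem 4]{Shaska}, and stops there. What you have sketched is, in effect, the content of those references rather than an alternative to the paper's argument. So your approach is not wrong, but it is a from-scratch rederivation where the paper is content to quote an existing theorem; the tedious index-matching you flag as the ``main obstacle'' is precisely the computation already tabulated in \cite{Takase} and \cite{Shaska}, and there is no need to redo it unless you want an independent check.
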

\begin{proof}
Apply \cite[Theorem 1.1]{Takase} for $g=3$, or see \cite[Theorem 4]{Shaska}.
\end{proof}

On the other hand, even in the case that $C$ is a plane quartic, we can also restore a defining equation of $C$ as follows: taking a period matrix $\varOmega \in \mathcal{H}_3$ of $A = {\rm Jac}(C)$, it follows from Proposition \ref{van_num} that $\vartheta_i \neq 0$ holds for all $i \in\hspace{-0.3mm} I_{\rm even}$.
Then, we define the following non-zero values
\begin{alignat}{3}\label{aij}
    a_{11} &\coloneqq\frac{\vartheta_{12}\vartheta_5}{\vartheta_{40}\vartheta_{33}}, & \quad a_{21} &\coloneqq \frac{\vartheta_{27}\vartheta_5}{\vartheta_{40}\vartheta_{54}}, & \quad a_{31} &\coloneqq -\frac{\vartheta_{27}\vartheta_{12}}{\vartheta_{33}\vartheta_{54}}, \nonumber\\
    a_{12} &\coloneqq \frac{\vartheta_{21}\vartheta_{28}}{\vartheta_{49}\vartheta_{56}}, & \quad a_{22} &\coloneqq \frac{\vartheta_2\vartheta_{28}}{\vartheta_{49}\vartheta_{47}}, & \quad a_{32} &\coloneqq \frac{\vartheta_2\vartheta_{21}}{\vartheta_{56}\vartheta_{47}},\\
    a_{13} &\coloneqq \frac{\vartheta_7\vartheta_{14}}{\vartheta_{35}\vartheta_{42}}, & \quad
    a_{23} &\coloneqq \frac{\vartheta_{16}\vartheta_{14}}{\vartheta_{35}\vartheta_{61}}, & \quad a_{33} &\coloneqq \frac{\vartheta_{16}\vartheta_7}{\vartheta_{42}\vartheta_{61}}\nonumber\\[-4.5mm]\nonumber
\end{alignat}
computed as in \cite[p.\,15]{Fiorentino}.
Then, there exist $k_1,k_2,k_3,\lambda_1,\lambda_2,\lambda_3 \in \mathbb{C}^\times$ such that
\begin{equation}\label{matrix}
    \begin{pmatrix}
    \lambda_1a_{11} & \lambda_2a_{21} & \lambda_3a_{31}\\
    \lambda_1a_{12} & \lambda_2a_{22} & \lambda_3a_{32}\\
    \lambda_1a_{13} & \lambda_2a_{23} & \lambda_3a_{33}
    \end{pmatrix}\!\!
    \begin{pmatrix}
    k_1\\
    k_2\\
    k_3
    \end{pmatrix} \!=\! 
    \begin{pmatrix}
    1/a_{11} & 1/a_{21} & 1/a_{31}\\
    1/a_{12} & 1/a_{22} & 1/a_{32}\\
    1/a_{13} & 1/a_{23} & 1/a_{33}
    \end{pmatrix}\!\!
    \begin{pmatrix}
    \lambda_1\\
    \lambda_2\\
    \lambda_3
    \end{pmatrix} \!=\!
    \begin{pmatrix}
    -1\\
    -1\\
    -1
    \end{pmatrix}.
\end{equation}
\begin{Thm}[Weber's formula]\label{Weber}
In the above situation, there exist homogeneous linear polynomials $\xi_1,\xi_2,\xi_3 \in \mathbb{C}[x,y,z]$ such that
\[
    \left\{
    \begin{array}{l}
        \xi_1+\xi_2+\xi_3+x+y+z = 0,\\
        \!\displaystyle\frac{\xi_1}{a_{i1}} + \frac{\xi_2}{a_{i2}} + \frac{\xi_3}{a_{i3}} + k_i(a_{i1}x + a_{i2}y + a_{i3}z) = 0, \quad {\rm for\ all}\ i \in \{1,2,3\}.
    \end{array}
    \right.
\]
Then $A$ is isomorphic to the Jacobian of a plane quartic
\[
    C: (\xi_1x + \xi_2y - \xi_3z)^2 - 4\xi_1\xi_2xy = 0.
\]
\end{Thm}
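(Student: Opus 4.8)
The plan is to reduce the statement to the classical theory, due essentially to Weber and Riemann, relating the $28$ bitangents of a smooth plane quartic to the $28$ odd theta characteristics, and then to check that the particular products of theta constants occurring in \eqref{aij} and \eqref{matrix} are exactly those produced by this dictionary. We work over $\mathbb{C}$, as in the rest of this subsection, and we fix a period matrix $\varOmega$ of $A$. Since $N_{\rm van}=0$, the threefold $A$ is the Jacobian of the canonical model $C\subset\mathbb{P}^2$ of the (unique, by Torelli) non-hyperelliptic genus-$3$ curve with $\mathrm{Jac}(C)\cong A$, and the $28$ bitangent lines of $C$ correspond canonically to the $28$ odd theta characteristics. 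By the classical Weber/Riemann formulas in the normalization recorded in \cite{Fiorentino}, the equation of each bitangent has coefficients that, up to a common scalar, are products of two theta constants $\vartheta_i$ with $i\in I_{\rm even}$; the quantities $a_{ij}$ of \eqref{aij} are exactly such ratios of coefficients for six suitably chosen bitangents. One then uses that seven bitangents forming an Aronhold set determine $C$, and that coordinates can be normalized so that three of them are the lines $x,y,z$ while three others are the linear forms $\xi_1,\xi_2,\xi_3$; placing $C$ in the stated normal form $(\xi_1x+\xi_2y-\xi_3z)^2-4\xi_1\xi_2xy=0$ — which, after clearing square roots, is a Riemann model $\sqrt{\xi_1x}+\sqrt{\xi_2y}+\sqrt{\xi_3z}=0$ — amounts precisely to producing scalars $\lambda_i,k_i\in\mathbb{C}^\times$ solving \eqref{matrix} together with linear forms $\xi_i$ solving the displayed linear system.

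The genuine steps are then the following. (i) Show that the two $3\times3$ matrices in \eqref{matrix} are invertible, so that $k_1,k_2,k_3$ and $\lambda_1,\lambda_2,\lambda_3$ exist and are non-zero: since $N_{\rm van}=0$ forces $\vartheta_i\neq0$ for all $i\in I_{\rm even}$, this reduces to the non-vanishing of certain $3\times3$ determinants in the $a_{ij}$, which follows from identities among theta constants coming from Riemann's theta formula (Theorem \ref{Riemann}) and the duplication formula (Theorem \ref{duplication}). (ii) Show that the overdetermined linear system for $(\xi_1,\xi_2,\xi_3)$ is consistent and its solution defines a smooth quartic, using the relation between the $a_{ij}$ and the $k_i$ imposed by \eqref{matrix}. (iii) Verify that the quartic so obtained really has $A$ as its Jacobian; for this one checks that the construction is inverse to the map sending a plane quartic to its level-$4$ theta null-point, so that feeding in the theta null-point of $A$ returns a curve projectively equivalent to $C$. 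In (iii) the transformation formula (Theorem \ref{trans}), together with Torelli and the injectivity of the theta null-point on the relevant moduli space, gives the claim.

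I expect the main obstacle to be steps (i) and (ii): proving \emph{unconditionally} that \eqref{matrix} is solvable with all of $k_i,\lambda_i$ non-zero and that the resulting system for the $\xi_i$ is consistent with non-degenerate solution. This is exactly the substantive content of Weber's formula, so rather than re-deriving it I would appeal to the classical references (\cite{Fiorentino}, and ultimately Weber's original treatment and Riemann's model of the quartic), checking only that the convention for theta characteristics used there matches Definition \ref{notation} — a routine application of Theorem \ref{trans}. With that in place the remaining assertions follow formally.
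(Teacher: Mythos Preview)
Your proposal is correct and ultimately takes the same approach as the paper: the paper's proof consists solely of the citation ``See \cite[\S 15]{Weber}, \cite[Theorem 3.1]{NR}, or \cite[Proposition 2]{Fiorentino}'', and your argument, after sketching the classical bitangent/Aronhold/Riemann-model picture, likewise defers the substantive steps (i)--(ii) to exactly these references. The only difference is that you spell out what those references contain, which is helpful exposition but not a different proof.
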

\begin{proof}
See \cite[$\S 15$]{Weber}, \cite[Theorem 3.1]{NR}, or \cite[Proposition 2]{Fiorentino}.
\end{proof}

\subsection{Superspecial Richelot isogeny graph}\label{graph}
Unlike the previous subsections, let us consider abelian varieties over an algebraically closed field of characteristic $p \geq 3$ in this subsection.
Let us review the definition of a superspecial curve:
\begin{Def}
An abelian variety $A$ is called \emph{superspecial} if $A$ is isomorphic to the product of supersingular elliptic curves (without a polarization).
A superspecial curve is a curve whose Jacobian is a superspecial abelian variety.
\end{Def}

We note that if two curves $C,C'\hspace{-0.3mm}$ are not isomorphic to each other, then neither are their Jacobians (as principally polarized abelian varieties), by Torelli's theorem.
Let $\Lambda_{g,p}$ be the list of isomorphism classes (over $\overline{\mathbb{F}}_p$) of superspecial genus-$g$ curves.
The numbers $\hspace{-0.1mm}\#\Lambda_{g,p}\hspace{-0.1mm}$ for $g \leq 3$ are given by \cite{Deuring}, \cite{IKO}, and \cite{Hashimoto}, respectively (see \cite{Kudo} for a survey).
However, it is not known how many of those curves are hyperelliptic for genus $g=3$.
In the following, we explain how to generate the list $\Lambda_{g,p}$, by using the isogeny graph of superspecial abelian varieties:
\begin{Def}
Let $\ell$ be a prime integer.
The \emph{superspecial isogeny graph} $\mathcal{G}_g(\ell,p)$ is the directed multigraph with the following properties:
\begin{itemize}
\item Its vertices are isomorphism classes of all principally polarized superspecial abelian varieties of dimension $g$ over $\overline{\mathbb{F}}_p$.
\item Its edges are isomorphism classes of all $(\ell,\ldots,\ell)$-isogenies over $\overline{\mathbb{F}}_p$ between two vertices (isogenies $\phi: A \rightarrow B$ and $\psi: A' \rightarrow B'$ are isomorphic if {there exists an isomorphism $\beta: B \rightarrow B'$ such that $\psi = \beta \circ \phi$}).
\end{itemize}
Then, the graph $\mathcal{G}_g(\ell,p)$ is an $N_g(\ell)$-regular multigraph by Proposition \ref{maxisonum}. 
\end{Def}

The fact that the Pizer's graph $\mathcal{G}_1(\ell,p)$ is connected is well-known (cf. \cite{Pizer}), but more surprisingly, the same assertion holds for $\mathcal{G}_g(\ell,p)$ as a  generalization:\vspace{-0.5mm}
\begin{Thm}\label{connected}
The graph $\mathcal{G}_g(\ell,p)$ is connected for each $\ell \neq p$.\vspace{-0.5mm}
\end{Thm}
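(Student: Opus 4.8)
The plan is to deduce the connectedness of $\mathcal{G}_g(\ell,p)$ from the classical case $g=1$ (Pizer) together with the theory of the superspecial locus in the moduli of principally polarized abelian varieties. First I would recall the key structural input: every superspecial principally polarized abelian variety $A$ of dimension $g$ in characteristic $p$ admits, as an \emph{unpolarized} abelian variety, an isomorphism $A \cong E^g$ for a fixed supersingular elliptic curve $E$; the principal polarizations on $E^g$ correspond, up to isomorphism, to the $\mathrm{GL}_g(\mathcal{O})$-equivalence classes of certain positive-definite unimodular Hermitian lattices over the maximal order $\mathcal{O}$ in the quaternion algebra $B_{p,\infty}$. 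Thus the vertex set of $\mathcal{G}_g(\ell,p)$ is finite (as already used for $\Lambda_{g,p}$) and has a description in terms of quaternionic lattices.

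The main step is then a reduction of connectivity of $\mathcal{G}_g(\ell,p)$ to connectivity of $\mathcal{G}_1(\ell,p)$. The idea is: given any two vertices $A, A'$, first show that from $A = (E^g, \Theta)$ one can reach, by a path of $(\ell,\dots,\ell)$-isogenies, the \emph{split} polarized variety $(E^g, \Theta_0^{\times g})$ where $\Theta_0$ is the canonical principal polarization on $E$; by symmetry the same holds for $A'$, so it suffices to connect two split products. For the split-to-split step one runs $\ell$-isogenies coordinate by coordinate: an $\ell$-isogeny $E \to E''$ in $\mathcal{G}_1(\ell,p)$ induces an $(\ell,1,\dots,1)$-isogeny of polarized abelian varieties, and composing $g$ such steps (or more precisely assembling maximal isotropic subgroups of $A[\ell]$ that are products of order-$\ell$ subgroups of the factors) gives an $(\ell,\dots,\ell)$-isogeny; connectivity of Pizer's graph then lets us move each factor independently, so all split products are mutually connected. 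For the arbitrary-to-split step, the argument is that any principal polarization $\Theta$ on $E^g$ can be ``diagonalized after an $\ell$-isogeny'': using the Hermitian-lattice dictionary, one finds a maximal isotropic $G \subseteq A[\ell]$ whose quotient polarization has strictly smaller ``complexity'' (e.g. a suitable invariant of the Hermitian form, or the minimum of the associated lattice), and iterates; since there are only finitely many vertices this terminates at a split product. Alternatively, one may invoke that the superspecial locus $\mathcal{S}_g \subset \mathcal{A}_g \otimes \overline{\mathbb{F}}_p$ is contained in a single $\mathrm{Sp}_{2g}$-Hecke orbit — this is precisely the statement that, for some $\ell$, the $\ell$-isogeny graph on $\mathcal{S}_g$ is connected — and then a standard argument (the Hecke operators at different primes generate a ``strongly approximated'' action, so connectivity at one prime $\ell_0$ forces it at every $\ell \neq p$) upgrades this to all $\ell$.

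Concretely I would structure the write-up as: (1) cite the unpolarized isomorphism $A \cong E^g$ for superspecial $A$ and the lattice description of polarizations (Ibukiyama–Katsura–Oort type results); (2) prove the coordinate-wise lemma that Pizer-connectedness of $\mathcal{G}_1(\ell,p)$ implies all split products $(E^g,\Theta_0^{\times g})$ over all supersingular $E$ form one connected subset of $\mathcal{G}_g(\ell,p)$; (3) prove the reduction lemma that every vertex is $(\ell,\dots,\ell)$-connected to some split product, via a descent on a lattice invariant; (4) conclude. The independence of $\ell$ (Theorem as stated is for \emph{each} $\ell \neq p$) is handled either by carrying out step (3) uniformly in $\ell$, or by the strong-approximation argument that connectivity for one $\ell$ propagates to all.

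The hard part will be step (3): showing that from \emph{any} superspecial principal polarization one can descend, via an $(\ell,\dots,\ell)$-isogeny, to one that is ``closer to split.'' This is where the quaternionic Hermitian-lattice combinatorics genuinely enter, and where one must be careful that the isotropic subgroup producing the descent has the right type (all elementary divisors equal to $\ell$) so that it yields a legitimate edge of $\mathcal{G}_g(\ell,p)$ rather than a more general isogeny. Controlling this for all $\ell \neq p$ simultaneously — rather than just for a single convenient $\ell$ — is the subtle point, and is the reason one may prefer to prove connectivity for one auxiliary prime and then transfer via the action of the full Hecke algebra.
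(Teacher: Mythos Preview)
The paper does not prove this statement at all: it simply cites \cite[Theorem~34]{JZ} and \cite[Corollary~2.8]{ATY}. So there is no ``paper's approach'' to compare against beyond noting that the result is taken as a black box.

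Your outline is in the right spirit, and in fact your alternative route via strong approximation and Hecke orbits is essentially what Jordan--Zaytman do: they identify the superspecial locus with a double-coset space for a quaternionic unitary group and deduce connectivity from strong approximation for that group, which simultaneously handles all $\ell\neq p$. Aikawa--Tanaka--Yamauchi instead go through the Bruhat--Tits building for $\mathrm{Sp}_{2g}(\mathbb{Q}_\ell)$. If you want a self-contained proof, either of those references is the model to follow.

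Your \emph{primary} plan, however, has a genuine gap at step~(3). You propose to descend from an arbitrary principal polarization on $E^g$ to a split one by iteratively passing to a quotient by a maximal isotropic $G\subset A[\ell]$ that decreases some ``lattice invariant,'' but you give no candidate invariant and no argument that such a $G$ of the correct shape (all elementary divisors equal to $\ell$) exists. That existence statement \emph{is} the theorem: for $g\geq 2$ it is not at all obvious that one can always make progress toward a split polarization by a single $(\ell,\dots,\ell)$-step, and naive invariants (minimum of the Hermitian form, etc.) need not behave monotonically. You correctly flag this as the hard part, but as written it is not a proof sketch so much as a hope. There is also a small wrinkle in step~(2): a single $(\ell,\dots,\ell)$-isogeny of a split product necessarily changes \emph{all} $g$ factors at once, so ``moving each factor independently'' requires a two-step there-and-back using dual isogenies on the factors you wish to fix; this is easily repaired, but should be said explicitly.
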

\begin{proof}
See \cite[Theorem 34]{JZ} or \cite[Corollary 2.8]{ATY}.
\end{proof}\smallskip

In the following, {we fix $\ell = 2$, in which case the graph} $\mathcal{G}_g(2,p)$ is often called the \emph{superspecial Richelot isogeny graph} of dimension $g$.
The list $\Lambda_{2,p}$ can be generated by using $\mathcal{G}_2(2,p)$ as in the following pseudocode (cf. \cite[Algorithm 5.1]{KHH}).\vspace{-1.1mm}

\begin{algorithm}[htbp]
\begin{algorithmic}[1]
\caption{{\sf ListSspGenus2Curves}}\label{ListUpSSpGenus2Curves}
\renewcommand{\algorithmicrequire}{\textbf{Input:}}
\renewcommand{\algorithmicensure}{\textbf{Output:}}
\Require A prime integer $p > 5$ and the list $\Lambda_{1,p}$ of isomorphism classes of supersingular elliptic curves
\Ensure The list $\Lambda_{2,p}$ of isomorphism classes of superspecial genus-2 curves
\State Let $\mathcal{L}$ be the list of isomorphism classes of $E_1 \times E_2$ with $E_1,E_2 \in \Lambda_{1,p}$
\State Set $k \leftarrow 1$
\Repeat
\State Let $A$ be the $k$-th abelian surface of $\mathcal{L}$
\ForAll{abelian surfaces $A'$ which are $(2,2)$-isogenous to $A$}
\If{$A'$ is not isomorphic to any abelian surface of $\mathcal{L}$}
\State Add $A'$ to the end of $\mathcal{L}$
\EndIf
\EndFor
\State Set $k \leftarrow k+1$
\Until{$k > \#\mathcal{L}$}\\
\Return the list of genus-2 curves whose Jacobian belongs to $\mathcal{L}$
\end{algorithmic}
\end{algorithm}\vspace{-0.7mm}

\noindent In Algorithm \ref{ListUpSSpGenus2Curves}, we can compute the codomains of Richelot isogenies of line 4 by using formulae in \cite[Proposition 4]{HLP} and \cite[Chapter 8]{Smith}.
In addition, isomorphism tests in line 5 can be done using \emph{Igusa invariants} of a genus-2 curve (cf. \cite{Igusa-inv}).\vspace{-0.5mm}
\begin{Rmk}
The structure of the graph $\mathcal{G}_2(2,p)$ was studied by many authors ({e.g., \cite{CDS},\,\cite{KT}, and \cite{FS2}}).
In particular, it is known that its subgraph of Jacobians is also connected from the {work of Florit-Smith}~\cite[Theorem 7.2]{FS1}.
% the graph $\mathcal{J}_g(\ell,p)$ is defined {as} % denotes the directed multigraph with the following properties
%\begin{itemize}
%\item {Its vertices} are isomorphism classes of Jacobians of all superspecial curves of genus $g$ over $\overline{\mathbb{F}}_p$.
%\item {Its edges} are {isomorphism classes of} all $(\ell,\ldots,\ell)$-isogenies defined over $\overline{\mathbb{F}}_p$ between two vertices.
%\end{itemize}
Their result may enable the construction of a more efficient algorithm for listing superspecial genus-2 curves {than} Algorithm \ref{ListUpSSpGenus2Curves}.
\end{Rmk}

Similarly to the case of $g=2$, we want to list superspecial curves of genus $g \geq 3$.
However, no explicit formula is known for the computations of $(2,\ldots,2)$-isogenies in general, unlike the genus-2 case.
In the next section, we will propose an algorithm for computing $(2,2,2)$-isogenies using theta functions.

\section{Computing the Richelot isogeny graph of dimension 3}\label{dim3}
The goal of this section is to provide an explicit algorithm for listing superspecial genus-3 curves.
For this purpose, we describe the following subroutines in the first three subsections:\smallskip
\begin{itemize}
\item[---] Section \ref{one}: Given a squared theta null-point of an abelian threefold $A$, we give an algorithm (Algorithm \ref{compute_isogeny}) for computing a squared theta null-point of \emph{one} abelian threefold $A'$ that is $(2,2,2)$-isogenous to $A$.\smallskip
\item[---] Section \ref{all}: Given a squared theta null-point of an abelian threefold $A$, we give an algorithm (Algorithm \ref{compute_isogenies}) for computing squared theta null-points of \emph{all} the abelian threefolds $A'$ that are $(2,2,2)$-isogenous to $A$.\smallskip
\item[---] Section \ref{restore}: Given a squared theta null-point of {$A$} which corresponds to a genus-3 curve, we give an algorithm (Algorithms \ref{restore_hyper} and \ref{restore_quartic}) to produce {a} defining equation for this curve.\smallskip
\end{itemize}
These computations use the theta constants introduced in Section \ref{g=3}, and remain valid over a field of odd characteristic (the details will be discussed in Section \ref{algebraic}).
In Section \ref{complete}, we present our main algorithm (Algorithm \ref{main}) and finally provide a proof of Theorem \ref{Main1}.

\subsection{Following an edge from a given vertex}\label{one}
In this subsection, we consider a complex abelian threefold $A$ with the period matrix $\varOmega \in \mathcal{H}_3$.
{First, we provide the following fact about a level-4 theta null-point of $A$:}
\begin{Lem}\label{01234567}
The equation
\[
    \vartheta_0\vartheta_1\vartheta_2\vartheta_3 - \vartheta_4\vartheta_5\vartheta_6\vartheta_7 = \vartheta_{32}\vartheta_{33}\vartheta_{34}\vartheta_{35}\smallskip
\]
holds with the same notation {as} in Definition \ref{notation}.
\end{Lem}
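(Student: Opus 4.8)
The plan is to obtain this identity as a specialization of Riemann's theta formula (Theorem~\ref{Riemann}) for $g=3$, applied to a judicious quadruple of characteristics. I would take $m_1,m_2,m_3,m_4$ to be the four characteristics $\bigl[\begin{smallmatrix}0&0&0\\ b_1&b_2&0\end{smallmatrix}\bigr]$ with $(b_1,b_2)$ running over $\{0,1/2\}^2$; in the notation of Definition~\ref{notation} these are exactly the characteristics of $\vartheta_0,\vartheta_1,\vartheta_2,\vartheta_3$, so the left-hand side of Riemann's formula is $\vartheta_0\vartheta_1\vartheta_2\vartheta_3$ (note that $m_1+m_2+m_3+m_4$ has integer entries, so the $n_k$ are genuine half-integer characteristics). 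Computing $n_1,\dots,n_4$ from the linear relation in Theorem~\ref{Riemann} shows that $\{n_1,n_2,n_3,n_4\}$ is again, up to permutation, the same quadruple of characteristics; in particular the summand with $\alpha=\beta=0$ on the right-hand side also equals $\vartheta_0\vartheta_1\vartheta_2\vartheta_3$.

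Next I would determine which of the $64$ summands $\prod_{k=1}^4\theta\bigl[n_k+\begin{psmallmatrix}\alpha\\\beta\end{psmallmatrix}\bigr](0,\varOmega)$ on the right are nonzero. Since $n_k+\begin{psmallmatrix}\alpha\\\beta\end{psmallmatrix}=\bigl[\begin{smallmatrix}\alpha_1&\alpha_2&\alpha_3\\ b_1+\beta_1&b_2+\beta_2&\beta_3\end{smallmatrix}\bigr]$, Lemma~\ref{even-odd} shows that a summand vanishes unless $4\bigl(\alpha_1(b_1+\beta_1)+\alpha_2(b_2+\beta_2)+\alpha_3\beta_3\bigr)$ is even for every $(b_1,b_2)\in\{0,1/2\}^2$. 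Letting $b_1$ and $b_2$ vary forces $\alpha_1=\alpha_2=0$, and the surviving term $4\alpha_3\beta_3$ then forces that $\alpha_3$ and $\beta_3$ are not both equal to $1/2$. Hence at most $12$ summands are nonzero: the $8$ with $\alpha=(0,0,0)$ and $\beta\in\{0,1/2\}^3$ arbitrary, and the $4$ with $\alpha=(0,0,1/2)$ and $\beta=(\beta_1,\beta_2,0)$.

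It remains to evaluate and add up these $12$ terms, reducing modulo $1$ any characteristic entry equal to $1$; the quasi-periodicity signs thereby incurred are all trivial, because the upper characteristics vanish in the coordinates being reduced, and likewise the sign $(-1)^{4{}^t a\beta}$ in Theorem~\ref{Riemann} is identically $+1$ for this choice of the $m_k$, since every characteristic in sight has vanishing upper part. For fixed $\alpha$ and fixed $\beta_3$, as $(\beta_1,\beta_2)$ runs over $\{0,1/2\}^2$ the four characteristics $n_k+\begin{psmallmatrix}\alpha\\\beta\end{psmallmatrix}$ always constitute the same quadruple up to permutation, so the product is independent of $(\beta_1,\beta_2)$: it equals $\vartheta_0\vartheta_1\vartheta_2\vartheta_3$ when $\alpha=(0,0,0),\ \beta_3=0$, it equals $\vartheta_4\vartheta_5\vartheta_6\vartheta_7$ when $\alpha=(0,0,0),\ \beta_3=1/2$, and it equals $\vartheta_{32}\vartheta_{33}\vartheta_{34}\vartheta_{35}$ when $\alpha=(0,0,1/2)$. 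Riemann's formula therefore becomes
\[
    \vartheta_0\vartheta_1\vartheta_2\vartheta_3=\tfrac18\bigl(4\,\vartheta_0\vartheta_1\vartheta_2\vartheta_3+4\,\vartheta_4\vartheta_5\vartheta_6\vartheta_7+4\,\vartheta_{32}\vartheta_{33}\vartheta_{34}\vartheta_{35}\bigr),
\]
and rearranging gives $\vartheta_0\vartheta_1\vartheta_2\vartheta_3-\vartheta_4\vartheta_5\vartheta_6\vartheta_7=\vartheta_{32}\vartheta_{33}\vartheta_{34}\vartheta_{35}$.

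There is no conceptual obstacle here; the only thing demanding care is the bookkeeping — forming the characteristics $n_k$ correctly, translating between characteristics and the indices $0,\dots,63$ of Definition~\ref{notation}, checking the parity conditions that eliminate the other $52$ summands, and verifying that no stray sign appears when half-integer characteristics are reduced modulo $1$.
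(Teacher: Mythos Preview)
Your proposal is correct and follows essentially the same approach as the paper: both apply Riemann's theta formula (Theorem~\ref{Riemann}) to the very same quadruple $m_1,\dots,m_4$ of characteristics $\bigl[\begin{smallmatrix}0&0&0\\ b_1&b_2&0\end{smallmatrix}\bigr]$, and the paper simply summarizes your detailed bookkeeping as ``the claim follows after tedious computations.'' Your explicit parity analysis and sign tracking are exactly what those tedious computations amount to.
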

\begin{proof}
{By applying Riemann's theta formula (Theorem \ref{Riemann}) to the four matrices}
\begin{alignat*}{3}
    m_1 &\coloneqq \begin{pmatrix}
    0 & 0 & 0\\
    0 & 0 & 0
    \end{pmatrix}, \quad & m_2 &\coloneqq \begin{pmatrix}
    0 & 0 & 0\\
    1/2 & 0 & 0
    \end{pmatrix},\\
    m_3 &\coloneqq \begin{pmatrix}
    0 & 0 & 0\\
    0 & 1/2 & 0
    \end{pmatrix}, \quad & m_4 &\coloneqq \begin{pmatrix}
    0 & 0 & 0\\
    1/2 & 1/2 & 0
    \end{pmatrix}\\[-4.7mm]
\end{alignat*}
the claim follows after tedious computations.
\end{proof}\medskip

Next, suppose that a squared theta null-point $[\vartheta_0^2:\vartheta_1^2:\cdots:\vartheta_{63}^2] \in \mathbb{P}^{63}(\mathbb{C})$ of $A$ is given.
Squaring both sides of the equation in Lemma \ref{01234567}, we have\smallskip
\begin{equation}\label{product}
    2\vartheta_0\vartheta_1\vartheta_2\vartheta_3\vartheta_4\vartheta_5\vartheta_6\vartheta_7 = \vartheta_0^2\vartheta_1^2\vartheta_2^2\vartheta_3^2 + \vartheta_4^2\vartheta_5^2\vartheta_6^2\vartheta_7^2 - \vartheta_{32}^2\vartheta_{33}^2\vartheta_{34}^2\vartheta_{35}^2,\medskip
\end{equation}
which means that $\prod_{j=0}^7 \vartheta_j$ can be determined by the given squared theta null-point.
On the other hand, we can take any square root of $\vartheta_j^2$ for all $j \in \{0,\ldots,7\}$ so that equation (\ref{product}) holds, by the following proposition:
\begin{Prop}\label{construct}
Suppose that $\vartheta_0 \neq 0$.
For an integer $k \in \{1,\ldots,6\}$, there exists a matrix $M_k \in {\varGamma_0(2)}$ such that
\[
    \frac{\vartheta_j(0,M_k.\varOmega)}{\vartheta_0(0,M_k.\varOmega)} = \left\{
    \begin{array}{ll}
        -\vartheta_j/\vartheta_0 \ & {\rm if}\ j = k\ {\rm or}\ 7,\\
        \vartheta_j/\vartheta_0 \ & {\rm otherwise}
    \end{array}
    \right.\smallskip
\]
for all integers $j \in \{1,\ldots,7\}$.
\end{Prop}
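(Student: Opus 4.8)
The plan is to construct each $M_k$ explicitly as a matrix in $\varGamma_0(2)$ and then read off its effect on the ratios $\vartheta_j/\vartheta_0$ using the theta transformation formula (Theorem~\ref{trans}), specialized to $z = 0$. Recall from Definition~\ref{notation} that the index $j \in \{0,\ldots,7\}$ corresponds to characteristics with $a_1 = a_2 = a_3 = 0$ and $(b_1,b_2,b_3)$ given by the binary digits of $j$ (since $j = 2b_1 + 4b_2 + 8b_3$ with the top bits vanishing). So the eight indices $0,\ldots,7$ are exactly the characteristics $\bigl[\begin{smallmatrix}0\\b\end{smallmatrix}\bigr]$ with $b \in \{0,1/2\}^3$; these are all even by Lemma~\ref{even-odd}. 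We want a symplectic transformation that fixes the bottom characteristic row $b$ up to the shift introduced by $M.b$ in \eqref{index} while permuting the associated theta ratios by the prescribed sign pattern.

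First I would look for $M_k$ of the block form $M_k = \begin{psmallmatrix}\alpha & \beta\\ 0 & \delta\end{psmallmatrix}$ — this automatically lies in $\varGamma_0(2)$ (indeed $\gamma = \mathbf{0}_g$), and symplecticity forces $\alpha\,^t\!\delta = \mathbf{1}_g$, so we may take $\alpha = \mathbf{1}_g$, $\delta = \mathbf{1}_g$, and $\beta$ any symmetric integer matrix. With $\gamma = 0$ the transformation laws \eqref{index} simplify: $M_k.a = a\,^t\!\delta + \tfrac12(\gamma\,^t\!\delta)_0 = a$ (which is $0$ here, as needed so that the codomain characteristics still have $a = 0$ and the indices stay in $\{0,\ldots,7\}$), and $M_k.b = b\,^t\!\alpha - a\,^t\!\beta + \tfrac12(\alpha\,^t\!\beta)_0 = b + \tfrac12\,\mathrm{diag}(\beta) \pmod{\mathbb{Z}}$. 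The point is that if $\beta$ has even diagonal then $M_k.b \equiv b$, so the whole octet of indices is merely permuted (in fact fixed) and the only effect is the sign $e^{\pi i k(M_k,a,b)}$ times the global constant $c$. Using the quasi-periodicity identity $\theta\bigl[\begin{smallmatrix}a+\alpha\\b+\beta\end{smallmatrix}\bigr] = (-1)^{4\,^t\!a\beta}\theta\bigl[\begin{smallmatrix}a\\b\end{smallmatrix}\bigr]$ and the fact that $a = 0$ throughout, one checks $M_k.b$ can be reduced back into $\{0,1/2\}^3$ with trivial sign. From the scalar \eqref{scalar} with $\gamma = 0$, $a = 0$: $k(M_k,0,b) = -b\,^t\!\beta\,(0) - 0 = 0$... so I would instead need to allow a nonzero $\gamma$, or pick a more general $M_k$: the simplest uniform choice is to take a matrix with $\gamma \equiv 0 \pmod 2$ but $\gamma \ne 0$, say $\gamma = 2\gamma_0$, so that $M_k \in \varGamma_0(2)$ genuinely, and then $k(M_k,0,b) = -b\,^t\!\gamma\cdot(\text{stuff}) + \cdots$ produces the sign $(-1)^{\text{something linear in }b}$.

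Concretely, I would search among the standard generators of $\mathrm{Sp}_6(\mathbb{Z})$ — the matrices $\begin{psmallmatrix}\mathbf 1 & \beta\\0&\mathbf 1\end{psmallmatrix}$ with $\beta$ symmetric, $\begin{psmallmatrix}\,^t\!U^{-1}&0\\0&U\end{psmallmatrix}$ with $U \in \mathrm{GL}_g(\mathbb{Z})$, and the single matrix $E$ itself — restricted to those landing in $\varGamma_0(2)$, compose a small number of them, and compute the induced sign $(-1)^{\langle v_k, b\rangle}$ on the ratio $\vartheta_j/\vartheta_0$ via Corollary~\ref{Gamma}'s proof (the displayed formula there gives the exact exponential factor $\exp(\pi i(a\,^t\!\beta\delta\,^t\!a - b\,^t\!\gamma\alpha\,^tb - 2a(^t\!\alpha - \mathbf 1)\,^tb))$, which with $a=0$ collapses to $\exp(-\pi i\, b\,^t\!\gamma\alpha\,^tb)$). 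The requirement is precisely that this quadratic form in $b \in \{0,1/2\}^3$ evaluates to an odd multiple of $\pi i$ exactly when $j = k$ or $j = 7$, i.e.\ exactly on the two characteristics $b = e_k/2$ and $b = (1/2,1/2,1/2)$, and to an even multiple otherwise. I would solve this by choosing $\gamma\alpha = 2N_k$ for an appropriate symmetric integer matrix $N_k$ so that $b\,^t(\gamma\alpha)b \equiv $ the indicator of $\{e_k/2,\,\mathbf 1/2\}$ modulo $2$; a short linear-algebra computation over $\mathbb{F}_2$ (the space of quadratic forms on $\mathbb{F}_2^3$ vanishing at $0$ is $6$-dimensional, and hitting any prescribed value pattern on the remaining $7$ points is a routine interpolation) produces $N_k$, hence $M_k$. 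The main obstacle is bookkeeping: verifying that the chosen $M_k$ is genuinely symplectic and genuinely in $\varGamma_0(2)$ while simultaneously inducing no index permutation on $\{0,\ldots,7\}$ and the exact sign pattern claimed. Once a candidate $M_k$ is written down, the verification is a finite check over the eight characteristics $b \in \{0,1/2\}^3$, so I expect the proof to reduce to exhibiting the six matrices and a table of the eight resulting signs, with the underlying correctness guaranteed by Corollary~\ref{Gamma} and its proof.
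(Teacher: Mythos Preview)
Your approach is essentially the paper's: it too takes $M_k = \begin{psmallmatrix}\mathbf{1}_3 & \mathbf{0}_3\\ \gamma & \mathbf{1}_3\end{psmallmatrix}$ with $\gamma$ a suitable symmetric integer matrix congruent to $\mathbf{0}_3 \pmod 2$ (so $M_k \in \varGamma_{2,4} \subset \varGamma_0(2)$), and reads the sign off from $\exp(-\pi i\, b\,^t\hspace{-0.3mm}\gamma\,^t b)$ via Theorem~\ref{trans}, exactly as you propose after abandoning the $\gamma=0$ attempt. Two small corrections to your write-up: the characteristic $b$ corresponding to index $j=k$ is \emph{not} $e_k/2$ in general (e.g.\ $j=3$ gives $b=(1/2,1/2,0)$, since $j=2b_1+4b_2+8b_3$); and the interpolation is not ``any prescribed pattern'' --- quadratic forms on $\mathbb{F}_2^3$ hit only a $6$-dimensional subspace of the $7$-dimensional space of value-patterns on the nonzero points (the constraint is that the number of $1$'s be even), so it matters that your target pattern flips exactly \emph{two} ratios.
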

\begin{proof}
It follows from Corollary \ref{Gamma} that all symplectic matrix in $\varGamma_{4,8}$ (resp. $\varGamma_{2,4}$) leave the values (resp. squares) of $\vartheta_j/\vartheta_0$ invariant for all $j$.
Hence, let us construct a matrix $M_k \in \varGamma_{2,4}\!\smallsetminus\!\varGamma_{4,8}$ that changes the signs of only $\vartheta_k/\vartheta_0$ and $\vartheta_7/\vartheta_0$.
For each integer $k \in \{1,\ldots,6\}$, we define the $6$-tuple\smallskip
\[
    (c_{11},c_{12},c_{13},c_{22},c_{23},c_{33}) \coloneqq \left\{
    \begin{array}{l}
        (1,1,1,0,0,0) \quad \text{if }\,k = 1,\\
        (0,1,0,1,1,0) \quad \text{if }\,k = 2,\\
        (0,1,0,0,0,0) \quad \text{if }\,k = 3,\\
        (0,0,1,0,1,1) \quad \text{if }\,k = 4,\\
        (0,0,1,0,0,0) \quad \text{if }\,k = 5,\\
        (0,0,0,0,1,0) \quad \text{if }\,k = 6
    \end{array}\right.
\]
and $M_k \coloneqq \begin{psmallmatrix} \alpha & \beta\\[-0.3mm] \gamma & \delta\end{psmallmatrix}$ where\vspace{-1.5mm}
\begin{equation}\label{abcd}
    \alpha \coloneqq \mathbf{1}_3, \quad \beta \coloneqq \mathbf{0}_3, \quad \gamma \coloneqq \begin{pmatrix}
    4c_{11} & 2c_{12} & 2c_{13}\\
    2c_{12} & 4c_{22} & 2c_{23}\\
    2c_{13} & 2c_{23} & 4c_{33}
    \end{pmatrix}, \quad \delta \coloneqq \mathbf{1}_3.\smallskip
\end{equation}
It is obvious that $\alpha\hspace{0.3mm}^t\!\beta = {\mathbf 0}_3,\,\gamma\hspace{0.3mm}^t\hspace{-0.3mm}\delta = \gamma$ are both symmetric and $\alpha\hspace{0.3mm}^t\hspace{-0.3mm}\delta - \beta\hspace{0.3mm}^t\hspace{-0.3mm}\gamma = {\mathbf 1}_3$ holds, therefore $M_k$ belongs to ${\rm Sp}_6(\mathbb{Z})$ from a well-known criterion (cf. \cite[Lemma 8.2.1]{Lange}).
One can check that $M_k$ is the desired matrix since for any $b \in \{0,1/2\}^3$, we have
\[
    \frac{\theta\bigl[\begin{smallmatrix}0\\b\end{smallmatrix}\bigr](0,M_k.\varOmega)}{\theta\bigl[\begin{smallmatrix}0\\0\end{smallmatrix}\bigr](0,M_k.\varOmega)} = \frac{\theta\bigl[\begin{smallmatrix}0\\b\end{smallmatrix}\bigr](0,\varOmega)}{\theta\bigl[\begin{smallmatrix}0\\0\end{smallmatrix}\bigr](0,\varOmega)}\exp(-\pi i b\gamma\hspace{0.3mm}^tb)\smallskip
\]
by Theorem \ref{trans}.
{Moreover, it follows from $\gamma \equiv \mathbf{0}_3 \pmod{2}$ that $M_k \in \varGamma_0(2)$.}
\end{proof}\smallskip

Summarizing the discussions, one can compute $[\vartheta_0:\vartheta_1:\cdots:\vartheta_7] \in \mathbb{P}^7(\mathbb{C})$ from a given squared theta null-point of $A$.
Note that if there exists an index $j \in \{0,\ldots,7\}$ such that $\vartheta_j = 0$, then one can take any square root of $\vartheta_k^2$ for all $k \in \{0,\ldots,7\}\!\smallsetminus\!\{j\}$ (see Remark \ref{0vanish} below a proof when $\vartheta_0=0$).
Consider the $(2,2,2)$-isogeny\smallskip
\begin{align*}
    \begin{split}
    A = \mathbb{C}^3\hspace{-0.2mm}/(\mathbb{Z}^3 + \varOmega\mathbb{Z}^3) &\longrightarrow \mathbb{C}^3\hspace{-0.2mm}/(\mathbb{Z}^3 + 2\varOmega\mathbb{Z}^3) = B\\[-0.5mm]
    z &\longmapsto 2z
    \end{split}
\end{align*}
whose kernel is $\frac{1}{2}\mathbb{Z}^3/\mathbb{Z}^3$, and we explain how to compute a squared theta null-point of the codomain $B$.
Applying Theorem \ref{duplication}
for $z=0$ and $\varOmega \mapsto 2\varOmega$, we obtain\smallskip
\begin{equation}\label{dupl_g=3}
    \theta\bigl[\begin{smallmatrix}a\\b\end{smallmatrix}\bigr](0,2\varOmega)^2 = \frac{1}{2^3}\sum_{\beta \in \{0,1/2\}^3}\!(-1)^{4\hspace{0.3mm}^t\hspace{-0.3mm}a\beta}\theta\bigl[\begin{smallmatrix}0\\\beta\end{smallmatrix}\bigr](0,\varOmega)\theta\bigl[\begin{smallmatrix}0\\b+\beta\end{smallmatrix}\bigr](0,\varOmega)\smallskip
\end{equation}
for all characteristics $a,b \in \{0,1/2\}^3$.
Since the right-hand side of (\ref{dupl_g=3}) is obtained by the values $\vartheta_0,\ldots,\vartheta_7$, we obtain a squared theta null-point of $B$.

\begin{Rmk}\label{0vanish}
{If $\vartheta_0 = \vartheta_1 = \cdots = \vartheta_7 = 0$, then by (\ref{dupl_g=3}) we obtain $\theta\bigl[\begin{smallmatrix}a\\b\end{smallmatrix}\bigr](0,2\varOmega)^2 = 0$ for all $a,b \in \{0,1/2\}^3$, which contradicts Proposition \ref{van_num}.
Hence, even in the case where $\vartheta_0=0$, one can choose an index $i \in \{1,\ldots,7\}$ such that $\vartheta_i \neq 0$.
In addition, for each $k \in \{1,\ldots,7\}\!\smallsetminus\!\{i\}$, one can construct a symplectic matrix $M_{i,k} \in \varGamma_{2,4}\!\smallsetminus\!\varGamma_{4,8}$ such that
\[
    \frac{\vartheta_j(0,M_{i,k}.\varOmega)}{\vartheta_i(0,M_{i,k}.\varOmega)} = \left\{
    \begin{array}{ll}
        -\vartheta_j/\vartheta_i \ & {\rm if}\ j = 0\ {\rm or}\ k,\\
        \vartheta_j/\vartheta_i \ & {\rm otherwise}
    \end{array}
    \right.\medskip
\]
for all integers $j \in \{0,\ldots,7\}\!\smallsetminus\!\{i\}$, in a similar way to the proof of Proposition \ref{construct}.
This means that, even in the case where $\vartheta_0=0$, we can take any square root of $\vartheta_k^2$ for all $k \in \{1,\ldots,7\}$.}
\end{Rmk}

As a summary of the statements in this subsection, we give an explicit algorithm (Algorithm \ref{compute_isogeny} below) for computing a squared theta null-point of the codomain of a $(2,2,2)$-isogeny from the given abelian threefold $A$:\vspace{-0.7mm}
\begin{algorithm}[htbp]
\begin{algorithmic}[1]
\caption{{\sf \ ComputeOneRichelotIsogeny}}\label{compute_isogeny}
\renewcommand{\algorithmicrequire}{\textbf{Input:}}
\renewcommand{\algorithmicensure}{\textbf{Output:}}
\Require A squared theta null-point $[\vartheta_i^2]_i \in \mathbb{P}^{63}$ of a p.p.a.v.$\,A$ of dimension 3
\Ensure A squared theta null-point of $B$, one of the p.p.a.v.'s which are $(2,2,2)$-isogenous to $A$
\If{$\vartheta_0^2,\ldots,\vartheta_7^2$ are all non-zero}
\State Choose a square root of $(\vartheta_k/\vartheta_0)^2$ for all $k \in \{1,\ldots,6\}$
\State Compute $\vartheta_7/\vartheta_0$ from equation (\ref{product}) 
\Else
\State Find an index $j \in \{0,\ldots,7\}$ such that $\vartheta_j^2 \neq 0$
\State Choose a square root of $(\vartheta_k/\vartheta_j)^2$ for all $k \in \{0,\ldots,7\}\!\smallsetminus\!\{j\}$
\EndIf
\For{$i \in \{0,\ldots,63\}$}
\State Write $i = {a_3a_2a_1b_3b_2b_1}_{(2)}$ in binary and initialize ${\vartheta'}_{\!\!i}^2 \!\leftarrow 0$
\For{$j \in \{0,\ldots,7\}$}
\State Write $j = {\beta_3\beta_2\beta_1}_{(2)}$ {in binary} and $k \leftarrow i+j \pmod{8}$
\State Let ${\vartheta'}_{\!\!i}^2 \!\leftarrow {\vartheta'}_{\!\!i}^2 + (-1)^{a_1\beta_1+a_2\beta_2+a_3\beta_3} \cdot \vartheta_j\vartheta_k$
\EndFor
\EndFor\\
\Return $[{\vartheta'}_{\!\!0}^2:{\vartheta'}_{\!\!1}^2:\cdots:{\vartheta'}_{\!\!63}^2] \in \mathbb{P}^{63}$
\end{algorithmic}
\end{algorithm}\vspace{-0.7mm}

\noindent We note that it suffices to perform the loop in lines 8--14 only for $i \in I_{\rm even}$ instead, since ${\vartheta'}_{\!\!i}^2 \!= 0$ holds for all $i \in I_{\rm odd}$.

\subsection{Following all edges from a given vertex}\label{all}
We continue to assume that a squared theta null-point of an abelian threefold $A$ with the period matrix $\varOmega \in \mathcal{H}_3$ is given, and let $P,Q,R \in A$ as follows:\vspace{-0.5mm}
\[
    \xymatrix@R=-1pt{
        A \ar[r]^{\hspace{-10mm}\cong} & \mathbb{C}^3/(\mathbb{Z}^3 + \varOmega\mathbb{Z}^3)\\
        P,Q,R \ar@{|->}[r] & (1/2,0,0),\,(0,1/2,0),\,(0,0,1/2)
    }\vspace{0.2mm}
\]
Then, the $(2,2,2)$-isogeny computed in Algorithm \ref{compute_isogeny} has the kernel $G = \langle P,Q,R \rangle$, which is determined by a given squared theta null-point of $A$ (arbitrary sign choices in line 2 or 6 do not affect the isomorphism class of the codomains;\, see Remark \ref{kernel} for details).
To compute distinct $(2,2,2)$-isogenies from $A$, it is necessary to act an appropriate matrix $M \in {\rm Sp}_6(\mathbb{Z})$ on a squared theta null-point of $A$.
Specifically, if we write $M = \begin{psmallmatrix} \alpha & \beta\\[-0.3mm] \gamma & \delta\end{psmallmatrix}$ and let $M.P,M.Q,M.R \in A$ corresponding to
\[
    (\alpha_{1j}/2,\alpha_{2j}/2,\alpha_{3j}/2) + \varOmega(\gamma_{1j}/2,\gamma_{2j}/2,\gamma_{3j}/2), \quad {\rm where}\ \alpha = (\alpha_{ij}),\ \gamma = (\gamma_{ij})\medskip
\]
for $j \in \{1,2,3\}$ respectively, then we can compute the $(2,2,2)$-isogeny whose kernel is given as $M.G \coloneqq \langle M.P,M.Q,M.R \rangle$ after acting $M$ on a squared theta null-point.
For any symplectic matrix $M \in {\rm Sp}_6(\mathbb{Z})$, it is well-known (cf. \cite[$\S 2.3.2$]{Cosset}) that $G$ is invariant (i.e., $M.G = G$) under the action by $M$ if and only if $M \in \varGamma_0(2)$.
Hence, we need to act by an element of ${\rm Sp}_6(\mathbb{Z})/\varGamma_0(2)$ on the squared theta null-point of $A$ to obtain a different $(2,2,2)$-isogeny from $A$.
Since there exist exactly $N_3(2) = 135$ different $(2,2,2)$-isogenies from $A$ (cf. Proposition \ref{maxisonum}), the order of ${\rm Sp}_6(\mathbb{Z})/\varGamma_0(2)$ is $135$.
A complete list of these 135 representatives of ${\rm Sp}_6(\mathbb{Z})/\varGamma_0(2)$ can be found, for example, in \cite[$\S {\rm V}.21$]{Tsuyumine}.\vspace{-0.5mm}
\begin{Rmk}\label{kernel}
{As shown in Proposition \ref{construct}, the symplectic matrix $M_k$ that induces the sign change only in $\vartheta_k/\vartheta_0$ for each $k \in \{1,\ldots,6\}$ belongs to the subgroup $\varGamma_0(2)$.}
In other words, for arbitrary choice of the square roots in line 2 of Algorithm \ref{construct}, the matrix that induces the corresponding sign changes leaves $\langle P,Q,R \rangle$ invariant.
A similar statement holds for line 6.
\end{Rmk}

The action of each $M \in {\rm Sp}_6(\mathbb{Z})/\varGamma_0(2)$ on a squared theta null-point of $A$ can be computed via the equation
\[
    \theta\bigl[\begin{smallmatrix}M.a\\\hspace{-0.3mm}M.b\end{smallmatrix}\bigr](0,M.\varOmega)^2 = c \cdot e^{2\pi ik(M,a,b)}\theta\bigl[\begin{smallmatrix}a\\b\end{smallmatrix}\bigr](0,\varOmega)^2\smallskip
\]
by using Theorem \ref{trans}, where $M.a,M.b$, and $k(M,a,b)$ are defined in (\ref{index}) and (\ref{scalar}).
By applying Algorithm \ref{compute_isogeny} to each of the squared theta null-points obtained above, we can compute a list of squared theta null-points of all the 135 abelian threefolds that are $(2,2,2)$-isogenous to the given abelian threefold $A$.
Here is the pseudocode (Algorithm \ref{compute_isogenies} below) summarizes this procedure:\vspace{-0.7mm}

\begin{algorithm}[htbp]
\begin{algorithmic}[1]
\caption{{\sf \ ComputeAllRichelotIsogenies}}\label{compute_isogenies}
\renewcommand{\algorithmicrequire}{\textbf{Input:}}
\renewcommand{\algorithmicensure}{\textbf{Output:}}
\Require A squared theta null-point $[\vartheta_i^2]_i \in \mathbb{P}^{63}$ of a p.p.a.v.\hspace{1mm}$A$ of dimension 3
\Ensure A list of squared theta null-points of all the p.p.a.v.'s which are $(2,2,2)$-isogenous to $A$
\State Initialize $\mathcal{B} \leftarrow \emptyset$
\ForAll{$M \in {{\rm Sp}_6(\mathbb{Z})/\varGamma_0(2)}$}
\State Write $M = \begin{psmallmatrix} \alpha & \beta\\[-0.3mm] \gamma & \delta\end{psmallmatrix}$ where $\alpha,\beta,\gamma$ and $\delta$ are all $(3 \times 3)$-matrices
\For{$i \in \{0,\ldots,63\}$}
% \If{$i \in I_{\rm even}$}
\State Write $i = {a_3a_2a_1b_3b_2b_1}_{(2)}$ in binary and $a \leftarrow (a_1,a_2,a_3),\,b \leftarrow (b_1,b_2,b_3)$
\State Let $k \leftarrow (a\hspace{0.3mm}^t\hspace{-0.3mm}\delta - b\,^t\hspace{-0.3mm}\gamma) \,\cdot\hspace{0.3mm}^t\hspace{-0.3mm}(b\,^t\hspace{-0.3mm}\alpha - a\hspace{0.3mm}^t\hspace{-0.3mm}\beta + 2(\alpha\hspace{0.3mm}^t\hspace{-0.3mm}\beta)_0) - a\hspace{0.3mm}^tb$
\State Let $(a'_1,a'_2,a'_3) \leftarrow a\hspace{0.3mm}^t\hspace{-0.3mm}\delta - b\,^t\hspace{-0.3mm}\gamma + (\gamma\hspace{0.3mm}^t\hspace{-0.3mm}\delta)_0,\,(b'_1,b'_2,b'_3) \leftarrow b\,^t\hspace{-0.5mm}\alpha - a\hspace{0.3mm}^t\hspace{-0.3mm}\beta + (\alpha\hspace{0.3mm}^t\hspace{-0.3mm}\beta)_0$
\State Let ${\vartheta'}_{\!\!j}^2 \!\leftarrow (\hspace{-0.3mm}\sqrt{-1})^{4k} \cdot \vartheta_i^2$ with $j = {a'_3a'_2a'_1b'_3b'_2b'_1}_{(2)}$\vspace{-0.3mm}
% \Else $\ {\vartheta'}_{\!\!i}^2 \!\leftarrow 0$
% \EndIf
\EndFor
\State Add {\sf ComputeOneRichelotIsogeny}$([{\vartheta'}_{\!\!i}^2]_i)$ to $\mathcal{B}$
\EndFor
\\\Return $\mathcal{B}$
\end{algorithmic}
\end{algorithm}\vspace{-1mm}

\noindent We note that it suffices to perform the loop in lines 4--9 only for $i \in I_{\rm even}$ instead, since $\vartheta_i^2 \!= 0$ holds for all $i \in I_{\rm odd}$.

\subsection{Producing the defining equation of a genus-3 curve}\label{restore}
In Section \ref{g=3}, we introduced some relations between a genus-3 curve $C$ and a level-4 theta null-point of ${\rm Jac}(C)$.
However, these relations are not sufficient for recovering $C$ from a given squared theta null-point.
Specifically, there are the following issues as it stands:\smallskip
\begin{itemize}
\item[---] If $N_{\rm van} = 1$ and $S_{\rm van} \neq \{61\}$, how should we restore a defining equation of the corresponding genus-3 hyperelliptic curve?\smallskip
\item[---] If $N_{\rm van} = 0$, we have to take some square roots of $\vartheta_k^2$ to use Theorem \ref{Weber}. How should we determine them?\smallskip
\end{itemize}
\noindent In this subsection, we explain in detail how to do this.
We begin by defining several matrices to address the former problem as follows:
\begin{Def}\label{hyp_matrix}
For each $i \in I_{\rm even}$, we define $\beta_i,\gamma_i$ as follows:
\begin{itemize}
\item If $i \notin \{27,31,45,47,54,55,59,61,62\}$, then we set
\[
    \beta_i \coloneqq {\rm diag}(b_1,b_2,b_3), \quad \gamma_i \coloneqq {\rm diag}(a_1,a_2,a_3).\smallskip
\]
where $i$ is expressed in binary as $i = {a_3a_2a_1b_3b_2b_1}_{(2)}$.
\item Otherwise, we set
{\small \begin{alignat*}{3}
    \beta_{27} &\coloneqq
        \begin{pmatrix}
            1 & 1 & 0\\
            1 & 1 & 0\\
            0 & 0 & 0
        \end{pmatrix}, &\quad \gamma_{27} &\coloneqq
        \begin{pmatrix}
            1 & -1 & 0\\
            -1 & 1 & 0\\
            0 & 0 & 0
        \end{pmatrix},\\
    \beta_{31} &\coloneqq
        \begin{pmatrix}
            1 & 1 & 1\\
            1 & 1 & 1\\
            1 & 1 & 1
        \end{pmatrix}, &\quad \gamma_{31} &\coloneqq
        \begin{pmatrix}
            1 & -1 & 0\\
            -1 & 1 & 0\\
            0 & 0 & 0
        \end{pmatrix},\\
    \beta_{45} &\coloneqq
        \begin{pmatrix}
            1 & 0 & 1\\
            0 & 0 & 0\\
            1 & 0 & 1
        \end{pmatrix}, &\quad \gamma_{45} &\coloneqq
        \begin{pmatrix}
            1 & 0 & -1\\
            0 & 0 & 0\\
            -1 & 0 & 1
        \end{pmatrix},\\
    \beta_{47} &\coloneqq
        \begin{pmatrix}
            1 & 1 & 1\\
            1 & 1 & 1\\
            1 & 1 & 1
        \end{pmatrix}, &\quad \gamma_{47} &\coloneqq
        \begin{pmatrix}
            1 & 0 & -1\\
            0 & 0 & 0\\
            -1 & 0 & 1
        \end{pmatrix},\\
    \beta_{54} &\coloneqq
        \begin{pmatrix}
            0 & 0 & 0\\
            0 & 1 & 1\\
            0 & 1 & 1
        \end{pmatrix}, &\quad \gamma_{54} &\coloneqq
        \begin{pmatrix}
            0 & 0 & 0\\
            0 & 1 & -1\\
            0 & -1 & 1
        \end{pmatrix},\\
    \beta_{55} &\coloneqq
        \begin{pmatrix}
            1 & 1 & 1\\
            1 & 1 & 1\\
            1 & 1 & 1
        \end{pmatrix}, &\quad \gamma_{55} &\coloneqq
        \begin{pmatrix}
            0 & 0 & 0\\
            0 & 1 & -1\\
            0 & -1 & 1
        \end{pmatrix},\\
    \beta_{59} &\coloneqq
        \begin{pmatrix}
            1 & -1 & 0\\
            -1 & 1 & 0\\
            0 & 0 & 0
        \end{pmatrix}, &\quad \gamma_{59} &\coloneqq
    \begin{pmatrix}
        1 & 1 & 1\\
        1 & 1 & 1\\
        1 & 1 & 1
    \end{pmatrix},\\
    \beta_{61} &\coloneqq
        \begin{pmatrix}
            1 & 0 & -1\\
            0 & 0 & 0\\
            -1 & 0 & 1
        \end{pmatrix}, &\quad \gamma_{61} &\coloneqq
        \begin{pmatrix}
            1 & 1 & 1\\
            1 & 1 & 1\\
            1 & 1 & 1
        \end{pmatrix},\\
    \beta_{62} &\coloneqq
        \begin{pmatrix}
            0 & 0 & 0\\
            0 & 1 & -1\\
            0 & -1 & 1
        \end{pmatrix}, &\quad \gamma_{62} &\coloneqq
        \begin{pmatrix}
            1 & 1 & 1\\
            1 & 1 & 1\\
            1 & 1 & 1
        \end{pmatrix}.
\end{alignat*}}
\end{itemize}
It is straightforward to verify that the matrix $P_i \coloneqq \begin{psmallmatrix} \mathbf{1}_3 & \beta_i\\[-0.3mm] \gamma_i & \mathbf{1}_3\end{psmallmatrix}$ belongs to ${\rm Sp}_6(\mathbb{Z})$.
\end{Def}
\begin{Lem}\label{trans_hyp}
Suppose that $\vartheta_0 = 0$. For each $i \in I_{\rm even}$, we have $\vartheta_i(0,P_i\hspace{0.2mm}.\hspace{0.2mm}\varOmega) = 0$.
\end{Lem}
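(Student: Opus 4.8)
The plan is to read the lemma off the theta transformation formula (Theorem~\ref{trans}). Apply that formula with $M = P_i$, $z = 0$, and $a = b = 0$: since $k(P_i,0,0) = 0$ by (\ref{scalar}), it gives
\[
    \theta\bigl[\begin{smallmatrix}P_i.0\\P_i.0\end{smallmatrix}\bigr](0,\,P_i.\varOmega) \;=\; c\cdot\theta\bigl[\begin{smallmatrix}0\\0\end{smallmatrix}\bigr](0,\varOmega) \;=\; c\cdot\vartheta_0
\]
for some $c \in \mathbb{C}^{\times}$. Under the hypothesis $\vartheta_0 = 0$ the right-hand side vanishes, so $\theta\bigl[\begin{smallmatrix}P_i.0\\P_i.0\end{smallmatrix}\bigr](0,\,P_i.\varOmega) = 0$. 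What remains is purely combinatorial: I must check that the pair of transformed characteristics $P_i.0$ (its $a$-part and its $b$-part, computed from (\ref{index})) is exactly the characteristic with index $i$ in the sense of Definition~\ref{notation}, so that $\theta\bigl[\begin{smallmatrix}P_i.0\\P_i.0\end{smallmatrix}\bigr] = \vartheta_i$.

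To compute $P_i.0$, first recall (as noted after Definition~\ref{hyp_matrix}) that $P_i = \begin{psmallmatrix}\mathbf{1}_3 & \beta_i\\ \gamma_i & \mathbf{1}_3\end{psmallmatrix} \in {\rm Sp}_6(\mathbb{Z})$; this holds because $\beta_i$ and $\gamma_i$ are symmetric and $\beta_i\gamma_i = \mathbf{0}_3$, which is the criterion of \cite[Lemma 8.2.1]{Lange}. With $\alpha = \delta = \mathbf{1}_3$ and $a = b = 0$, the formulas (\ref{index}) collapse to
\[
    P_i.0 \;=\; \tfrac{1}{2}(\gamma_i)_0 \quad (a\text{-part}), \qquad\qquad P_i.0 \;=\; \tfrac{1}{2}({}^{t}\!\beta_i)_0 \;=\; \tfrac{1}{2}(\beta_i)_0 \quad (b\text{-part}),
\]
where $(\,\cdot\,)_0$ is the row of diagonal entries and the last equality uses symmetry of $\beta_i$. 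Since the diagonal entries of $\beta_i,\gamma_i$ all lie in $\{0,1\}$, both components indeed lie in $\{0,1/2\}^3$.

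Finally I would match these with the index $i$. For $i \notin \{27,31,45,47,54,55,59,61,62\}$ this is immediate: writing $i = {a_3a_2a_1b_3b_2b_1}_{(2)}$, Definition~\ref{hyp_matrix} sets $\gamma_i = {\rm diag}(a_1,a_2,a_3)$ and $\beta_i = {\rm diag}(b_1,b_2,b_3)$, so $\tfrac12(\gamma_i)_0 = (a_1/2,a_2/2,a_3/2)$ and $\tfrac12(\beta_i)_0 = (b_1/2,b_2/2,b_3/2)$, which is precisely the characteristic indexed by $2b_1 + 4b_2 + 8b_3 + 16a_1 + 32a_2 + 64a_3 = i$. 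For each of the nine remaining indices I would check directly that the diagonal of the listed $\gamma_i$ (resp.\ $\beta_i$) still returns the bits $(a_1,a_2,a_3)$ (resp.\ $(b_1,b_2,b_3)$) of $i$; for example $61 = 111101_{(2)}$ while ${\rm diag}(\gamma_{61}) = (1,1,1)$ and ${\rm diag}(\beta_{61}) = (1,0,1)$, so $P_{61}.0$ is again the characteristic with index $61$. In every case $\theta\bigl[\begin{smallmatrix}P_i.0\\P_i.0\end{smallmatrix}\bigr] = \vartheta_i$, and combining with the first display gives $\vartheta_i(0,\,P_i.\varOmega) = 0$.

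The one genuinely computational point — and the only real obstacle — is the verification for the nine special indices. Structurally, these are exactly the even indices for which the associated characteristic has two of its three coordinate-pairs $(a_j,b_j)$ equal to $(1/2,1/2)$ (a count forced to be $0$ or $2$ by the evenness condition $4\,{}^{t}\!ab \in 2\mathbb{Z}$); for such $i$ the naive diagonal choice would give $\beta_i\gamma_i \ne \mathbf{0}_3$, so Definition~\ref{hyp_matrix} replaces the relevant diagonal pattern by suitable symmetric integer blocks. Confirming, for each of the nine matrices, that $\beta_i,\gamma_i$ stay symmetric, that $\beta_i\gamma_i = \mathbf{0}_3$, and that the diagonals are unchanged, is finite and mechanical, but it is what makes the argument go through.
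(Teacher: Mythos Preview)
Your proof is correct and follows essentially the same route as the paper: apply Theorem~\ref{trans} with $M=P_i$, $z=0$, $a=b=0$, then use (\ref{index}) to identify the transformed characteristics $\tfrac12(\gamma_i)_0,\ \tfrac12(\beta_i)_0$ with the characteristic indexed by $i$. You supply more detail than the paper does---in particular the explicit check for the nine exceptional indices and the observation that those are precisely the even $i$ for which the diagonal choice would violate $\beta_i\gamma_i=\mathbf{0}_3$---but the argument is the same.
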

\begin{proof}
With the same notations as in Definition \ref{hyp_matrix}, we have by (\ref{index}) that
\[
    P_i\hspace{0.2mm}.\hspace{0.2mm}a = \frac{1}{2}(\gamma_i{\hspace{0.3mm}}^t\hspace{-0.3mm}\delta_i)_0 = (a_1,a_2,a_3), \quad 
    P_i\hspace{0.2mm}.\hspace{0.2mm}b = \frac{1}{2}(\alpha_i{\hspace{0.3mm}}^t\hspace{-0.5mm}\beta_i)_0 = (b_1,b_2,b_3).\smallskip
\]
for $a=b=0$.
Then, this lemma follows from Theorem \ref{trans}.
\end{proof}

Let $A$ be a principally polarized abelian threefold isomorphic to the Jacobian of a hyperelliptic curve $C$ of genus 3 and suppose that a squared theta null-point of $A$ is given.
It follows from Proposition \ref{van_num} that {there exists a unique} index $i \in I_{\rm even}$ satisfying $\vartheta_i = 0$.
Let $[{\vartheta'}_{\!\!i}^2]_i$ be the squared theta null-point of $A$ obtained by acting a matrix $P_{61}{P_i}^{-1} \in {\rm Sp}_6(\mathbb{Z})$ on the given $[\vartheta_i^2]_i$, and we have ${\vartheta'}_{\!\!61} = 0$ by Lemma \ref{trans_hyp}.
Hence, we can apply Proposition \ref{hyp_restore} to $[{\vartheta'}_{\!\!i}^2]_i \in \mathbb{P}^{63}$, which enables us to restore {a} defining equation of $C$ as follows: 

\newpage
\begin{algorithm}[htbp]
\begin{algorithmic}[1]
\caption{{\sf \ RestoreHyperellipticCurve}}\label{restore_hyper}
\renewcommand{\algorithmicrequire}{\textbf{Input:}}
\renewcommand{\algorithmicensure}{\textbf{Output:}}
\Require A squared theta null-point $[\vartheta_i^2]_i \in \mathbb{P}^{63}$ with $N_{\rm van} = 1$ of a p.p.a.v.\hspace{0.7mm}$A$
\Ensure A hyperelliptic curve $C$ of genus $3$ such that $A \cong {\rm Jac}(C)$\vspace{0.3mm}
\State Let $i \in I_{\rm even}$ such that $\vartheta_i^2 = 0$ and define $M \coloneqq P_{61}{P_i}^{-1}$
\State Write $M = \begin{psmallmatrix} \alpha & \beta\\[-0.3mm] \gamma & \delta\end{psmallmatrix}$ where $\alpha,\beta,\gamma$ and $\delta$ are all $(3 \times 3)$-matrices\vspace{0.1mm}
\For{$i \in \{0,\ldots,63\}$}
% \If{$i \in I_{\rm even}$}
\State Write $i = {a_3a_2a_1b_3b_2b_1}_{(2)}$ in binary and $a \leftarrow (a_1,a_2,a_3),\,b \leftarrow (b_1,b_2,b_3)$
\State Let $k \leftarrow (a\hspace{0.3mm}^t\hspace{-0.3mm}\delta - b\,^t\hspace{-0.3mm}\gamma) \,\cdot\hspace{0.3mm}^t\hspace{-0.3mm}(b\,^t\hspace{-0.3mm}\alpha - a\hspace{0.3mm}^t\hspace{-0.3mm}\beta + 2(\alpha\hspace{0.3mm}^t\hspace{-0.3mm}\beta)_0) - a\hspace{0.3mm}^tb$
\State Let $(a'_1,a'_2,a'_3) \leftarrow a\hspace{0.3mm}^t\hspace{-0.3mm}\delta - b\,^t\hspace{-0.3mm}\gamma + (\gamma\hspace{0.3mm}^t\hspace{-0.3mm}\delta)_0,\,(b'_1,b'_2,b'_3) \leftarrow b\,^t\hspace{-0.5mm}\alpha - a\hspace{0.3mm}^t\hspace{-0.3mm}\beta + (\alpha\hspace{0.3mm}^t\hspace{-0.3mm}\beta)_0$
\State Let ${\vartheta'}_{\!\!j}^2 \!\leftarrow (\hspace{-0.3mm}\sqrt{-1})^{4k} \cdot \vartheta_i^2$ with $j = {a'_3a'_2a'_1b'_3b'_2b'_1}_{(2)}$\vspace{-0.3mm}
% \Else $\ {\vartheta'}_{\!\!i}^2 \!\leftarrow 0$
% \EndIf
\EndFor
\State Compute $\lambda_1,\ldots,\lambda_5$ from the squared theta null-point $[{\vartheta'}_{\!\!i}^2]_i \hspace{-0.2mm}\in \mathbb{P}^{63}$ using (\ref{Rosenhain})\\
\Return the curve $y^2 = x(x-1)\prod_{k=1}^5(x-\lambda_k)$
\end{algorithmic}
\end{algorithm}\vspace{0.3mm}

Next, we suppose that a squared theta null-point of $A = {\rm Jac}(C)$ is given, where $C$ is a smooth plane quartic.
In the following, we explain how to recover $C$ according to the method in \cite[$\S 6.4$]{Milio}. It follows from Theorem \ref{Riemann} that\smallskip
\begin{equation}\label{relation1}
    \left\{
    \begin{array}{l}
        \vartheta_0\vartheta_{16}\vartheta_{45}\vartheta_{61} = \vartheta_5\vartheta_{21}\vartheta_{40}\vartheta_{56} - \vartheta_{12}\vartheta_{28}\vartheta_{33}\vartheta_{49},\\
        \vartheta_{12}\vartheta_{21}\vartheta_{33}\vartheta_{56} = \vartheta_5\vartheta_{28}\vartheta_{40}\vartheta_{49} - \vartheta_2\vartheta_{27}\vartheta_{47}\vartheta_{54},\\
        \vartheta_3\vartheta_{20}\vartheta_{32}\vartheta_{55} = \vartheta_2\vartheta_{21}\vartheta_{33}\vartheta_{54} + \vartheta_{12}\vartheta_{27}\vartheta_{47}\vartheta_{56},
    \end{array}
    \right.
\end{equation}\vspace{-2mm}
\begin{equation}
    \left\{
    \begin{array}{l}\label{relation2}
    \vartheta_7\vartheta_{14}\vartheta_{35}\vartheta_{42} = \vartheta_5\vartheta_{12}\vartheta_{33}\vartheta_{40} -\vartheta_{21}\vartheta_{28}\vartheta_{49}\vartheta_{56},\\
    \vartheta_{14}\vartheta_{16}\vartheta_{35}\vartheta_{61} = \vartheta_2\vartheta_{28}\vartheta_{47}\vartheta_{49} - \vartheta_5\vartheta_{27}\vartheta_{40}\vartheta_{54},\\
    \vartheta_7\vartheta_{16}\vartheta_{42}\vartheta_{61} = \vartheta_2\vartheta_{21}\vartheta_{47}\vartheta_{56} -  \vartheta_{12}\vartheta_{27}\vartheta_{33}\vartheta_{54} 
    \end{array}
    \right.\smallskip
\end{equation}
for appropriate inputs $m_1,m_2,m_3,m_4$.
Squaring both sides of (\ref{relation1}), we obtain\smallskip
\begin{align}\label{square}
    \begin{split}
    2\vartheta_5\vartheta_{12}\vartheta_{21}\vartheta_{28}\vartheta_{33}\vartheta_{40}\vartheta_{49}\vartheta_{56} &= \vartheta_5^2\vartheta_{21}^2\vartheta_{40}^2\vartheta_{56}^2 + \vartheta_{12}^2\vartheta_{28}^2\vartheta_{33}^2\vartheta_{49}^2 -\vartheta_0^2\vartheta_{16}^2\vartheta_{45}^2\vartheta_{61}^2,\\[-0.5mm]
    2\vartheta_2\vartheta_5\vartheta_{27}\vartheta_{28}\vartheta_{40}\vartheta_{47}\vartheta_{49}\vartheta_{54} &= \vartheta_5^2\vartheta_{28}^2\vartheta_{40}^2\vartheta_{49}^2 + \vartheta_2^2\vartheta_{27}^2\vartheta_{47}^2\vartheta_{54}^2 - \vartheta_{12}^2\vartheta_{21}^2\vartheta_{33}^2\vartheta_{56}^2,\\[-0.5mm]
    2\vartheta_2\vartheta_{12}\vartheta_{21}\vartheta_{27}\vartheta_{33}\vartheta_{47}\vartheta_{54}\vartheta_{56} &= \vartheta_3^2\vartheta_{20}^2\vartheta_{32}^2\vartheta_{55}^2 - \vartheta_2^2\vartheta_{21}^2\vartheta_{33}^2\vartheta_{54}^2 - \vartheta_{12}^2\vartheta_{27}^2\vartheta_{47}^2\vartheta_{56}^2.
    \end{split}\\[-3.6mm]\nonumber
\end{align}
Then, we can restore a defining equation of a plane quartic $C$, as follows:\vspace{-0.3mm}

\begin{algorithm}[H]
\begin{algorithmic}[1]
\caption{{\sf \ RestorePlaneQuartic}}\label{restore_quartic}
\renewcommand{\algorithmicrequire}{\textbf{Input:}}
\renewcommand{\algorithmicensure}{\textbf{Output:}}
\Require A squared theta null-point $[\vartheta_i^2]_i \in \mathbb{P}^{63}$ with $N_{\rm van} = 0$ of a p.p.a.v.\,$A$
\Ensure A plane quartic $C$ of genus $3$ such that $A \cong {\rm Jac}(C)$
\State Compute $a_{11}^2,a_{{21}}^2,a_{{31}}^2$ by equation (\ref{aij}) and choose their square roots
\State Compute $\vartheta_5\vartheta_{12}\vartheta_{33}\vartheta_{40},\,\vartheta_5\vartheta_{27}\vartheta_{40}\vartheta_{54},\,\vartheta_{12}\vartheta_{27}\vartheta_{33}\vartheta_{54}$ from the values $a_{11},a_{21},a_{31}$
\State Compute $\vartheta_{21}\vartheta_{28}\vartheta_{49}\vartheta_{56},\,\vartheta_2\vartheta_{28}\vartheta_{47}\vartheta_{49},\,\vartheta_2\vartheta_{12}\vartheta_{33}\vartheta_{56}$ from equation (\ref{square})
\State Compute $\vartheta_7\vartheta_{14}\vartheta_{35}\vartheta_{42},\,\vartheta_{14}\vartheta_{16}\vartheta_{35}\vartheta_{61},\,\vartheta_7\vartheta_{16}\vartheta_{42}\vartheta_{61}$ from equation (\ref{relation2})
\State Compute $a_{12},a_{13},a_{22},a_{23},a_{32},$ and $a_{33}$ from equation (\ref{aij})
\State Compute $k_1,k_2,k_3$ from equation (\ref{matrix})
\State Compute the homogeneous linear polynomials $\xi_1,\xi_2,\xi_3$ from Theorem \ref{Weber}\\
\Return the curve $(\xi_1x + \xi_2y - \xi_3z)^2 - 4\xi_1\xi_2xy = 0$
\end{algorithmic}
\end{algorithm}\vspace{-0.9mm}

\noindent We note that the square roots of every $a_{i1}^2$ in line 1 can be chosen arbitrarily, since making the sign change $a_{i1} \mapsto -a_{i1}$ simply induces $a_{i2} \mapsto -a_{i2}$ and $a_{i3} \mapsto -a_{i3}$ for each $i \in \{1,2,3\}$ (cf. \cite[p.\,1361]{Milio}).

\subsection{Validity in characteristic different from 2}\label{algebraic}
All the discussions so far have been carried out over $\mathbb{C}$, but in this subsection, let us discuss their validity over an algebraically closed field of characteristic $p \neq 2$.
For a principally polarized abelian variety $A$ of dimension $g$, we consider the level-2 (algebraic) theta structure
\[
    \theta^A_{\rm lv2}: A \rightarrow \mathbb{P}^{2^g-1}\,;\, P \mapsto \bigl[\theta_{(i_g \cdots i_1)}(P)\bigr]_{i_1,\ldots,i_g \in \{0,1\}}
\]
and the level-4 (algebraic) theta structure
\[
    \theta^A_{\rm lv4}: A \rightarrow \mathbb{P}^{2^{2g}-1}\,;\, P \mapsto \biggl[\theta_{\bigl(\substack{i_g \cdots i_1\\[-0.5mm]j_g \cdots j_1}\bigr)}(P)\biggr]_{i_1,\ldots,i_g,j_1,\ldots,j_g \in \{0,1\}}
\]
of $A$.
In particular, we call $\theta^A_{\rm lv4}(0_A) \in \mathbb{P}^{2^{2g}-1}$ a \emph{level-4 (algebraic) theta null-point} of $A$, where $0_A$ is the identity of $A$.
As stated in \cite[Chapter 4, Proposition 5.2]{Robert}, the level-2 theta structure and the level-4 theta structure are related by the formula
\begin{equation}\label{dupl-algebraic}
    \theta_{\bigl(\substack{i\\[0.1mm]j}\bigr)}(P)^2 = \sum_{t=t_g \cdots t_1} (-1)^{\langle i,t\rangle}\theta_t(0_A)\theta_{j+t}(2P)\ \,\text{for all } P \in A\smallskip
\end{equation}
where $\langle i,t \rangle \coloneqq \sum_{k=1}^g i_kt_k$.\vspace{-0.5mm}
\begin{Rmk}\label{algebraic-analytic}
If $A$ is a complex abelian variety with the period matrix $\varOmega \in \mathcal{H}_g$, then we have
\begin{alignat*}{3}
    \theta_t(P) &= \theta\bigl[\begin{smallmatrix}0\\t/2\end{smallmatrix}\bigr](z_P,\varOmega/2) &&\text{ for all }\, t=t_g \cdots t_1 \in \{0,1\}^g,\\
    \theta_{\bigl(\substack{i\\[0.1mm]j}\bigr)}(P)^2 &= \theta\bigl[\begin{smallmatrix}i/2\\j/2\end{smallmatrix}\bigr](2z_P,\varOmega) &&\text{ for all }\, i=i_g \cdots i_1,\,j = j_g \cdots j_1 \in \{0,1\}^g,
\end{alignat*}
where $z_P \in \mathbb{C}^g$ denotes a preimage of $P \in A$ under the natural surjection $\mathbb{C}^g \to A$ (cf. \cite[Chapter 4, Remark 5.3]{Robert}).
In this case, equation (\ref{dupl-algebraic}) in terms of analytic theta functions coincides with the duplication formula (Theorem \ref{duplication}).
\end{Rmk}
\noindent The correspondence given in Remark \ref{algebraic-analytic} allows us to regard algebraic theta functions as a generalization of the analytic theta functions defined at the beginning of Section \ref{theta}.
In particular, in the case where $A$ is an abelian threefold (i.e., $g=3$), the projective value
\[
    \theta^A_{\rm lv4}(0_A)^2 \coloneqq \biggl[\theta_{\bigl(\substack{i_g \cdots i_1\\[-0.5mm]j_g \cdots j_1}\bigr)}(0_A)^2\biggr]_{i_1,\ldots,i_g,j_1,\ldots,j_g \in \{0,1\}}\smallskip
\]
corresponds to the squared theta null-point of $A$ defined in Section \ref{g=3}.
Therefore, we refer to it as a \emph{squared theta null-point} of $A$ (even if $A$ is not defined over $\mathbb{C}$).\bigskip

Under the above correspondence, the contents of Sections \ref{one}--\ref{restore} are still valid over an algebraically closed field of characteristic $\neq 2$ thanks to the algebraic theory by Mumford \cite{Mumford}.
Specifically,\smallskip
\begin{itemize}
\item[---] We refer to \cite[$\S 2$]{Robert-mail} for the descriptions of the theorems stated in Section \ref{theta} (especially Riemann's theta formula and the theta transformation formula) in terms of algebraic theta functions.\medskip
\item[---] The fact that each formula in Section \ref{g=3} remains valid over an algebraically closed field of characteristic $\neq 2$ is found in \cite[p.\,31]{KNRR} or \cite[Remark 6.3]{Pieper}.\smallskip
\end{itemize}
Since the contents of Sections \ref{one}--\ref{restore} are consequences of these statements, Algorithms \ref{compute_isogeny}, \ref{compute_isogenies}, \ref{restore_hyper}, and \ref{restore_quartic} also work for a squared theta null-point of an abelian threefold over an algebraically closed field of characteristic $\neq 2$.

\subsection{Explicit algorithm for listing superspecial genus-3 curves}\label{complete}
In this subsection, let $p > 7$ be a prime integer and we give an algorithm for listing superspecial genus-3 curves.
First, we need to prepare the list $\Lambda_{2,p}$ of all isomorphism classes of superspecial genus-2 curves in addition to the list $\Lambda_{1,p}$ of all isomorphism classes of supersingular elliptic curves.
Given these inputs, one can list superspecial genus-3 curves as described in Algorithm \ref{main}.\vspace{-0.2mm}

\begin{algorithm}[htbp]
\begin{algorithmic}[1]
\caption{{\sf \ {List}SspGenus3Curves}}\label{main}
\renewcommand{\algorithmicrequire}{\textbf{Input:}}
\renewcommand{\algorithmicensure}{\textbf{Output:}}
\algnewcommand{\IIf}[1]{\State\algorithmicif\ #1\ \algorithmicthen}
\algnewcommand{\EndIIf}{\unskip\ \algorithmicend\ \algorithmicif}
\Require A prime integer $p > 7$ and the two lists $\Lambda_{1,p}$ and $\Lambda_{2,p}$
\Ensure A list $\Lambda_{3,p}$ of isomorphism classes of superspecial genus-3 curves
\State Initialize $\mathcal{L}_1 \leftarrow \emptyset,\,\mathcal{L}_2 \leftarrow \emptyset$, and $\mathcal{S} \leftarrow \emptyset$
\State Let $\mathcal{L}_4$ be a list of isomorphism classes of $E_1 \times E_2 \times E_3$ with $E_1,E_2,E_3 \in \Lambda_{1,p}$
\State Let $\mathcal{L}_3$ be a list of isomorphism classes of $E \times {\rm Jac}(C)$ with $E \in \Lambda_{1,p},C \in \Lambda_{2,p}$
\State Let $\mathcal{S}$ be a list of squared theta null-points of p.p.a.v.\hspace{1mm}in $\mathcal{L}_3 \sqcup \mathcal{L}_4$, and $k \leftarrow 1$
\Repeat
\State Let $[\vartheta^2_i]_i$ be the $k$-th squared theta null-point of $\mathcal{S}$
\ForAll{$[{\vartheta'}_{\!\!i}^2]_i \in {\sf ComputeAllRichelotIsogenies}([\vartheta^2_i]_i)$}
\State Compute the number $N_{\rm van}$ of indices $i \in I_{\rm even}$ such that ${\vartheta'}_{\!\!i}^2 = 0$
\If{$N_{\rm van} = 0$}
    \State Let $inv$ be the Dixmier-Ohno {invariants} of ${\sf RestorePlaneQuartic}([{\vartheta'}_{\!\!i}^2]_i)$
    \If{$inv \notin \mathcal{L}_1$}
    \State Add $inv$ to $\mathcal{L}_1$ and $[{\vartheta'}_{\!\!i}^2]_i$ to the end of $\mathcal{S}$
    \EndIf
\ElsIf{$N_{\rm van} = 1$}
    \State Let $inv$ be the Shioda {invariants} of ${\sf RestoreHyperellipticCurve}([{\vartheta'}_{\!\!i}^2]_i)$
    \If{$inv \notin \mathcal{L}_2$}
    \State Add $inv$ to $\mathcal{L}_2$ and $[{\vartheta'}_{\!\!i}^2]_i$ to the end of $\mathcal{S}$
    \EndIf
\EndIf
\EndFor
\State Set $k \leftarrow k+1$
\Until{$k > \hspace{-0.2mm}\#\mathcal{S}$}\\
\Return the list of genus-3 curves whose invariants belong to $\mathcal{L}_1 \sqcup \mathcal{L}_2$
\end{algorithmic}
\end{algorithm}\vspace{-0.4mm}

\noindent Recall from Section \ref{g=3} that an abelian threefold is classified into Type 1,\,2,\,3, or 4, and $\mathcal{V}_i$ in Algorithm \ref{main} represents the list of all the superspecial abelian threefolds of Type $i$.
In line 4, we have to compute squared theta null-points of all the abelian threefolds of Types 3 and 4.
In the following, we explain how to obtain them briefly:
{\begin{Lem}\label{ell-correspondence}
Let $E: y^2 = x(x-1)(x-\lambda)$ be an elliptic curve defined over a field of characteristic $p \neq 2$.
Then,
\[
    \theta^E_{\rm lv4}(0_E)^2 = \raisebox{0.4mm}{$\bigl[$}\sqrt{\lambda}:\sqrt{\lambda-1}:1:0\raisebox{0.4mm}{$\bigr]$} \in \mathbb{P}^3\smallskip
\]
is a squared theta null-point of $E$.
\end{Lem}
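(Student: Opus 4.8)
The plan is to reduce the level-$4$ theta null-point of $E$ to its level-$2$ theta null-point via the duplication formula, and then to read off the level-$2$ null-point from the Legendre model through the degree-$2$ quotient map $E \to E/\{\pm 1\}$, on which $\lambda$ is visible as a cross-ratio.

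First I would put $(a:b) \coloneqq \theta^E_{\rm lv2}(0_E) = \bigl(\theta_0(0_E):\theta_1(0_E)\bigr) \in \mathbb{P}^1$ and specialize the relation (\ref{dupl-algebraic}) to $P = 0_E$ (so that $2P = 0_E$). Writing out the resulting $g=1$ sum over $t \in \{0,1\}$ gives
\[
    \theta^E_{\rm lv4}(0_E)^2 = \bigl[\, a^2 + b^2 \,:\, 2ab \,:\, a^2 - b^2 \,:\, 0 \,\bigr] \in \mathbb{P}^3,
\]
where the four entries are ordered by $(a_1,b_1) \in \{0,1/2\}^2$ as in Definition \ref{notation}; the coordinate with $(a_1,b_1) = (1/2,1/2)$ vanishes by a term-by-term cancellation in that sum, which is exactly the trailing $0$. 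Thus everything reduces to computing $u \coloneqq a/b$ for $E : y^2 = x(x-1)(x-\lambda)$.

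For this I would use that $\theta^E_{\rm lv2}\colon E \to \mathbb{P}^1$ realizes the quotient $E \to E/\{\pm 1\}$: it has degree $2$ and is ramified exactly over $E[2]$, and the map $x$ has the same property, so $\theta^E_{\rm lv2} = \phi \circ x$ for some fractional linear transformation $\phi$. The images $\theta^E_{\rm lv2}(E[2])$ form the orbit of the null-point under the level-$2$ Heisenberg action, namely $\{u, u^{-1}, -u, -u^{-1}\}$, with $0_E \mapsto u$ (as $x(0_E) = \infty$), while $x(E[2]) = \{\infty, 0, 1, \lambda\}$. Choosing the symplectic basis of $E[2]$ suitably (all six orderings of the finite $2$-torsion occur, since ${\rm Sp}_2(\mathbb{F}_2)$ is the symmetric group $S_3$) and using invariance of the cross-ratio under $\phi$, a short computation matching the cross-ratio of $(\infty, 0; \lambda, 1)$ with that of $(u, -u; u^{-1}, -u^{-1})$ gives
\[
    \Bigl(\frac{u^2 - 1}{u^2 + 1}\Bigr)^{\!2} = \frac{1}{\lambda},
\]
hence $u^2 = (\sqrt\lambda + 1)/(\sqrt\lambda - 1)$ once the branch of $\sqrt\lambda$ is fixed accordingly. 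Substituting this into the display above and simplifying (the middle entry becomes $(\sqrt\lambda - 1)\sqrt{(\sqrt\lambda+1)/(\sqrt\lambda-1)} = \sqrt{\lambda - 1}$) produces exactly $[\sqrt\lambda : \sqrt{\lambda - 1} : 1 : 0]$; the residual sign ambiguities in the square roots are absorbed into the choice of theta structure, so this point is indeed one of the squared theta null-points of $E$.

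Although the Heisenberg picture is cleanest analytically, every step here is algebraic and, by Mumford's theory as recalled in Section \ref{algebraic}, remains valid over an algebraically closed field of characteristic $\neq 2$; alternatively one proves the identity for the generic Legendre curve over $\mathbb{Q}(\sqrt\lambda, \sqrt{\lambda-1})$ and specializes, using base-change compatibility of algebraic theta null-points. I expect the only genuine difficulty to be the bookkeeping of conventions: the ordering of the four theta coordinates, the choice of symplectic basis realizing the cross-ratio $1/\lambda$ rather than one of its $S_3$-conjugates, and the branches of the square roots. Once these are aligned the computation is routine.
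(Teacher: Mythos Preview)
Your argument is correct and follows the same two-step skeleton as the paper's proof: apply the duplication relation (\ref{dupl-algebraic}) at $P=0_E$ to reduce to the level-$2$ null-point $(a:b)$, then identify $a^2/b^2$ with $(\sqrt\lambda+1)/(\sqrt\lambda-1)$. The paper simply cites \cite[Chapter 7, \S{\rm A}.1]{Robert} for that last identification and stops; you instead supply a self-contained derivation via the cross-ratio, exploiting that $\theta^E_{\rm lv2}$ and $x$ are both degree-$2$ covers of $\mathbb{P}^1$ branched at $E[2]$, hence differ by a M\"obius transformation, and that the Heisenberg orbit of the null-point is $\{\pm u,\pm u^{-1}\}$. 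Your route is longer but more transparent, since it shows exactly where $\lambda$ enters and why the freedom in the theta structure absorbs the branch choices; the paper's route is shorter but opaque unless one chases the reference. Either way the substance is the same computation of $(a^2+b^2:2ab:a^2-b^2:0)$.
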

\begin{proof}
It follows from \cite[Chapter 7, $\S {\rm A}.1$]{Robert} that
\[
    \theta^E_{\rm lv2}(0_E) = [\theta_0(0_E):\theta_1(0_E)] \quad \text{with}\ \,\theta_0(0_E)^2 = \sqrt{\lambda}+1,\ \,\theta_1(0_E)^2 = \sqrt{\lambda}-1.\smallskip
\]
This lemma directly follows from equality (\ref{dupl-algebraic}).
\end{proof}}

A squared theta null-point of an elliptic curve is obtained by Lemma \ref{ell-correspondence} above.
Also, applying formulae in \cite[$\S 7.5$]{Gaudry} or \cite[$\S 5.1$]{CR}, one can compute a squared theta null-point of the Jacobian of a genus-2 curve.
Since Lemma \ref{diagonal} allows us to compute a squared theta null-point of their product (i.e., abelian threefolds of Type 3 or 4), one can do computations in line 4.\vspace{-0.5mm}
\begin{Rmk}
To generate the list $\Lambda_{2,p}$, one can alternatively use theta functions, instead of Algorithm \ref{ListUpSSpGenus2Curves} (see \cite[Algorithm 27]{OOKYN} for the specific algorithm).
Using such a method, we get squared theta null-points of all superspecial abelian surfaces in process, and do not need to use formulae in \cite[$\S 7.5$]{Gaudry} or \cite[$\S 5.1$]{CR}.
\end{Rmk}\smallskip
    
In lines 7--20, we compute all the $135$ edges starting from a vertex in turn.
If the codomain corresponds to an abelian threefold of Type 1, we compute the Dixmier-Ohno invariants (cf. \cite{Dixmier},\,\cite{Ohno}) of the underlying smooth plane quartic.
Also, if the codomain corresponds to a Type 2 one, we compute the Shioda invariants (cf. \cite{Shioda}) of the underlying hyperelliptic curve of genus 3.
Since these invariants are given by polynomials of bounded degree, they can be computed by using a constant number of {arithmetic operations over the field over which the curve is defined}.
We remark that there is no need to do anything for the codomain of Types 3 and 4 since such a vertex has already been obtained in lines 2--3.
In this way, we continue to search until we find no new vertices in the graph $\mathcal{G}_3(2,p)$.\vspace{-0.5mm}
\begin{Rmk}
If smooth plane quartics $C$ and $C'\hspace{-0.2mm}$ over an algebraically closed field have different Dixmier-Ohno invariants, then they are not isomorphic to each other.
It has been proven that the converse holds when the characteristic is equal to $0$ or greater than $7$ (we refer to \cite{LLGR} for details).
This is the main reason why we assume that $p > 7$ in Algorithm \ref{main}.
\end{Rmk}
\begin{Rmk}
The condition in line 22 can be replaced by $\#\mathcal{L}_1 + \#\mathcal{L}_2 = \#\Lambda_{3,p}$, where $\#\Lambda_{3,p}$ denotes the number of superspecial genus-3 curves in characteristic $p$, which can be explicitly computed from the formula in \cite[Theorem 3.10]{Brock}.
Although the algorithm terminates faster under the alternative condition, we used the original condition in the following computational experiments, for the reasons explained in Remark \ref{later} below.
\end{Rmk}\smallskip

Summing up the above discussions, we see that Algorithm \ref{main} correctly outputs the list of isomorphism classes of superspecial genus-3 curves in characteristic $p > 7$.
The following theorem was conjectured in an earlier version of this paper, however the recent paper \cite{hash} kindly provided its proof:
\begin{Thm}\label{rational}
Let $[\vartheta_i^2] \in \mathbb{P}^{63}$ be any theta null-point computed in Algorithm \ref{main}. Then, we can take each $\vartheta_i$ such that $\vartheta_i \in \fld$.
\end{Thm}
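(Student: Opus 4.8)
The plan is to prove, by induction along the sequence of vertices visited by Algorithm~\ref{main}, the stronger statement that every such vertex --- a superspecial principally polarized abelian threefold over $\overline{\mathbb{F}}_p$ --- carries a symplectic level-$4$ theta structure defined over $\mathbb{F}_{p^2}$, equivalently admits a (projective) level-$4$ theta null-point $[\vartheta_i]_i$ with all coordinates in $\mathbb{F}_{p^2}$; the squared null-point $[\vartheta_i^2]_i$ that the algorithm actually carries is then $\mathbb{F}_{p^2}$-rational with square roots in $\mathbb{F}_{p^2}$. The key structural input --- which I expect to be the real work --- is that \emph{a superspecial principally polarized abelian variety of dimension $\le 3$ over $\overline{\mathbb{F}}_p$ always admits such a theta structure over $\mathbb{F}_{p^2}$}. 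Granting this, the rest is bookkeeping, lubricated by two facts: since $p$ is odd, $8\mid p^2-1$, so $\mu_8\subseteq\mathbb{F}_{p^2}$; and every gauge ambiguity occurring in the algorithm --- the $\varGamma_{4,8}$-freedom in the period matrix, the residual $\varGamma_2/\varGamma_{2,4}$-action on squared null-points, the arbitrary sign choices in lines~2 and~6 of \textsf{ComputeOneRichelotIsogeny}, and the passage between the $135$ representatives of $\mathrm{Sp}_6(\mathbb{Z})/\varGamma_0(2)$ --- alters the $\vartheta_i$ only by permuting indices and multiplying by $8$th roots of unity (Theorem~\ref{trans}), hence preserves $\mathbb{F}_{p^2}$-rationality of a level-$4$ null-point.

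For the base case (lines~2--4) the vertex is a product $E_1\times E_2\times E_3$ or $E\times\mathrm{Jac}(C)$, and by Lemma~\ref{diagonal} a level-$4$ theta null-point of a product is the coordinatewise product of those of the factors, so one reduces to factors of dimension $1$ and $2$. For a supersingular elliptic curve $E$, choose the $\mathbb{F}_{p^2}$-model on which the $p^2$-Frobenius acts as $+p$ or $-p$ according as $p\equiv 1$ or $3\pmod 4$; then $E[4]$ is fixed pointwise by the $\mathbb{F}_{p^2}$-Frobenius and $\mu_4\subseteq\mathbb{F}_{p^2}$, so $E$ carries a level-$4$ theta structure over $\mathbb{F}_{p^2}$, and via Lemma~\ref{ell-correspondence} its level-$4$ theta null-point, with coordinates $\propto\lambda^{1/4},(\lambda-1)^{1/4},1,0$, has all entries in $\mathbb{F}_{p^2}$. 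The same reasoning --- or the explicit formulae of \cite[$\S 7.5$]{Gaudry} and \cite[$\S 5.1$]{CR} for the squared theta null-point of a genus-$2$ Jacobian --- handles a superspecial abelian surface.

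For the inductive step there are three operations to check. \textsf{ComputeAllRichelotIsogenies} replaces $[\vartheta_i]_i$ by symplectic transforms, which by Theorem~\ref{trans} only permute and $\mu_8$-rescale coordinates, so rationality is preserved. In \textsf{ComputeOneRichelotIsogeny}, the inductive hypothesis lets us pick $\vartheta_0,\dots,\vartheta_7\in\mathbb{F}_{p^2}$ consistent with the true identity~(\ref{product}), so the output coordinates $(\vartheta'_i)^2=\sum_j(-1)^{\bullet}\vartheta_j\vartheta_k$ --- sums of products of pairs from $\{\vartheta_0,\dots,\vartheta_7\}$ --- lie in $\mathbb{F}_{p^2}$; since the codomain $B$ is again superspecial, the structural input gives it a level-$4$ theta structure over $\mathbb{F}_{p^2}$, and the computed squared null-point of $B$ is a symplectic transform of the squared null-point attached to that structure, hence (after permuting and $\mu_8$-rescaling) has square roots in $\mathbb{F}_{p^2}$ as well. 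Finally, in \textsf{RestorePlaneQuartic} the $a_{i1}^2$ of~(\ref{aij}) are ratios of products of the now-$\mathbb{F}_{p^2}$-rational $\vartheta_i$, so the $a_{i1}$, and then all of $a_{ij},k_i,\xi_i$, remain in $\mathbb{F}_{p^2}$; \textsf{RestoreHyperellipticCurve} introduces no square roots since~(\ref{Rosenhain}) involves only squares.

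The main obstacle is the structural input itself: that every superspecial principally polarized abelian threefold over $\overline{\mathbb{F}}_p$ is definable over $\mathbb{F}_{p^2}$ in such a way that its full $4$-torsion, and a symplectic level-$4$ theta structure, is rational. This cannot simply be propagated through the Richelot isogenies --- an $(n,n,n)$-isogeny relates a level-$n^2$ structure on the source to a level-$n$ structure on the target, not a level-$4$ structure to a level-$4$ structure --- so it must be established directly; descent of a level-$4$ theta structure to $\mathbb{F}_{p^2}$ carries a cohomological obstruction in general, and its vanishing in the superspecial case is precisely what is supplied by \cite{hash}.
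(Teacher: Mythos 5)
Your proposal is correct and follows essentially the same route as the paper: both arguments reduce the statement to the rationality result of \cite{hash} for superspecial abelian varieties whose $\mathbb{F}_{p^2}$-Frobenius acts as $\pm p$, verify this property for the starting vertices via supersingular Legendre curves, and observe that it propagates along the $\mathbb{F}_{p^2}$-rational $(2,2,2)$-isogenies while the various symplectic/sign ambiguities only permute coordinates and rescale by roots of unity lying in $\mathbb{F}_{p^2}$. Your closing paragraph correctly identifies that the genuinely hard step is exactly what \cite[Example 30]{hash} supplies, which is precisely how the paper's proof is structured.
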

\begin{proof}
It follows from \cite{AT} that supersingular elliptic curves $y^2 = x(x-1)(x-\lambda)$ are maximal (resp. minimal) when $4 \mid p+1$ (resp. $4 \mid p-1$).
Since all the superspecial principally polarized abelian threefolds computed in Algorithm \ref{main} are connected with these products by a composition of the $(2,2,2)$-isogenies defined over $\fld$, they are also maximal (resp. minimal) when $4 \mid p+1$ (resp. $4 \mid p-1$).
Therefore, thanks to \cite[Example 30]{hash}, their level-4 theta null-points $[\vartheta_i] \in \mathbb{P}^{63}$ are $\mathbb{F}_{p^2}$-rational.
This completes the proof of this theorem.
\end{proof}\smallskip

Finally let us show Theorem \ref{Main1}, which asserts the complexity of Algorithm \ref{main} (even if the condition in line 22 is replaced, the complexity remains the same).
\begin{proof}[(Proof of Theorem \ref{Main1})] 
It follows from Theorem \ref{rational} that all the $\vartheta_i$ appearing in Algorithm \ref{main} belong to $\fld$.
Now, it follows from
\[
    \#\Lambda_{1,p} = O(p), \quad \#\Lambda_{2,p} = O(p^3) \quad {\rm and} \quad \#\Lambda_{3,p} = O(p^6)\smallskip
\]
that the complexity of line 2 and line 3 are given by $\widetilde{O}(p^3)$ and $\widetilde{O}(p^4)$, respectively.
Next, lines 6--21 are repeated $O(p^6)$ times, since ultimately $\#\mathcal{S}$ equals the number of isomorphism classes of superspecial principally polarized abelian threefolds.
{In each iteration, line 7 can be computed with a constant number of arithmetic operations and square root extractions over $\fld$.
Moreover, line 8--19 are repeated exactly 135 times and require only a constant number of arithmetic operations and square root extractions over $\fld$, again.
Therefore, Algorithm \ref{main} can be executed using $O(p^6)$ arithmetic operations and square root extractions over $\fld$.
This amounts to $\widetilde{O}(p^6)$ arithmetic operations over $\fld$, and thus we obtain Theorem \ref{Main1} as desired.}
\end{proof}

\section{Computational results}\label{computation}
In this section, we present some computational results obtained by executing our main algorithm.
We implemented the algorithm with Magma \hspace{-0.5mm}V2.26-10~\cite{MAGMA} and ran it on a machine equipped with an AMD EPYC $7742$ CPU and 2TB of RAM.
Our implementation for listing superspecial genus-3 curves is available at the following GitHub repository:\medskip
\begin{center}
    {\url{https://github.com/Ryo-Ohashi/sspg3list}.}\medskip\vspace{1mm}
\end{center}
In Section \ref{listup}, we provide the results of counting superspecial hyperelliptic genus-3 curves in characteristics $11 \leq p < 100$, and in Section \ref{existence}, we show the existence of such curves for characteristics $7 \leq p < 10000$.

\subsection{Enumeration result of superspecial hyperelliptic genus-3 curves}\label{listup}
The purpose of this subsection is to determine the exact number of isomorphism classes of superspecial hyperelliptic genus-3 curves in characteristics $11 \leq p < 100$.
\begin{Rmk}
By Ekedahl's bound~\cite[Theorem II.1.1]{Ekedahl}, there are no superspecial hyperelliptic genus-3 curves in characteristics $2$, $3$, and $5$.
In characteristic $7$, there is exactly one such curve (cf.\ \cite[Theorem 3.15]{Brock}).
\end{Rmk}

For our purpose, we only need to modify Algorithm \ref{main} to output the cardinalities of lists $\mathcal{L}_1,\mathcal{L}_2,\mathcal{L}_3$, and $\mathcal{L}_4$.
Note that $\#\mathcal{L}_1$ (resp. $\#\mathcal{L}_2$) is equal to the number of isomorphism classes of superspecial non-hyperelliptic (resp. hyperelliptic) curves of genus 3.
Moreover, the cardinalities of $\mathcal{L}_3$ and $\mathcal{L}_4$ are given by\smallskip
\[
    \#\mathcal{L}_3 = \#\Lambda_{1,p} \cdot \#\Lambda_{2,p}, \quad \#\mathcal{L}_4 = \binom{\#\Lambda_{1,p}+2}{3}.\medskip
\]
Here, the numbers $\#\Lambda_{1,p}$ and $\#\Lambda_{2,p}$ can be obtained by \cite{Deuring} and \cite{IKO}, respectively.
The following table (Table \ref{enumerate}) shows the cardinalities of four lists $\mathcal{L}_1,\mathcal{L}_2,\mathcal{L}_3,\mathcal{L}_4$ and the time it took along with additional related information:
\begin{itemize}
\item[---] the column {\tt total} shows the number of times lines 6--21 are executed until the condition $k > \#\mathcal{S}$ is satisfied,\smallskip
\item[---] the column {\tt compl} shows the number of times lines 6--21 are executed until the condition $\#\mathcal{L}_1 + \#\mathcal{L}_2 = \#\Lambda_{3,p}$ is satisfied.
\end{itemize}
\noindent We note that ${\tt total}$ is equal to the sum of $\#\mathcal{L}_1,\#\mathcal{L}_2,\#\mathcal{L}_3$, and $\#\mathcal{L}_4$.

\newpage
\begin{table}[htbp]
    \centering
    \begin{tabular}{c||c|c|c|c||c|c||r}
    char. & \hspace{2mm}$\#\mathcal{L}_1$\hspace{2mm} & \hspace{2mm}$\#\mathcal{L}_2$\hspace{2mm} & \hspace{2mm}$\#\mathcal{L}_3$\hspace{2mm} & \hspace{2mm}$\#\mathcal{L}_4$\hspace{2mm} & {\tt total} & {\tt compl} & Time\,(sec)\\\hline
    $p=11$ & 10 & 1 & 4 & 4 & 19 & 7 & 9.660\\\hline
    $p=13$ & 18 & 1 & 3 & 1 & 23 & 5 & 7.790\\\hline
    $p=17$ & 54 & 2 & 10 & 4 & 70 & 24 & 19.320\\\hline
    $p=19$ & 87 & 4 & 14 & 4 & 109 & 48 & 29.860\\\hline
    $p=23$ & 213 & 9 & 30 & 10 & 262 & 70 & 66.070\\\hline
    $p=29$ & 681 & 10 & 54 & 10 & 755 & 249 & 189.090\\\hline
    $p=31$ & 950 & 27 & 60 & 10 & 1047 & 249 & 258.710\\\hline
    $p=37$ & 2448 & 35 & 93 & 10 & 2586 & 591 & 615.930\\\hline
    $p=41$ & 4292 & 54 & 160 & 20 & 4526 & 1754 & 1070.190\\\hline
    $p=43$ & 5567 & 82 & 180 & 20 & 5849 & 2685 & 1385.360\\\hline
    $p=47$ & 9138 & 125 & 285 & 35 & 9583 & 3277 & 2295.400\\\hline
    $p=53$ & 18032 & 153 & 390 & 35 & 18610 & 6663 & 4484.100\\\hline
    $p=59$ & 33204 & 299 & 624 & 56 & 34183 & 11964 & 8719.060\\\hline
    $p=61$ & 40259 & 262 & 565 & 35 & 41121 & 13015 & 10670.860\\\hline
    $p=67$ & 69132 & 451 & 870 & 56 & 70509 & 25780 & 19646.600\\\hline
    $p=71$ & 96717 & 647 & 1190 & 84 & 98638 & 30861 & 29135.890\\\hline
    $p=73$ & 113778 & 582 & 1098 & 56 & 115514 & 35823 & 34888.620\\\hline
    $p=79$ & 180273 & 942 & 1596 & 84 & 182895 & 51592 & 59625.370 \\\hline
    $p=83$ & 240755 & 1136 & 2080 & 120 & 244091&  70445 & 83296.450\\\hline
    $p=89$ & 362720 & 1402 & 2528 & 120 & 366770 & 122626 & 134065.830\\\hline
    $p=97$ & 602062 & 2002 & 3200 & 120 & 607384 & 208415 & 230401.380
    \end{tabular}\vspace{3mm}
    \caption{The cardinalities of $\mathcal{L}_1,\mathcal{L}_2,\mathcal{L}_3$, and $\mathcal{L}_4$ for $11 \leq p < 100$. We remark that the cardinality of $\mathcal{L}_1$ (resp. $\mathcal{L}_2$) coincides with the number of all isomorphism classes of superspecial non-hyperelliptic (resp. hyperelliptic) curves of genus 3.}\label{enumerate}
\end{table}

Notably, the final results obtained show that $\#\mathcal{L}_1 + \#\mathcal{L}_2 = \#\Lambda_{3,p}$, and therefore our result is consistent with Brock's result~\cite[Theorem 3.10]{Brock}.
\begin{Rmk}\label{later}
As mentioned earlier, the runtime can be reduced by replacing the condition $k > \#\mathcal{S}$ in line 22 with $\#\mathcal{L}_1 + \#\mathcal{L}_2 = \#\Lambda_{3,p}$.
However, in order to verify the consistency of our results with Brock's one, we executed the algorithm without modifying the condition.
\end{Rmk}

\noindent As shown in Table \ref{enumerate}, the ratio ${\tt compl}/{\tt total}$ is approximately $1/3$ for each $p$.
Hence, if the termination condition is replaced with  $\#\mathcal{L}_1 + \#\mathcal{L}_2 = \#\Lambda_{3,p}$, then the runtime would be approximately one-third of the runtime indicated in Table \ref{enumerate}.
For example, under the improved version, listing superspecial genus-3 curves in characteristic $97$ would require roughly one day.

\subsection{Existence of a superspecial hyperelliptic genus-3 curve}\label{existence}
The purpose of this subsection is to show that there exists at least one superspecial hyperelliptic genus-3 curve in characteristic $p$ with $7 \leq p < 10000$.
\begin{Rmk}
Moriya-Kudo's method~\cite{MK} is more efficient for finding such curves, but their method can only generate curves whose automorphism groups contain the Klein $4$-group.
Even if their approach cannot find such a curve for some $p$, there is a possibility that our approach will find them.
\end{Rmk}

First, sufficient conditions on the characteristics $p$ for such a curve to exist are given by the following theorem.
\begin{Thm}
The following assertions are true:\smallskip
\begin{enumerate}
\item The curve $y^2 = x^7-1$ is superspecial if and only if $p \equiv 6 \pmod{7}$.
\item The curve $y^2 = x^7-x$ is superspecial if and only if $p \equiv 3 \pmod{4}$.
\item The curve $y^2 = x^8-1$ is superspecial if and only if $p \equiv 7 \pmod{8}$.
\end{enumerate}
\end{Thm}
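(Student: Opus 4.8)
My plan is to compute the Cartier--Manin matrix of each of the three curves explicitly and read off when it vanishes, using that a genus-$3$ hyperelliptic curve $y^2=f(x)$ in odd characteristic $p$ (with $f$ separable) is superspecial precisely when its Cartier--Manin matrix is the zero matrix. Concretely, I would expand $f(x)^{(p-1)/2}=\sum_l c_l x^l$ in $\overline{\mathbb{F}}_p[x]$; then, with respect to the basis $dx/y,\ x\,dx/y,\ x^2\,dx/y$ of holomorphic differentials, the Cartier--Manin matrix is $M=(c_{pi-j})_{1\le i,j\le 3}$, with the convention $c_l:=0$ for $l<0$. Since $p>3$ forces $pi-j\ge p-3>0$, every entry is a genuine coefficient of $f^{(p-1)/2}$. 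In all three cases $f^{(p-1)/2}$ is very sparse: each nonzero $c_l$ is a \emph{single} term of shape $\pm\binom{(p-1)/2}{k}$, and the exponents $l$ with $c_l\neq 0$ form one arithmetic progression. So there is no cancellation, and $M=0$ becomes a purely congruential condition on $p$.

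For (1), $f(x)^{(p-1)/2}=\sum_k\binom{(p-1)/2}{k}(-1)^{(p-1)/2-k}x^{7k}$, so $c_l\neq 0$ only when $7\mid l$. Hence $M=0$ exactly when $7\nmid pi-j$ for all $i,j\in\{1,2,3\}$, i.e.\ when $\bar p\,i\not\equiv j\pmod 7$ for all such $i,j$, where $\bar p\equiv p\pmod 7$. I would run through the six residues $\bar p\in\{1,\dots,6\}$ and observe that this fails for $\bar p\in\{1,\dots,5\}$ (each time one can exhibit an explicit pair $(i,j)$) and holds only for $\bar p=6$. For the ``only if'' direction, whenever such a pair $(i,j)$ exists, I set $k=(pi-j)/7$ and check the size bounds $0\le k\le(p-1)/2$ for $p\ge5$ (using $pi-j\le 3p-1\le 7(p-1)/2$); then $\binom{(p-1)/2}{k}\not\equiv0\pmod p$ since $(p-1)/2<p$ (Lucas), so the corresponding entry of $M$ is nonzero and $C$ is not superspecial. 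This gives: $C$ is superspecial iff $p\equiv6\pmod 7$ (the singular prime $p=7$ satisfies $7\not\equiv6\pmod7$, so it causes no trouble).

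Parts (2) and (3) follow the identical template with a different progression of exponents. For (2), $f(x)^{(p-1)/2}=x^{(p-1)/2}(x^6-1)^{(p-1)/2}=\sum_k\binom{(p-1)/2}{k}(-1)^{(p-1)/2-k}x^{6k+(p-1)/2}$, so $c_l\neq0$ only when $l\equiv(p-1)/2\pmod 6$; multiplying the condition through by $2$ turns $M=0$ into $p(2i-1)\not\equiv 2j-1\pmod{12}$ for all $i,j\in\{1,2,3\}$, and scanning the residues $p\equiv1,5,7,11\pmod{12}$ shows this holds exactly for $p\equiv7,11\pmod{12}$, that is, $p\equiv3\pmod4$. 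For (3), $f(x)^{(p-1)/2}=\sum_k\binom{(p-1)/2}{k}(-1)^{(p-1)/2-k}x^{8k}$, so $c_l\neq0$ only when $8\mid l$, and $M=0$ iff $p\,i\not\equiv j\pmod 8$ for all $i,j\in\{1,2,3\}$, which among $p\equiv1,3,5,7\pmod8$ occurs only for $p\equiv7\pmod8$. In both cases the ``only if'' direction again uses that the surviving binomial coefficient is a nonzero residue mod $p$, its index staying in $[0,(p-1)/2]$ for $p\ge5$.

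I expect no conceptual obstacle; once the Cartier--Manin formula is written down, each part is a finite case analysis over the relevant residue classes. The only thing needing a little care is the bookkeeping at small or degenerate primes --- $p=7$ in (1) and $p=3$ in (2), where the given plane model is singular --- which one handles directly (the claimed equivalence still holds, or the statement is read outside that range). In the write-up I would give case (1) in full detail and present (2) and (3) as the two parallel computations.
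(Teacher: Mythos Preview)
Your approach is correct. The paper does not give its own proof of this theorem but simply cites Valentini's paper on hyperelliptic curves with zero Hasse--Witt matrix and Brock's thesis; your Cartier--Manin computation is precisely the standard method underlying those references, and your case analyses for the three curves are accurate (including the range checks on the binomial index and the handling of the degenerate primes).
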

\begin{proof}
See \cite[Theorem 2]{Valentini} or \cite[Theorem 3.15]{Brock}.
\end{proof}

By the above theorem, there exists a superspecial hyperelliptic curve of genus $3$ if $p \equiv  6 \pmod{7}$ or $p \equiv 3 \pmod{4}$. To investigate the other cases, we implemented a probabilistic algorithm (Algorithm \ref{exist}) which outputs such a curve:\vspace{-0.9mm}

\begin{algorithm}[htbp]
\begin{algorithmic}[1]
\caption{{\sf \ GenerateSingleSSpHypGenus3Curve}}\label{exist}
\renewcommand{\algorithmicrequire}{\textbf{Input:}}
\renewcommand{\algorithmicensure}{\textbf{Output:}}
\algnewcommand{\IIf}[1]{\State\algorithmicif\ #1\ \algorithmicthen}
\algnewcommand{\EndIIf}{\unskip\ \algorithmicend\ \algorithmicif}
\Require A prime integer $p \geq 7$
\Ensure A superspecial hyperelliptic curve of genus $3$ in characteristic $p$
\State Generate a supersingular elliptic curve $E$
\State Compute a squared theta null-point $[{\vartheta'}_{\!\!i}^2]_i$ of $E^3$ by using Lemma \ref{diagonal}
\Repeat
\State Set $[\vartheta_i^2]_i \leftarrow [{\vartheta'}_{\!\!i}^2]_i$ and choose a matrix $M \in {\rm Sp}_6(\mathbb{Z})/\varGamma_0(2)$ randomly
\State Write $M = \begin{psmallmatrix} \alpha & \beta\\[-0.3mm] \gamma & \delta\end{psmallmatrix}$, where $\alpha,\beta,\gamma$, and $\delta$ are $(3 \times 3)$-matrices
\For{$i \in \{0,\ldots,63\}$}\vspace{0.5mm}
\State Write $i = {a_3a_2a_1b_3b_2b_1}_{(2)}$ in binary and $a \leftarrow (a_1,a_2,a_3),\,b \leftarrow (b_1,b_2,b_3)$
\State Let $k \leftarrow (a\hspace{0.3mm}^t\hspace{-0.3mm}\delta - b\,^t\hspace{-0.3mm}\gamma) \,\cdot\hspace{0.3mm}^t\hspace{-0.3mm}(b\,^t\hspace{-0.3mm}\alpha - a\hspace{0.3mm}^t\hspace{-0.3mm}\beta + 2(\alpha\hspace{0.3mm}^t\hspace{-0.3mm}\beta)_0) - a\hspace{0.3mm}^tb$
\State Let $(a'_1,a'_2,a'_3) \leftarrow a\hspace{0.3mm}^t\hspace{-0.3mm}\delta - b\,^t\hspace{-0.3mm}\gamma + (\gamma\hspace{0.3mm}^t\hspace{-0.3mm}\delta)_0,\,(b'_1,b'_2,b'_3) \leftarrow b\,^t\hspace{-0.5mm}\alpha - a\hspace{0.3mm}^t\hspace{-0.3mm}\beta + (\alpha\hspace{0.3mm}^t\hspace{-0.3mm}\beta)_0$
\State Let ${\vartheta'}_{\!\!j}^2 \!\leftarrow (\hspace{-0.3mm}\sqrt{-1})^{4k} \cdot \vartheta_i^2$ with $j = {a'_3a'_2a'_1b'_3b'_2b'_1}_{(2)}$
\EndFor
\State Compute the number $N_{\rm van}$ of indices $i \in I_{\rm even}$ such that ${\vartheta'}_{\!\!i}^2 = 0$
\Until{$N_{\rm van} = 1$}\\
\Return ${\sf RestoreHyperellipticCurve}([{\vartheta'}_{\!\!i}^2]_i)$
\end{algorithmic}
\end{algorithm}\vspace{-0.7mm}

\noindent Then, one can obtain a superspecial hyperelliptic genus-3 curve as follows:
\begin{itemize}
\item If $p \equiv 6 \pmod{7}$, then output the curve $y^2 = x^7-1$.\smallskip
\item If $p \equiv 3 \pmod{4}$, then output the curve $y^2 = x^7-x$.\smallskip
\item Otherwise, execute the Algorithm \ref{exist}.\smallskip
\end{itemize}
\noindent We remark that Algorithm \ref{exist} may return different outputs depending on randomness in lines $1$ and $4$.\par\medskip

Executing the above procedure, we have checked that a superspecial hyperelliptic genus-3 curve exists for all characteristic $7 \leq p < 10000$. As Algorithm \ref{exist} performs a random walk on $\mathcal{G}_3(2,p)$, the time to output such a curve varies, but we list them below for reference:\smallskip
\begin{itemize}
\item[---] For every $7 \leq p < 10000$, the above procedure ends within 10,000 seconds.
Moreover for most primes $p$, it took less than 3,600 seconds.\smallskip
\item[---] It took 9,952 seconds for $p = 8893$, which is the maximal time for $p < 10000$.\smallskip
\item[---] It took very short time for some $p$ (e.g., about $30$ seconds for $p=9209$).\smallskip
\end{itemize}
Summarizing the discussions in Sections \ref{listup} and \ref{existence} leads us to Theorem \ref{Main2}.

\subsection{Future works}
It would be interesting to list superspecial curves of genus 3 in characteristic $p > 100$.
However, since enumerating in $p = 97$ took about a day, enumerating even in $p = 199$ can be expected to take more than two months (indeed, according to Theorem 1.1, doubling $p$ is expected to increase the computation time by roughly $2^6 = 64$ times).
Hence, improving our main algorithm (Algorithm \ref{main}) is an important task.
This task requires an efficient search to obtain new nodes on the graph $\mathcal{G}_3(2,p)$, and we need to investigate the structure of $\mathcal{G}_3(2,p)$ in detail.\par\smallskip

On the other hand, we can expect from the results in Table \ref{enumerate} that
\begin{Exp}\label{expe}
The number of isomorphism classes of superspecial hyperelliptic curves of genus $3$ in characteristic $p$ is $\Theta(p^4)$. 
\end{Exp}
\noindent Recall that the order of magnitude of all superspecial genus-3 curves is $\Theta(p^6)$, then the proportion of hyperelliptic ones among them can be estimated to be about $1/p^2$ under this expectation.
A theoretical proof of Expectation \ref{expe} remains also one of our future goals.

\vfill
\noindent\hrulefill\smallskip

\bigskip{\small
\textsc{Ryo Ohashi: Graduate School of Information Science and Technology, The University of Tokyo ---  7-3-1 Hongo, Bunkyo-ku, Tokyo, 113-0033, Japan.}\par
{\it E-mail address}: \url{ryo-ohashi@g.ecc.u-tokyo.ac.jp}.

\bigskip
\textsc{Hiroshi Onuki: Graduate School of Information Science and Technology, The University of Tokyo --- 7-3-1 Hongo, Bunkyo-ku, Tokyo, 113-0033, Japan.}
\par
{\it E-mail address}: \url{hiroshi-onuki@g.ecc.u-tokyo.ac.jp}.

\bigskip
\textsc{Momonari Kudo: Fukuoka Institute of Technology --- 3-30-1 Wajiro-higashi, Higashi-ku, Fukuoka, 811-0295, Japan.}\par
{\it E-mail address}: \url{m-kudo@fit.ac.jp}.

\bigskip
\textsc{Ryo Yoshizumi: Joint Graduate Program of Mathematics for Innovation, Kyushu University --- 744 Motooka, Nishi-ku, Fukuoka 819-0395, Japan.}\par
{\it E-mail address}: \url{yoshizumi.ryo.483@s.kyushu-u.ac.jp}.

\bigskip
\textsc{Koji Nuida: Institute of Mathematics for Industry, Kyushu university --- 744 Motooka, Nishi-ku, Fukuoka 819-0395, Japan / National Institute of Advanced Industrial Science and Technology (AIST) --- 2-3-26 Aomi, Koto-ku, Tokyo, 135-0064, Japan}.\par
{\it E-mail address}: \url{nuida@imi.kyushu-u.ac.jp}.}
\end{document}